\documentclass[11pt]{amsart}

\usepackage{amsmath,amscd,amssymb,amsthm,mathtools,enumerate,subcaption}
\usepackage{mwe}

\usepackage{xcolor}
\usepackage[pdfencoding=auto]{hyperref} 

\hypersetup{
     colorlinks,
     linkcolor= {blue!80!black},
     citecolor={blue!80!black},
     urlcolor={blue!80!black}}
     
\usepackage[nameinlink,capitalise]{cleveref}

\usepackage[a4paper,left=3cm,right=3cm,top=4.5cm,bottom=4.5cm]{geometry}

\usepackage{parskip}
\setlength{\parskip}{1.5mm plus.5mm minus.5mm}

\usepackage{tikz-cd}

\newcommand{\nwc}{\newcommand}


\nwc{\mf}{\mathbf} 
\nwc{\blds}{\boldsymbol} 
\nwc{\ml}{\mathcal} 


\newcommand{\mB}{\mathrm{B}} 
\newcommand{\C}{\mathbb{C}} 
\newcommand{\F}{\mathcal{F}} 
\newcommand{\gH}{\mathfrak{H}} 
\renewcommand{\H}{\mathfrak{H}} 
\newcommand{\N}{\mathbb{N}}
\newcommand{\R}{\mathbb{R}}
\newcommand{\Sp}{\mathbb{S}} 

\renewcommand{\epsilon}{\varepsilon}

\renewcommand{\d}[1]{\mathrm{d}#1}
\newcommand{\abs}[1]{\left\lvert #1 \right\rvert}
\newcommand{\norm}[1]{\left\lVert #1 \right\rVert}

\newcommand{\normm}[1]{{\left\vert\kern-0.25ex\left\vert\kern-0.25ex\left\vert #1 
    \right\vert\kern-0.25ex\right\vert\kern-0.25ex\right\vert}}

\newcommand{\cc}[1]{\overline{#1}}
\newcommand{\hp}[2]{\left\langle #1 ,#2\right\rangle}

\newcommand{\br}[1]{\left \langle #1 \right \rangle} 
\newcommand{\ol}[1]{\overline{#1}}

\newcommand{\p}{\partial}
\newcommand{\To}{\longrightarrow}

\nwc{\Spec}{\operatorname{Spec}}

\newcommand{\loc}{\mathrm{loc}}
\newcommand{\rad}{\mathrm{rad}}

\let\Re\relax
\DeclareMathOperator{\Re}{\mathrm{Re}}
\let\Im\relax
\DeclareMathOperator{\Im}{\mathrm{Im}}
\DeclareMathOperator{\Id}{\mathrm{Id}}

\DeclareMathOperator{\SO}{SO}

\DeclareMathOperator{\supp}{supp}

\newcommand{\ka}{\kappa}

\newcommand{\Tr}{\mathrm{Tr}}
\newcommand{\qs}{q^s_\ka}
\newcommand{\qb}{q^\mB_\ka}


\nwc{\IA}{\mathbb{A}} 
\nwc{\IB}{\mathbb{B}} 
\nwc{\IC}{\mathbb{C}} 
\nwc{\ID}{\mathbb{D}} 
\nwc{\IE}{\mathbb{E}} 
\nwc{\IF}{\mathbb{F}} 
\nwc{\IG}{\mathbb{G}} 
\nwc{\IH}{\mathbb{H}} 
\nwc{\IN}{\mathbb{N}} 
\nwc{\IP}{\mathbb{P}} 
\nwc{\IQ}{\mathbb{Q}} 
\nwc{\IR}{\mathbb{R}} 
\nwc{\IS}{\mathbb{S}} 
\nwc{\IT}{\mathbb{T}} 
\nwc{\IZ}{\mathbb{Z}} 


\nwc{\cA}{\ml{A}}
\nwc{\cB}{\ml{B}}
\nwc{\cC}{\ml{C}}
\nwc{\cD}{\ml{D}}
\nwc{\cE}{\ml{E}}
\nwc{\cF}{\ml{F}}
\nwc{\cG}{\ml{G}}
\nwc{\cH}{\ml{H}}
\nwc{\cI}{\ml{I}}
\nwc{\cJ}{\ml{J}}
\nwc{\cK}{\ml{K}}
\nwc{\cL}{\ml{L}}
\nwc{\cM}{\ml{M}}
\nwc{\cN}{\ml{N}}
\nwc{\cO}{\ml{O}}
\nwc{\cP}{\ml{P}}
\nwc{\cQ}{\ml{Q}}
\nwc{\cR}{\ml{R}}
\nwc{\cS}{\ml{S}}
\nwc{\cT}{\ml{T}}
\nwc{\cU}{\ml{U}}
\nwc{\cV}{\ml{V}}
\nwc{\cW}{\ml{W}}
\nwc{\cX}{\ml{X}}
\nwc{\cY}{\ml{Y}}
\nwc{\cZ}{\ml{Z}}



\newtheorem{main-theorem}{Theorem}
\newtheorem{proposition}{Proposition}[section]

\newtheorem{lemma}[proposition]{Lemma}
\newtheorem{theorem}[proposition]{Theorem}

\theoremstyle{remark}
\newtheorem{remark}[proposition]{Remark}

\numberwithin{equation}{section}


\title{The Born approximation for the fixed energy Calderón problem}

\author[F. Macià]{Fabricio Macià}
\address[FM]{M$^2$ASAI. Universidad Politécnica de Madrid, ETSI Navales, Avda. de la Memoria, 4, 28040, Madrid, Spain.}
\email{fabricio.macia@upm.es}

\author[C. Meroño]{Cristóbal Meroño}
\address[CM]{M$^2$ASAI. Universidad Politécnica de Madrid, ETSI Navales, Avda. de la Memoria, 4, 28040, Madrid, Spain.}
\email{cj.merono@upm.es}

\author[D. Sánchez-Mendoza]{Daniel Sánchez-Mendoza}
\address[DSM]{M$^2$ASAI. Universidad Politécnica de Madrid, ETSI Navales, Avda. de la Memoria, 4, 28040, Madrid, Spain.}
\email{daniel.sanchezmen@upm.es}

\begin{document}

\begin{abstract}
The Born approximation of a potential in the context of the Calderón inverse problem is an object that can be formally defined in terms of spectral data of the Dirichlet-to-Neumann map of the corresponding Schrödinger operator.
In this article, we prove, in the case of radial potentials in the Euclidean ball and any fixed energy, that the Born approximation is well-defined as a compactly supported radial distribution, and that the Calderón problem can be reformulated as recovering a potential from its Born approximation. In addition,  we show that the Born approximation depends locally on the potential and captures exactly its singularities, and that the functional that maps the Born approximation to the potential is Hölder continuous. We also prove that the Born approximation converges to the potential in the high-energy limit. Moreover, we give an explicit formula for the Fourier transform of the Born approximation at any fixed energy, and illustrate how it can be used as the basis of an accurate procedure to approximate a potential from its Dirichlet-to-Neumann map.

\end{abstract}

\maketitle

\section{Introduction}

\subsection{Outline of the article}
The Calderón problem for a Schrödinger operator, also known as the Gel'fand-Calderón problem, asks for the reconstruction of a potential from the knowledge of certain boundary measurements of the solutions to the corresponding Schrödinger equation; see \cite{calderon,calderon_rep} and \cite{Gelfand_54}.
In this article, we are interested in the fixed-energy version of this problem: given an open, bounded domain $\Omega\subset\IR^d$, $d\geq2$, with $\cC^1$ boundary, an energy $\ka\in\IC$ and a real-valued potential $q$, consider the elliptic boundary-value problem for the Helmholtz equation
\begin{equation} \label{eq:boundary_problem}
    \left\{
\begin{array}{rll}
-\Delta u(x) + q(x) u(x) -\ka u(x) &=0,    \quad   x\in\Omega,\\
u|_{\partial\Omega}  &= f.
\end{array}\right.
\end{equation}
The Dirichlet-to-Neumann (DtN) map associates the boundary value $f$ with the normal derivative on $\partial\Omega$ of the corresponding solution $u$ to \eqref{eq:boundary_problem}. Whenever $\ka$ is not a Dirichlet eigenvalue of $-\Delta+q$, this produces a well-defined linear operator
\begin{equation}\label{e:kqdtn}
    \begin{array}{cccc}
       \Lambda_{q,\kappa}:  & \cC^\infty(\partial\Omega) &\To&\cC^\infty(\partial\Omega) \\
         & f &\longmapsto & \partial_\nu u|_{\partial\Omega},
    \end{array}
\end{equation}
where $\nu$ is the vector field of exterior unit vectors normal to $\partial\Omega$.

The Gel'fand-Calderón problem, in its simplest form, consists in reconstructing $q$ from the knowledge of $\Lambda_{q,\kappa}$ for some fixed $\ka\in\IC$. Since it is known that $\Lambda_{q,\kappa}-\Lambda_{0,\kappa}$ is always an operator that is bounded in $L^2(\partial\Omega)$ (see for instance \Cref{sec:dtn}), it is convenient to encode the inverse problem using the nonlinear map
\begin{equation}\label{eq:defphik}
     \begin{array}{cccc}
       \Phi^\ka:  & \cX &\To& \cL^2(L^2(\partial\Omega)) \\
         & q &\longmapsto & \Lambda_{q,\kappa}-\Lambda_{0,\kappa},
    \end{array}    
\end{equation}
where $\cX$ is a class of admissible potentials for which the DtN map is defined. In this article $\cX$ will be a subset of $L^p(\Omega)$ for $p>1$ and $p \ge d/2$. The main issues one is interested in are:
\begin{enumerate}[i)]
    \item \textbf{Uniqueness. }Is every $q\in \cX$ uniquely determined by $\Lambda_{q,\ka}$? This amounts to showing that $\Phi^\ka$ is injective.
    \item \textbf{Stability. }Is the reconstruction process stable? That is, find a modulus of continuity for $\Phi^\ka$. This is not possible in general, since $(\Phi^\ka)^{-1}$ is not continuous (see, for example, \cite{Alessandrini_88,Ale_Cabib_08,Faraco_Kurylev_Ruiz_14}). 
    This is, therefore, an \textit{ill-posed} inverse problem. Nevertheless, one can ask for \textit{conditional stability} when additional requirements of regularity and boundedness are imposed on the class of admissible potentials.\footnote{Put in a more abstract setting, for a given \textit{compact} set $K\subset \cX$ one tries to compute the modulus of continuity of $(\Phi^\ka)^{-1}|_{\Phi^\ka(K)}$, which exists since $\Phi^\ka|_K$ is a uniformly continuous homeomorphism.}
    \item \textbf{Reconstruction. }Find an effective procedure to reconstruct $q$ from $\Lambda_{q,\ka}$, in other words, compute $(\Phi^\ka)^{-1}$. This is related to the problem of \textit{characterization} of the range $\Phi^\ka(\cX)$. 
\end{enumerate}

The uniqueness for $d \ge  3$ was proved in \cite{SU87}, and later in \cite{Na88,Novikov_1988} with particular emphasis on the case of fixed energy (see also \cite{CaroRogers16, Haberman_15_unbounded}). The two-dimensional case was solved in  ~\cite{Nachman_uniqueness_2d_96,Astala_Paivarinta_2006} for conductivities and \cite{Bukhgeim_2008,Blasten_2015} for potentials. These results are proved using the notion of Complex Geometric Optics solutions (CGO) from \cite{Fadd65} ($d\geq 3$) or different families of exponentially growing solutions of the equations when $d=2$.

Conditional stability was proved in dimensions $d\ge 3$ by \cite{Alessandrini_88} with a logarithmic-type modulus on continuity, which was shown to be optimal in \cite{Mand00} (see also \cite{KRS21} for a more detailed account of this issue). In dimension 2 it was proved in \cite{Barcelo_Barcelo_Ruiz_01, Clop_Faraco_Ruiz_10} (see also \cite{Far_Pr_18}). Improved stability at fixed energy was established in \cite{novikov_2011-stability,isakov_2011_stability, ILX20} in dimension three and \cite{santacesaria_2013_stability,santacesaria_2015_stability} in two dimensions. The instability estimates of \cite{Mand00} have been improved in the $\ka \neq 0$ case in \cite{Isa13, KUW21}.

Reconstruction is a difficult issue in general, both from analytical and numerical points of view. The classical approach to uniqueness, based on exponentially growing families of solutions, leads to reconstruction strategies that involve analyzing a certain scattering transform (see, for instance, \cite{Na88, Nachman_uniqueness_2d_96,Astala_Faraco_Rogers_2016}), that can be transformed into numerical algorithms (see, among many others, \cite{IMS00,KLMS2007,DHK12}). 
A different type of strategy, sometimes combined with the previous one, is based on linearization. This serves as the basis of one-step linearization methods (see \cite{HaSe2010}), the so-called Calderón method (see, for example, \cite{Bik_Mueller_08,  Mueller_second_order}), the algorithm described in \cite{Bikowski_Knudsen_Muller_11}, and has also been applied in deep learning approaches to the Calderón problem; see \cite{Martin2017}. 
This kind of linearization methods  are successful from the numerical point of view, but seem hard to justify rigorously. The main difficulty lies in proving the existence of a certain Born-type approximation for the inverse problem.

In this work, we address this question, showing the existence of a Born approximation for the inverse problem for radially symmetric potentials, and we analyze how this leads to interesting  uniqueness, stability, and characterization results. This approach does not use any CGO-type construction.

This approach was initiated  in \cite{BCMM_2022_Born, BCMM_2024_numerical} in the context of the Calderón problem, and is based on the notion of approximation in inverse scattering that can be traced back to the work of Born \cite{Born1926}. 
It was successfully applied in \cite{Radial_Born} to the Gel'fand-Calderón problem at zero energy; and a systematic exposition of this approach, which in principle applies to any inverse problem, was given in \cite{MaMe24}. In the present context, it can be described as follows. The map $\Phi^\ka$ is Fréchet differentiable and satisfies $\Phi^\ka(0)=0$; if for every $q\in \cX$ it is possible to find $\qb\in \cB$ in some space $\cB$ of functions or distributions solving 
\begin{equation}\label{eq:absba}
    \d\Phi^\ka_0(\qb) =  \Phi^\ka(q)= \Lambda_{q,\kappa}-\Lambda_{0,\kappa},
\end{equation}
then one has in fact obtained a factorization of $\Phi^\ka$ into a linear and a nonlinear map:  
\begin{equation}  \label{eq:diagram_1}
\begin{tikzcd}
\cX \arrow[dr,"\Phi^\ka_\mB"] \arrow[rr,"\Phi^{\ka}"] && \Phi^\ka(\cX) \\
 &  \cB \arrow[ur,"\d\Phi^\ka_0"]
\end{tikzcd}
\qquad \qquad
\begin{tikzcd}
q \; \ar[dr,|->,"\Phi_\mB^{\ka}"] \ar[rr,|->,"\Phi^{\ka}"] && \Lambda_{q,\ka}-\Lambda_{0,\ka} \\
 &  \qb \ar[ur,|->,"\d\Phi_0^{\ka}"]
\end{tikzcd}
\end{equation}
The objective is to exploit this factorization to obtain uniqueness, stability, reconstruction and characterization results for the inverse problem.
The main difficulty in implementing this strategy is to show that there exists a solution of equation \eqref{eq:absba}. In this work we show that \eqref{eq:absba} can be solved and that the factorization \eqref{eq:diagram_1} exists for the Gel'fand-Calderón problem at fixed energy $\ka\in\R$, assuming radial symmetry on the potentials. 

In the radial case, the existence of $\qb$ for $\ka=0$ has already been proved in \cite{Radial_Born}, a result that also implies a partial characterization of the set of DtN maps.  A key step in the proof involves the tools introduced by Simon \cite{Simon_spectral_I} in the context of inverse spectral theory of Schrödinger operators on the half-line, especially the notion of $A$-amplitude. In \cite{MaMe24}, the authors show that, in fact, the $A$-amplitude coincides with the notion of Born approximation for this one-dimensional inverse spectral problem.

Here, the proof of the existence of $\qb$ relies again on the analysis of a one-dimensional inverse spectral problem, the difficulty being that the corresponding Born approximation is no longer Simon's $A$-amplitude. The existence of such a Born approximation, which we call the $A_\ka$-amplitude, is an important part of this work. The main results of this article are:
\begin{enumerate}[i)]
    \item \textbf{Explicit description of }$\d\Phi^\ka_0$ \textbf{and its inverse. }We show that $\d\Phi^\ka_0$ maps potentials to operators that are rotationally invariant, and whose eigenvalues are certain moments of the potential. Also, we give an explicit formula for $(\d\Phi^\ka_0)^{-1}$. 
    This is presented in \Cref{main-thm:Fourier Radial} and \Cref{prop:frechet_der}.
    \item \textbf{Existence of the Born approximation. } This is stated in \Cref{main-thm:existence_Born}. 
    Using the explicit formula for $(\d\Phi^\ka_0)^{-1}$ and tools of spectral theory of Schrödinger operators on the half-line, most particularly the existence of an $A_\ka$-amplitude (\Cref{thm:A-amplitude}), we prove that $\qb$, the Born approximation at energy $\ka$, exists as a radial distribution. 
    We also present explicit formulas for $\qb$.
    \item \textbf{Regularity of $\qb$.} 
    In \Cref{main-thm:basic_properties} we prove that $\qb$ coincides with an integrable function outside the origin, that $\qb-q$ is one derivative smoother than $q$, and that $\qb =q$ at the boundary.
    \item \textbf{Injectivity of $\Phi_\mB^\ka$ and stability of $(\Phi_\mB^\ka)^{-1}$.}
    The map $\Phi_\mB^\ka: q \mapsto \qb$ is injective in a strong sense: two potentials coincide in a neighborhood of the boundary if and only if their Born approximations at energy $\ka$ coincide in that same neighborhood. In addition,  $(\Phi_\mB^\ka)^{-1}$ is Hölder continuous, under mild \textit{a priori} conditions on the potentials, with respect to the $L^1$ norm in the complement of any arbitrarily small ball centered in the origin. This is the scope of \Cref{main-thm:stability}, which is derived from its one-dimensional analogue \Cref{thm:stability_A_function}.
    \item \textbf{High-energy/semiclassical limit. }In \Cref{main-thm:limit} we show that the Born approximation at energy $\ka$ converges to the potential $q$ in the high-energy limit $\ka\to-\infty$. The analogous result for the direct problem, namely that $\Phi^\ka(q)-\d \Phi^\ka_0(q)$ converges to zero in the same regime, is proved in \Cref{prop:spectrum}.
\end{enumerate}
In \eqref{eq:diagram_1}, the maps $\d\Phi_0^{\ka}$ and $\Phi^\ka$ are continuous, but the ranges of each of them are not closed. Hence $(\d\Phi_0^\ka)^{-1}$ and $\Phi^\ka$ are both discontinuous and only conditionally stable. Therefore, an important consequence of iv)  is that  the bad behavior of the inverse of $\Phi^\ka$ is captured by the linear approximation $\d\Phi^\ka_0$, since the non-linear map $(\Phi_\mB^{\ka})^{-1}$ is Hölder stable.


\subsection{Statement of the main results}

From now on, we denote by $L^{p}_\rad(\IB^d,\R)$ the closed subspace of radial functions that belong to $L^p(\IB^d,\R)$. We will assume that $d\ge 2$ and that
\begin{equation}\label{eq:p_cond}
     p \in \R \text{ is admissible }\quad \iff \quad 1< p \le \infty ,\;  \text{and }\;  p \ge d/2.
\end{equation}
Also, we define
\begin{equation*}
    \quad \nu_d := \frac{d-2}{2},\qquad \IN_0:=\IN\cup\{0\}.  
\end{equation*}
Recall that the Dirichlet spectrum of $-\Delta$ on the ball $\IB^d$ can be expressed in terms of the zeros of certain Bessel functions. 
The operator $\Lambda_{q,\ka}$ is well defined as soon as $q\in L^{p}_\rad(\IB^d,\R)$, with $p$ admissible, and $\ka\in\IC\setminus\Spec_{H^1_0}(-\Delta+q)$. Since $q$ is radial, the DtN operator commutes with the action of $\SO(d)$ on $\IS^{d-1}$; as a consequence, its eigenfunctions are spherical harmonics (see \Cref{app:sh} for the definition and properties of spherical harmonics). More precisely, if $\gH^d_\ell$ denotes the subspace of spherical harmonics of degree $\ell\in\IN_0$ on $\IS^{d-1}$, then
\begin{equation} \label{id:DtN_map_eigenvalue}
    \Lambda_{q,\ka}|_{\gH^d_\ell}= \lambda_\ell[q,\kappa] \Id_{\gH^d_\ell}.
\end{equation} 
When $q=0$, the spectrum of $\Lambda_{0,\ka}$ can be explicitly computed. If $\ka =0$ 
then $\lambda_\ell[0,0] = \ell$ for every $\ell\in\N_0$, and, if  $\ka\in\IC\setminus\Spec_{H^1_0}(-\Delta)$ with $\ka\neq 0$, then
\begin{equation} \label{eq:free_dtn}
    \lambda_\ell[0,\kappa]=
        \ell - \sqrt{\kappa}\dfrac{J_{\ell+1+\nu_d}(\sqrt{\kappa})}{J_{\ell+\nu_d}(\sqrt{\kappa})},  \qquad \forall \ell \in \IN_0,
\end{equation}
where $J_\nu$ is the Bessel function of the first kind of index $\nu\in\IC$ (see \Cref{lemma:m_function_ka}). 

Given $\ell\in\IN_0$, the holomorphic function
\begin{equation}\label{eq:defvphi}
    \varphi_\ell(z):=\frac{J_{\ell+\nu_d}(z)}{z^{\nu_d}}=\frac{z^\ell}{2^{\ell+\nu_d}}\sum_{n=0}^\infty \frac{(-1)^n}{n!\Gamma(\ell+\nu_d+n+1)}\left(\frac{z}{2}\right)^{2n},\qquad z\in\IC,
\end{equation}
will play an important role throughout this work. For $\ka\in\IC$, the $\ka$-\textit{moments} of $q$ are:
\begin{equation}\label{e:moment}
    \sigma_\ell[q,\ka] :=  \frac{1}{|\IS^{d-1}|} \int_{\IB^d} q(x)  \varphi_\ell(\sqrt{\ka}|x|)^2  \, \d x,  \qquad \forall \ell \in \IN_0.
\end{equation}
Note that the functions $\ka\mapsto\sigma_\ell[q,\ka]$ are holomorphic in $\IC$.

The $\ka$-moments appear in the expression of the Fréchet differential of $\Phi^\ka$ at $q=0$. In \Cref{prop:frechet_der} we prove that, as soon as $\ka$ is not a Dirichlet eigenvalue of $-\Delta$ in the ball, $\d\Phi^\ka_0(q)$ is a bounded operator on $L^2(\IS^{d-1})$ for every $q\in L^{\infty}_\rad(\IB^d,\R)$. These operators turn out to be invariant by the action of $\SO(d)$ and are characterized by their restriction on spherical harmonics, which is precisely given by
\begin{equation}\label{eq:difac}
    \varphi_\ell(\sqrt{\ka})^2\d\Phi^\ka_0 (q)|_{\gH^d_\ell} = \sigma_\ell[q,\ka] \Id_{\gH^d_\ell},   \qquad \forall \ell \in \IN_0.
\end{equation}
Therefore, by \eqref{id:DtN_map_eigenvalue}, that a function $\qb$ satisfies $\d\Phi^\ka_0 (\qb)=\Lambda_{q,\ka}-\Lambda_{0,\ka}$ is formally equivalent to the fact that $\qb$ solves the following moment problem:
\begin{equation} \label{eq:moment_prob_formal}
   \sigma_\ell[\qb,\ka] = (\lambda_\ell[q,\kappa]-\lambda_\ell[0,\kappa])\varphi_\ell(\sqrt{\ka})^2, \qquad \qquad \forall \ell \in \IN_0.
\end{equation}
Note right away that, by \eqref{eq:free_dtn} and \eqref{eq:defvphi}, the left-hand side of the identity \eqref{eq:moment_prob_formal} is well defined as soon as $\ka\in\IC\setminus \Spec_{H^1_0}(-\Delta+q)$ even if $ \ka\in \Spec_{H^1_0}(-\Delta)$.

The existence of a solution for \eqref{eq:moment_prob_formal} is not guaranteed. In order to prove that a solution exists, we need to allow for the possibility of $\qb$ being a distribution. Let $U\subseteq \R^d$ be an open set. We denote by $\cE'(U)$  the space of distributions with compact support in $U$, and $\cE'_\rad(U) \subset \cE'(U)$ the subset of radially symmetric distributions. 

The definition \eqref{e:moment} can be extended from $L^{\infty}_\rad(\IB^d,\R)$ to the space $\cE_\rad'(\R^d)$ as follows: 
for $f\in \cE_\rad'(\R^d)$, $\ka\in\IC$ and $\ell\in\IN_0$ define
\begin{equation} \label{e:moment_distributions}
    \sigma_\ell[f,\kappa]:= \frac{1}{\abs{\Sp^{d-1}}}\br{f, (\varphi_{\ka,\ell})^2}_{\cE'\times\cC^\infty},\quad \text{ where }\varphi_{\ka,\ell}(x):=\varphi_\ell(\sqrt{\ka}|x|),
\end{equation}
where $\br{\cdot, \cdot}_{\cE'\times\cC^\infty}$ denotes the duality pairing in $\cE'(\R^d)$. This makes sense since $(\varphi_{\ka,\ell})^2\in\cC^\infty(\IR^d)$.

To state our first result, we define
\begin{equation*}
    \cB_d := L^1_\rad(\IB^d) + \cE'_\rad(\IB^d).
\end{equation*}
The elements of $\cB_d$ are radial distributions in $\IB^d$ that coincide with $L^1$ functions in a neighborhood of the boundary of $\IB^d$.
This implies that the extension by zero of a distribution in $\cB_d $ is an element of $\cE'_\rad(\R^d)$. In this way, when convenient, $\cB_d$ can be identified with a subset of $\cE'_\rad(\R^d)$. We adopt this perspective in a few points in this work, for example, to define the moments and the Fourier transform of elements of $\cB_d$ using \eqref{e:moment_distributions} and \eqref{eq:Fourier_distribution}. Note that \eqref{eq:difac} allows us to extend $\d\Phi^\ka_0$ to $\cB_d$.

Let
    \begin{equation*}
        X_{p,\ka}(\IB^d):=\{q\in L^p_\rad(\IB^d,\IR)\,:\,\ka\not\in\Spec_{H^1_0}(-\Delta+q)\}.
    \end{equation*}
We show, for potentials $X_{p,\ka}$ with $p$ satisfying \eqref{eq:p_cond}, the existence of $\qb$ as an element of $\cB_d$ and establish a formula for its Fourier transform that shows, in particular, that $\qb$ is uniquely determined by \eqref{eq:moment_prob_formal} in $\cB_d$ (see also \Cref{main-thm:Fourier Radial}). The Fourier transform of $f\in\cE_\rad'(\R^d)$ is defined as
\begin{equation} \label{eq:Fourier_distribution}
    \F (f)(\xi)  := \br{f,e_{-i\xi}}_{\cE'\times\cC^\infty}, \qquad  e_\zeta(x):=e^{\zeta\cdot x},\qquad  x,\xi \in \R^d,\; \zeta\in\IC^d.
\end{equation}
We denote by $Z_{\ell,d}$, $\ell\in\IN_0$, the \textit{zonal functions}, which are characterized in terms of $\cP_{\ell,d}:L^2(\IS^{d-1})\To\H_\ell^d$, the orthogonal projection onto $\gH^d_\ell$, by 
 \begin{equation*}
    \cP_{\ell,d}u(x)=\int_{\IS^{d-1}}Z_{\ell,d}(x\cdot y)u(y)\, \d{y},\qquad u\in L^2(\IS^{d-1}).  
\end{equation*}
The zonal functions are directly related to the Legendre polynomials; more details are given in \Cref{app:sh}.  
\begin{main-theorem}\label{main-thm:existence_Born}
    Let $\ka\in\R$ and $p$ be admissible. For every $q \in X_{p,\ka}(\IB^d)$
    there exists a unique $\qb \in \cB_d$ such that
    \begin{equation}\label{eq:Born_moments}
        \sigma_\ell[\qb,\kappa] = (\lambda_\ell[q,\kappa]-\lambda_\ell[0,\kappa]) \varphi_\ell(\sqrt{\ka})^2, \qquad \text{for all } \, \ell \in \N_0.
    \end{equation}
    The Fourier transform of this distribution can be obtained from the expression:
    \begin{equation}\label{eq:Born_Fourier_introduction}
         \F (\qb)(\xi)  = 
        (2\pi)^d \sum_{\ell=0}^\infty (\lambda_\ell[q,\kappa] - \lambda_\ell[0,\kappa])\varphi_\ell(\sqrt{\ka})^2  Z_{\ell,d}\left(1-\frac{\abs{\xi}^2}{2\kappa}\right).        
    \end{equation}
    If we further assume  $\kappa \notin  \Spec_{H^1_0}(-\Delta)$ then it also holds that
    \begin{equation}\label{eq:linear_cgos}
        \F (\qb)(\xi) = \hp{\cc{e_{\zeta_1}}}{(\Lambda_{\kappa,q}-\Lambda_{\kappa,0})e_{\zeta_2}}_{L^2(\Sp^{d-1})},        
    \end{equation} 
    with $\zeta_1,\zeta_2\in \C^d$ satisfying 
    $\zeta_1+\zeta_2=-i\xi$ and $\zeta_1\cdot\zeta_1 =\zeta_2\cdot\zeta_2=-\kappa$.
\end{main-theorem}
We will refer to $\qb$ as the \textit{Born approximation} of $q$ at energy $\ka$. Its explicit reconstruction formula \eqref{eq:Born_Fourier_introduction} can be used to numerically approximate $q$; see \Cref{sec:subsec_numerical} below. On the other hand, the identity \eqref{eq:linear_cgos} shows that $\qb$ can also be obtained by complex geometrical optics solutions as was done for $\ka=0$ in \cite{BCMM_2022_Born}.

Our next result shows, among other properties, that $\qb$ always coincides with a locally integrable function outside the origin.   In fact, the Born approximation is as singular as the potential is, except possibly at the origin.

We define the weighted $L^1$-space $L^1_s(\IB^d): = \{ f\in L^1_\loc(\IB^d\setminus\{0\}) : \; \norm{f}_{L^1_s(\IB^d)}<\infty\}$ where $s \in \R$ and 
\begin{equation} \label{eq:norm_L1_wheight}
    \norm{F}_{L^1_s(\IB^d)} := \int_{\IB^d} |F(x)|  |x|^{2(s-\nu_d)} \, \d x.
\end{equation}

\begin{main-theorem}\label{main-thm:basic_properties}
    Let $\ka \in \R $ and $p$ be admissible. For every $q\in X_{p,\ka}(\IB^d)$, there exists an $\ell_q\geq0$ and a real and radial function $\qs \in L^1_{\ell_q}(\IB^d)$ such that $\qb|_{\IB^d\setminus \{0\}} = \qs$. In fact,
    \begin{equation}\label{eq:bdryv}
        \qb - q\in \cC(\ol{\IB^d}\setminus \{0\}),\qquad (\qb - q)|_{\IS^{d-1}} =0,
    \end{equation}
    and $\qb- q$ belongs to $\cC^{m+1}(\ol{\IB^d} \setminus \{0\})$ if $q \in \cC^{m}(\ol{\IB^d}  \setminus \{0\})$  for any $m\in \N_0$.

    If in addition, $p>d/2$, $\ka < (j_{0,1})^2$, and $q-\ka \ge 0$  $a.e.$ in $\IB^d$, then $\qb \in L^1_\rad(\IB^d)$.

\end{main-theorem}
\begin{remark} 
The identity $\qb|_{\IB^d\setminus \{0\}} = \qs$ holds in the sense of distributions and shows that $\qb$ can be identified with a locally integrable function on $\IB^d\setminus\{0\}$. In other words, $\qb$ is a regularization of the singular function $\qs$ in the sense of \cite[Chapter 1]{GelShilVol1}.
Note that \eqref{eq:bdryv} implies that $q|_{\IS^{d-1}}=\qb|_{\IS^{d-1}}$ when $q$ has a well-defined trace on the boundary. See \Cref{prop:q_s_def} for a more precise estimate of the pointwise behavior of $|\qb-q|$ on $\IB^d\setminus\{0\}$.
\end{remark}

We prove two important properties of the map $(\Phi_{\rm B}^\ka)^{-1}$: a strong local form of injectivity and Hölder continuity under uniform $L^p(\IB^d)$ bounds for the potentials.
\begin{main-theorem}\label{main-thm:stability}
     Let $\ka \in \R$, $p$ be admissible and $b\in(0,1)$. Let $U_b:= \{x\in\IB^d: b<|x|< 1\}$.
    \begin{enumerate}[i)]
        \item For every $q_1,q_2\in X_{p,\ka}(\IB^d)$ let $q_{1,\ka}^\mB$ and $q_{2,\ka}^\mB$ the respective Born approximations.  
        Then 
        \begin{equation*}
        q_{1,\ka}^\mB(x) = q_{1,\ka}^\mB(x) \; a.e. \text{ in } \, U_b \; \iff \; q_1(x) = q_2(x)\; a.e. \text{ in } \, U_b.
        \end{equation*}
        \item For every $K>0$ there exist $\varepsilon(b,\ka) >0$ and  $C(b,\ka,K)>0$ such that, for every $q_1,q_2\in  X_{p,\ka}(\IB^d)$ satisfying
        \begin{equation*}
         \max_{j=1,2} \norm{q_j}_{L^p(U_b)}  <  K,\qquad  \left\| q^\mB_{1,\ka}  - q^\mB_{2,\ka}  \right\|_{L^1(U_b)}  <\varepsilon(b,\ka),
        \end{equation*}
        one has
        \begin{equation}\label{est:stability_qB}
         \left\| q_1  - q_2  \right\|_{L^1(U_b)}    < C(b,\ka,K)
         \left(\left\| q^\mB_{1,\ka}  - q^\mB_{2,\ka}  \right\|_{L^1(U_b)}\right)^{1/(p'+1)},   
        \end{equation}
     where $p'$ is the Hölder conjugate exponent of $p$.
    \end{enumerate}
\end{main-theorem}

We finally show that in the high-energy limit $\ka\to-\infty$, the Born approximation at energy $\ka$ recovers the potential.
\begin{main-theorem}\label{main-thm:limit}
    Let $q\in L^\infty(\IB^d,\IR)$, $\kappa\in\IR$, $\xi\in\IR^d$, and $\zeta_1,\zeta_2\in \C^d$ be such that $\zeta_1+\zeta_2=-i\xi$, and $ \zeta_1\cdot\zeta_1 =\zeta_2\cdot\zeta_2=-\kappa$.
    Then, locally uniformly in $\xi$,
    we have
    \begin{equation} \label{eq:no_radial_lim}
      \lim_{\kappa\to-\infty}\hp{\cc{e_{\zeta_1}}}{(\Lambda_{q,\kappa}-\Lambda_{0,\kappa})e_{\zeta_2}}_{L^2(\Sp^{d-1})}=\F q(\xi).  
    \end{equation}
    In particular, whenever $q\in L^\infty_\rad(\IB^d,\IR)$, in which case $\qb$ is always defined when $\ka\leq -\norm{q}_{L^\infty(\IB^d)}$, we have
    \begin{equation}\label{eq:semiclass}
        \lim_{\kappa\to-\infty}\F \qb(\xi)=\F q(\xi),\qquad \text{for all }\xi\in\R^d.     
    \end{equation}
\end{main-theorem}
\begin{remark} \
    \begin{enumerate}[i)]
        \item Notice that the statement \eqref{eq:no_radial_lim} is valid even if $q$ is not radial. Moreover, it will be clear from the proof of \eqref{eq:no_radial_lim} that the analogous result holds for any bounded domain $\Omega\subseteq\R^d$ with $\cC^1$ boundary.
        \item The convergence in \eqref{eq:semiclass}, which involves entire functions, takes place in Fourier space. At this moment, it is not clear that this convergence can be expressed in a natural way in physical space; however, this should be possible at least for some classes of potentials. 
        \item The following is proved in \Cref{prop:spectrum}:
        \begin{equation*}
            \lim_{\ka \to -\infty}\Tr |\Phi^\ka(q)- \d \Phi^\ka_0(q)|=\lim_{\ka \to -\infty}\Tr |\Lambda_{q,\ka}-\Lambda_{0,\ka}-\d \Phi^\ka_0(q)|=0.
        \end{equation*}
        This relation complements \eqref{eq:semiclass}, to actually show that the non-linear map $\Phi^\ka$ is asymptotically linear in the high-energy limit and that $\Phi_{\rm B}^\ka$ converges to the identity as $\ka\to-\infty$.
    \end{enumerate}
\end{remark}
The local uniqueness phenomenon proved in the first part of \Cref{main-thm:stability} is illustrated in \cref{Fig:4}, whereas the phenomenon of recovery of singularities in \Cref{main-thm:basic_properties} is illustrated in \cref{Fig:1}, \cref{Fig:3} and \cref{Fig:4}. Evidence for the convergence of the Born approximations to the potential in physical space is presented in \cref{Fig:3} below.

Theorems \ref{main-thm:existence_Born} to \ref{main-thm:limit} are proved in \Cref{sec:proofs}. The analogues of Theorems \ref{main-thm:existence_Born}, \ref{main-thm:basic_properties} and \ref{main-thm:stability} in the case $\ka=0$ have been proved in \cite{Radial_Born}. These results can be recovered from the ones in this work in the limit $\ka\to 0$. For instance, the classical \textit{Hausforff moments}, which played the same role as the $\ka$-moments here, can be obtained from $\sigma_\ell[q,\ka]$ by
\begin{equation} \label{eq:limit_res}
    4^{\ell+\nu_d} \Gamma(\ell+d/2)^2\lim_{\ka\to 0}\frac{\sigma[q,\ka]}{\ka^\ell }=\frac{1}{|\IS^{d-1}|} \int_{\IB^d} q(x)  |x|^{2\ell}  \, \d x,  \qquad \forall \ell \in \IN_0.
\end{equation}
The proofs of the main results in \cite{Radial_Born} rely on tools from the inverse spectral theory of Schrödinger operators on the half-line, mainly results involving the notion of $A$-amplitude of a Schrödinger operator introduced by Simon in \cite{Simon_spectral_I}, and further developed in \cite{Simon_spectral_II, Simon_spectral_III, Remling03, Avdonin_Mikhay_Rybkin_07}, among many other works. In our case, these tools are not directly applicable, which leads us to introduce and prove the existence of an $A_\ka$-amplitude that encodes spectral information in a similar way as Simon's $A$-amplitude does, but is better adapted to our setting. The main results on the $A_\ka$-amplitude are stated in \Cref{sec:subsec_1dstate} and proved in \Cref{sec:subsec_1d}.

We note that the approach based on one-dimensional inverse spectral theory has been applied in the context of the Steklov problem for warped
product manifolds in \cite{DKN21_stability_steklov, DHN21, DKN23}.
In particular, the results in \cite{DKN21_stability_steklov}   imply stability and uniqueness results for the radial Calderón problem, both for the conductivity and Schrödinger cases.
We also mention that spectral theory methods had already been used in the context of the radial Calderón problem in \cite{KoVo85,Syl_1992}.

\subsection{Numerical reconstruction} \label{sec:subsec_numerical}

In this subsection, we show the capabilities of the Born approximation as an effective tool to approximate the potential; numerical methods based on this strategy are described in \cite{BCMM_2024_numerical} in the case $\ka=0$. The key remark is that the Fréchet differential $\d \Phi^\ka_0$ coincides with the differential of $\Phi^0$ around the constant function $-\ka\in\IR\setminus\Spec_{H^1_0}{\Delta}$, and that $\Lambda_{q,0}=\Lambda_{q+\kappa,\kappa}$. Therefore, for such $\ka$, given $q\in L^\infty_\rad(\IB^d,\IR)$ such that $0$ is not a Dirichlet eigenvalue of $-\Delta+q$, one should have:
\begin{equation*}
    q\approx -\ka + \d \Phi^0_{-\ka}(\Lambda_{q,0} - \Lambda_{0,\ka}) = -\ka + (q+\kappa)^{\mathrm{B}}_\kappa.
\end{equation*}
When $\ka$ is chosen appropriately, so that $q +\ka$ is small in some norm, one can expect this function to be well-approximated by $(q+\kappa)^{\mathrm{B}}_\kappa$. 
Recall that, by \Cref{main-thm:existence_Born}, the Born approximation $(q+\kappa)^{\mathrm{B}}_\kappa$ is constructed solely in terms of $\Spec(\Lambda_{q,0})$ (the dependence on $\ka$ being explicit). 
A possible choice of $\ka$ that can be implemented numerically consists in ensuring that $\int_{\IB^d}(q+\kappa)^{\mathrm{B}}_{\kappa} \d x=\F(q+\kappa)^{\mathrm{B}}_{\kappa}(0)$, which is a quantity that depends only on $\Spec(\Lambda_{q,0})$ and $\kappa$, is the smallest possible. 
It is therefore natural to choose $\ka=\kappa_*$ defined by the equation 
$$\F(q+\kappa_*)^{\mathrm{B}}_{\kappa_*}(0)=0.$$
Numerical experiments suggest that if $\kappa_*$ exists, it can be obtained by the iterative fixed point scheme
$$\kappa_{n+1}=\kappa_n-\frac{\F(q+\kappa_{n})^{\mathrm{B}}_{\kappa_n}(0)}{|\IB^d|}.$$

We illustrate these ideas in \cref{Fig:1,Fig:2} where we plot the radial profiles of $q:\IB^3\to\R$ and the corresponding $(q+\kappa)^{\mathrm{B}}_\kappa-\kappa$ for
$$\kappa=0,\qquad\kappa=-q|_{\IS^{d-1}},\qquad\kappa=-\frac{1}{2}\left(\sup_{\IB^3} q+ \inf_{\IB^3} q\right),\qquad\kappa=\kappa_*.$$
In the caption of each plot, we include the $L^1(\IB^3)$ norm of the difference of the functions plotted. Notice that in all eight plots, except for \cref{Fig:2} (A) where the Gibbs phenomenon is too strong, $(q+\kappa)-(q+\kappa)^{\mathrm{B}}_\kappa$ is a continuous function outside $x=0$ that vanishes at $\IS^{2}$, as stated in \Cref{main-thm:basic_properties}.

The Julia code used to produce these figures can be found in \cite{JuliaCode}.

\begin{figure}[h]
        \centering
        \begin{subfigure}[b]{0.475\textwidth}
            \centering
            \includegraphics[width=0.9\textwidth]{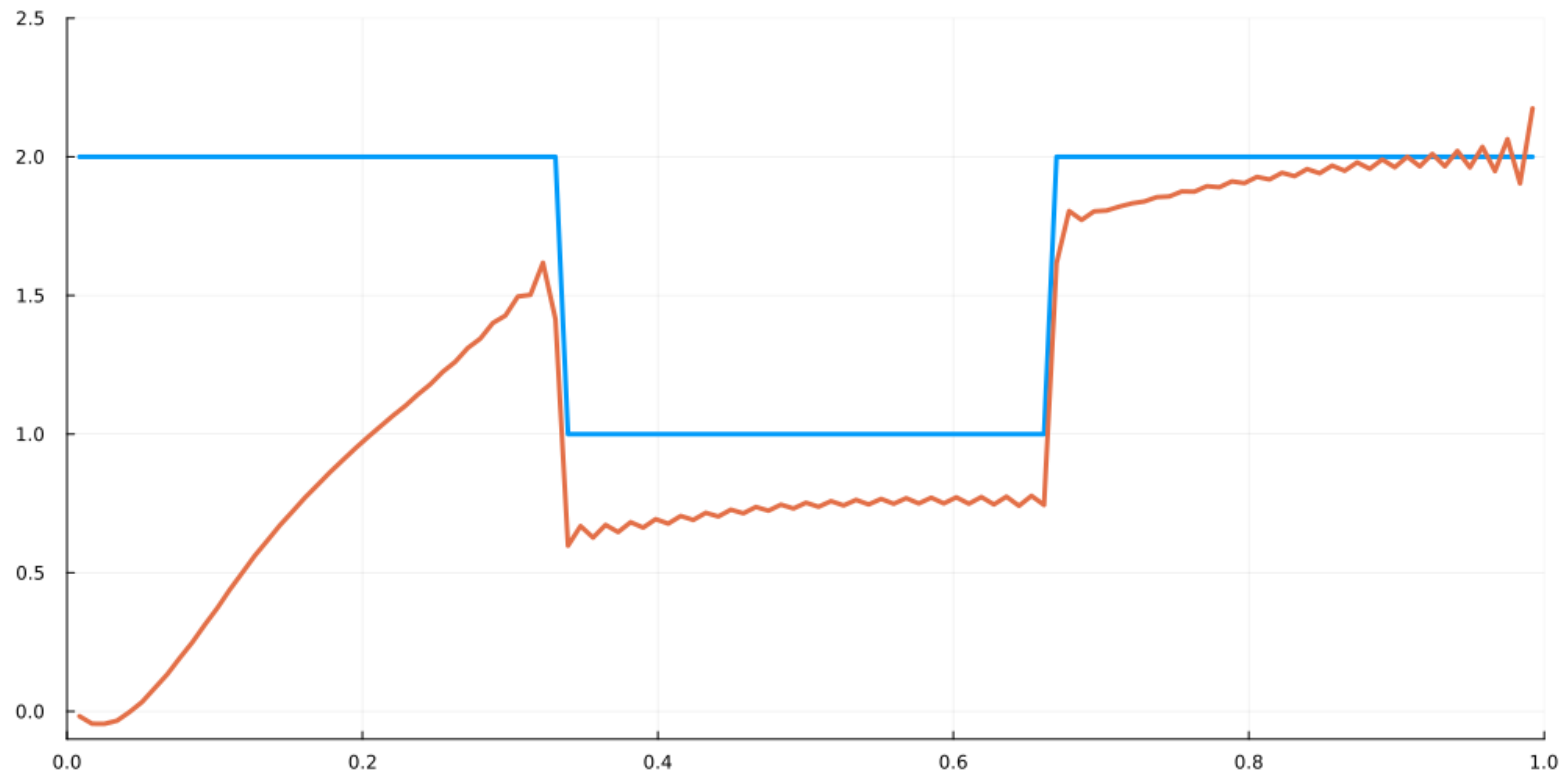}
            \caption{$\ka=0$, $\norm{\,\cdot\,}_{L^1(\IB^3)}=0.66066$.}  
        \end{subfigure}
        \hfill
        \begin{subfigure}[b]{0.475\textwidth}  
            \centering 
            \includegraphics[width=0.9\textwidth]{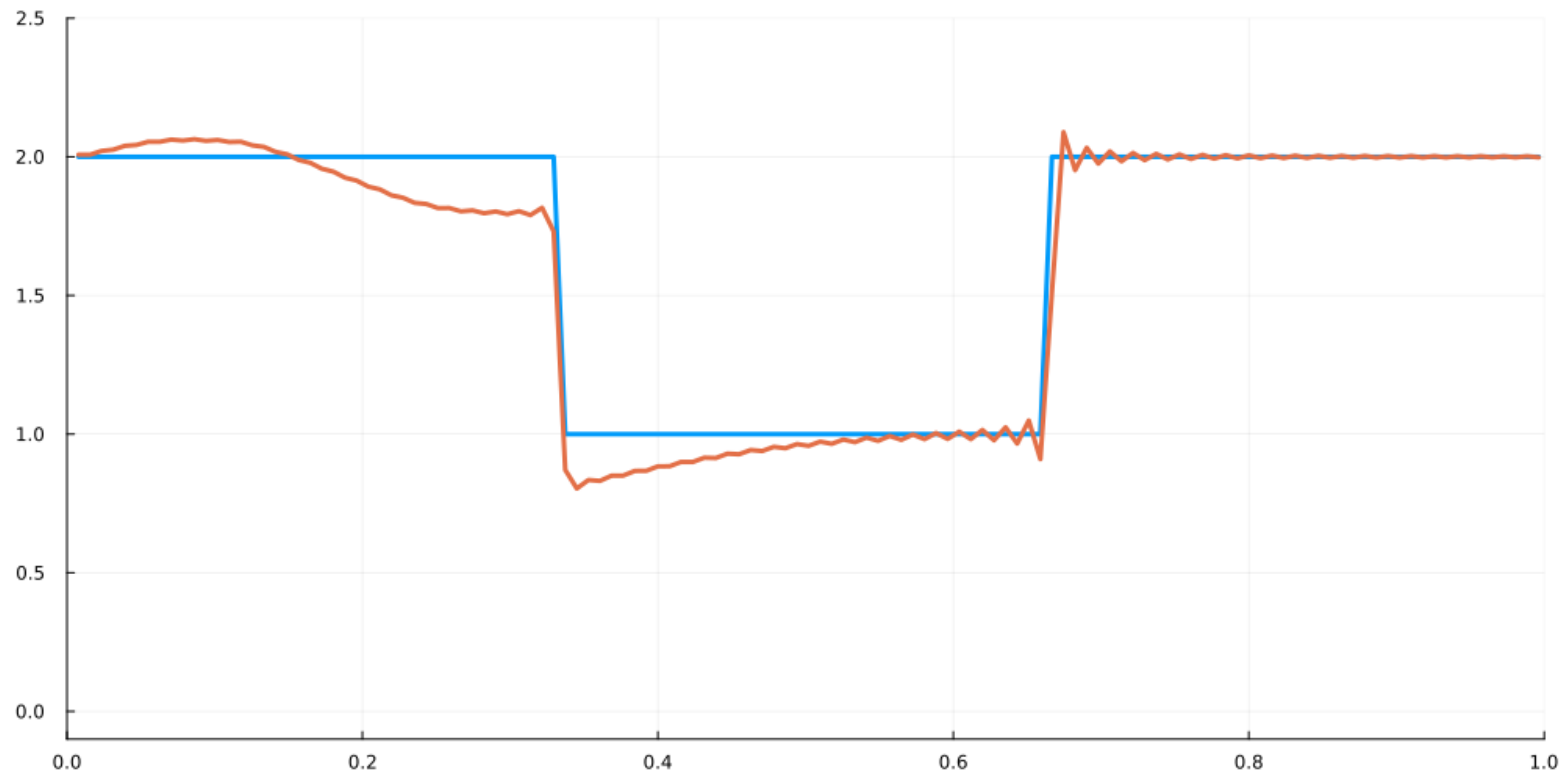}
            \caption{$\ka=-2$, $\norm{\,\cdot\,}_{L^1(\IB^3)}=0.12155$.}
        \end{subfigure}
        \vskip\baselineskip
        \begin{subfigure}[b]{0.475\textwidth}   
            \centering 
            \includegraphics[width=0.9\textwidth]{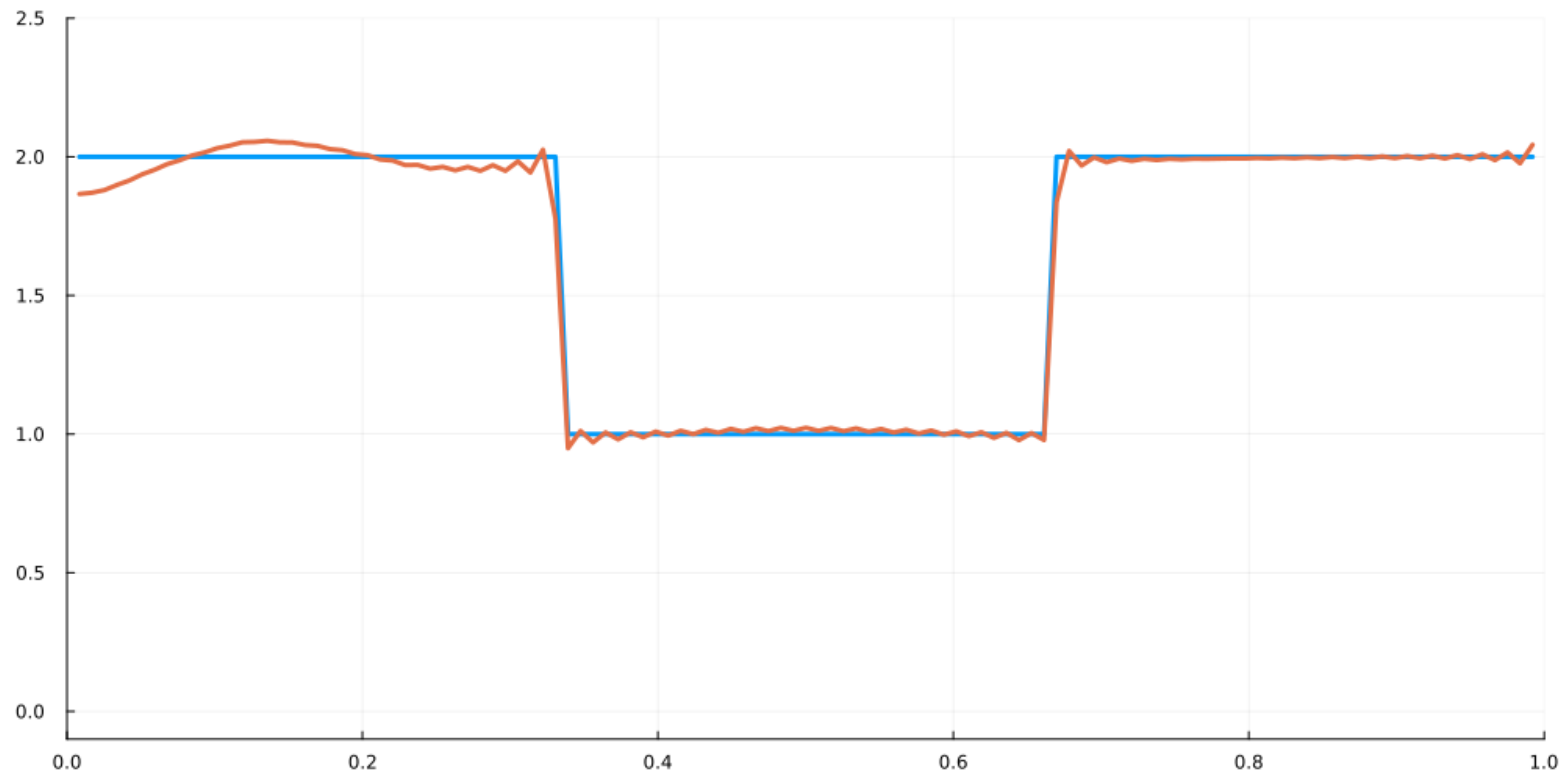}
            \caption{$\ka=-1.5$, $\norm{\,\cdot\,}_{L^1(\IB^3)}=0.05551$.}
        \end{subfigure}
        \hfill
        \begin{subfigure}[b]{0.475\textwidth}   
            \centering 
            \includegraphics[width=0.9\textwidth]{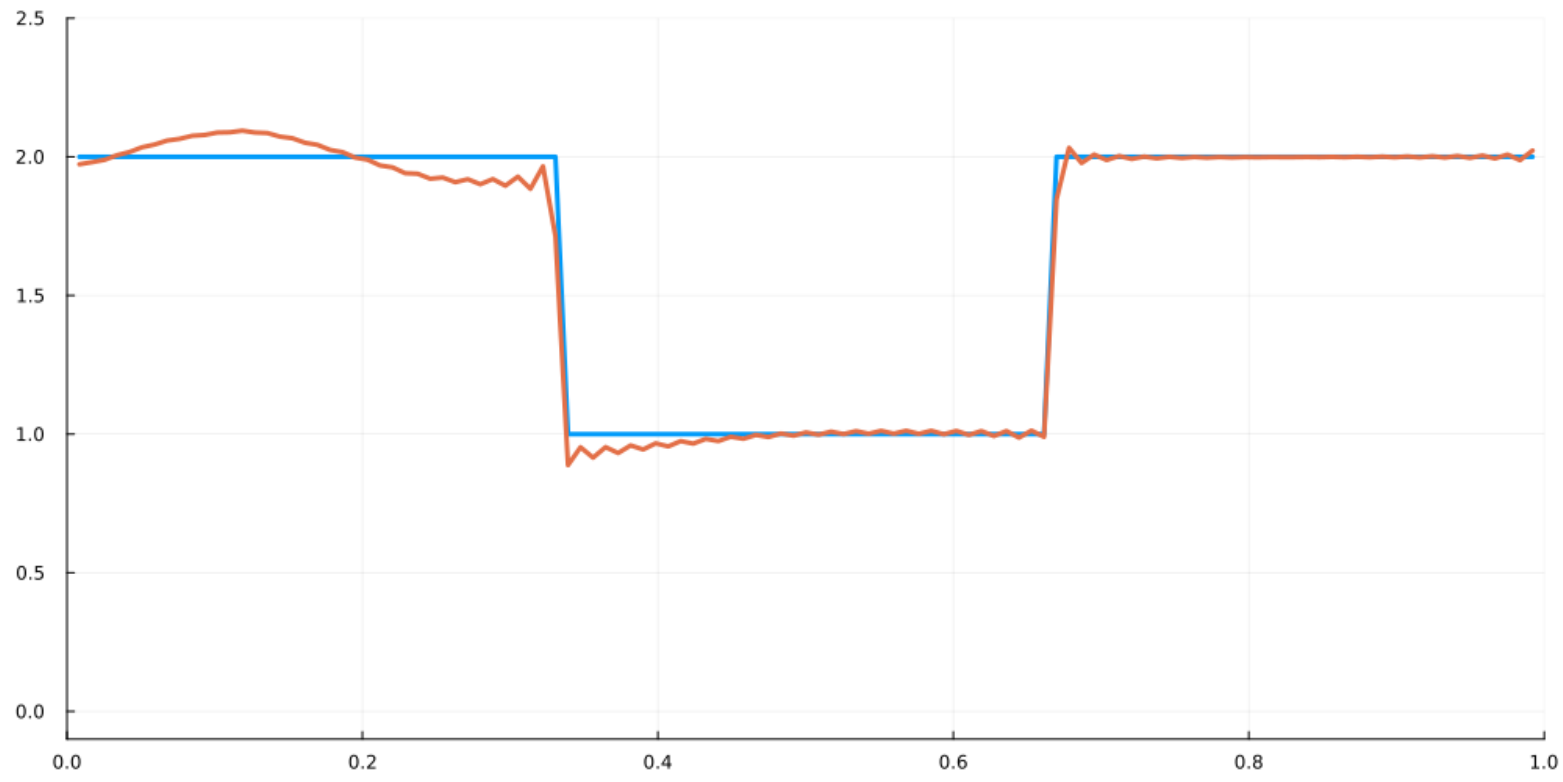}
            \caption{$\ka=\ka_*=-1.73614$, $\norm{\,\cdot\,}_{L^1(\IB^3)}=0.05084$.}
        \end{subfigure}
        \caption{Plots of $q(x)=2-\chi_{(\frac{1}{3},\frac{2}{3})}(\abs{x})$ (blue) and $(q+\kappa)^{\mathrm{B}}_\kappa-\kappa$ (orange).}
        \label{Fig:1}
\end{figure}

\clearpage

\begin{figure}[t]
        \centering
        \begin{subfigure}[b]{0.475\textwidth}
            \centering
            \includegraphics[width=0.9\textwidth]{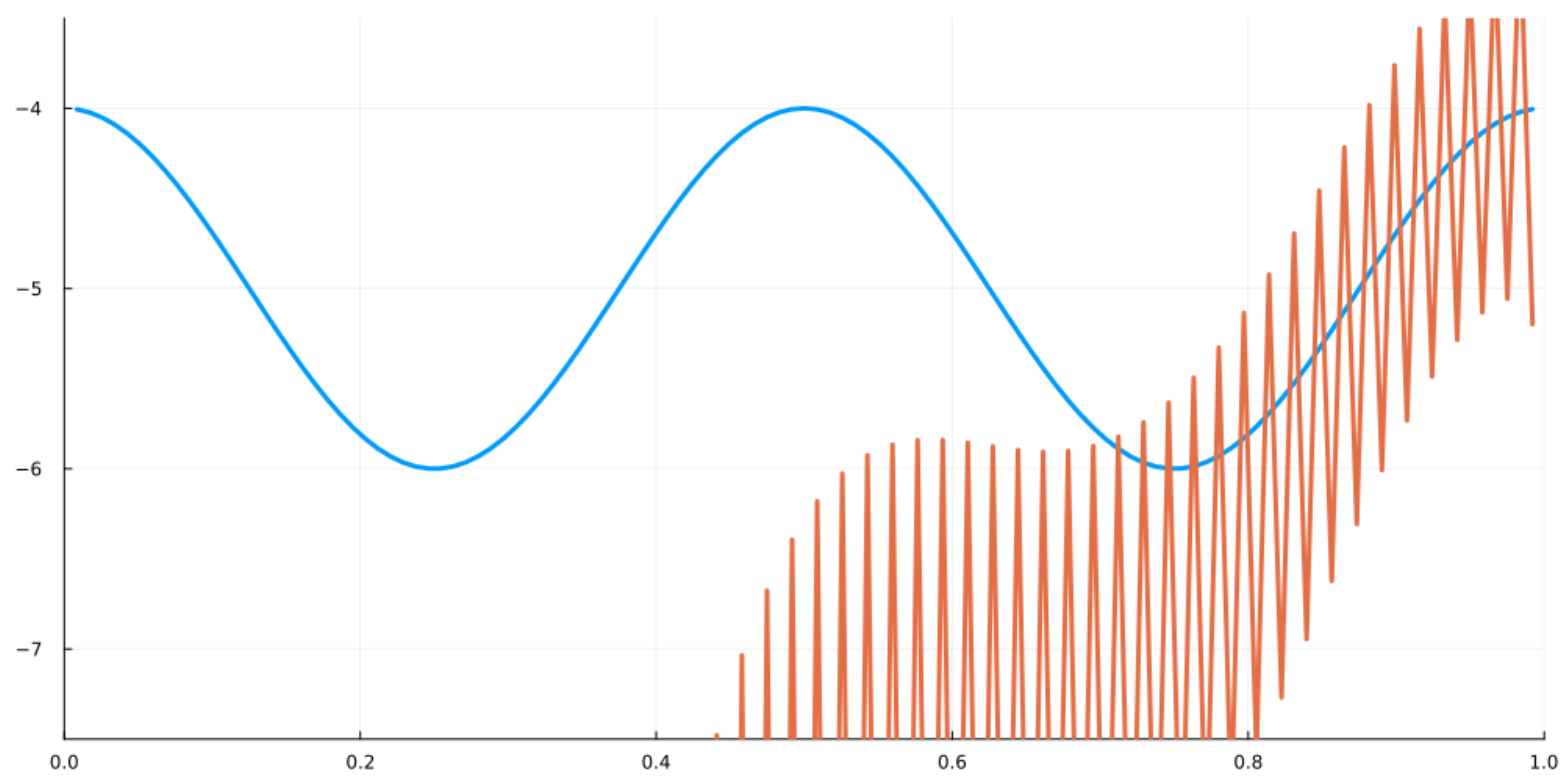}
            \caption{$\ka=0$, $\norm{\,\cdot\,}_{L^1(\IB^3)}=16.4772$.} 
        \end{subfigure}
        \hfill
        \begin{subfigure}[b]{0.475\textwidth}  
            \centering 
            \includegraphics[width=0.9\textwidth]{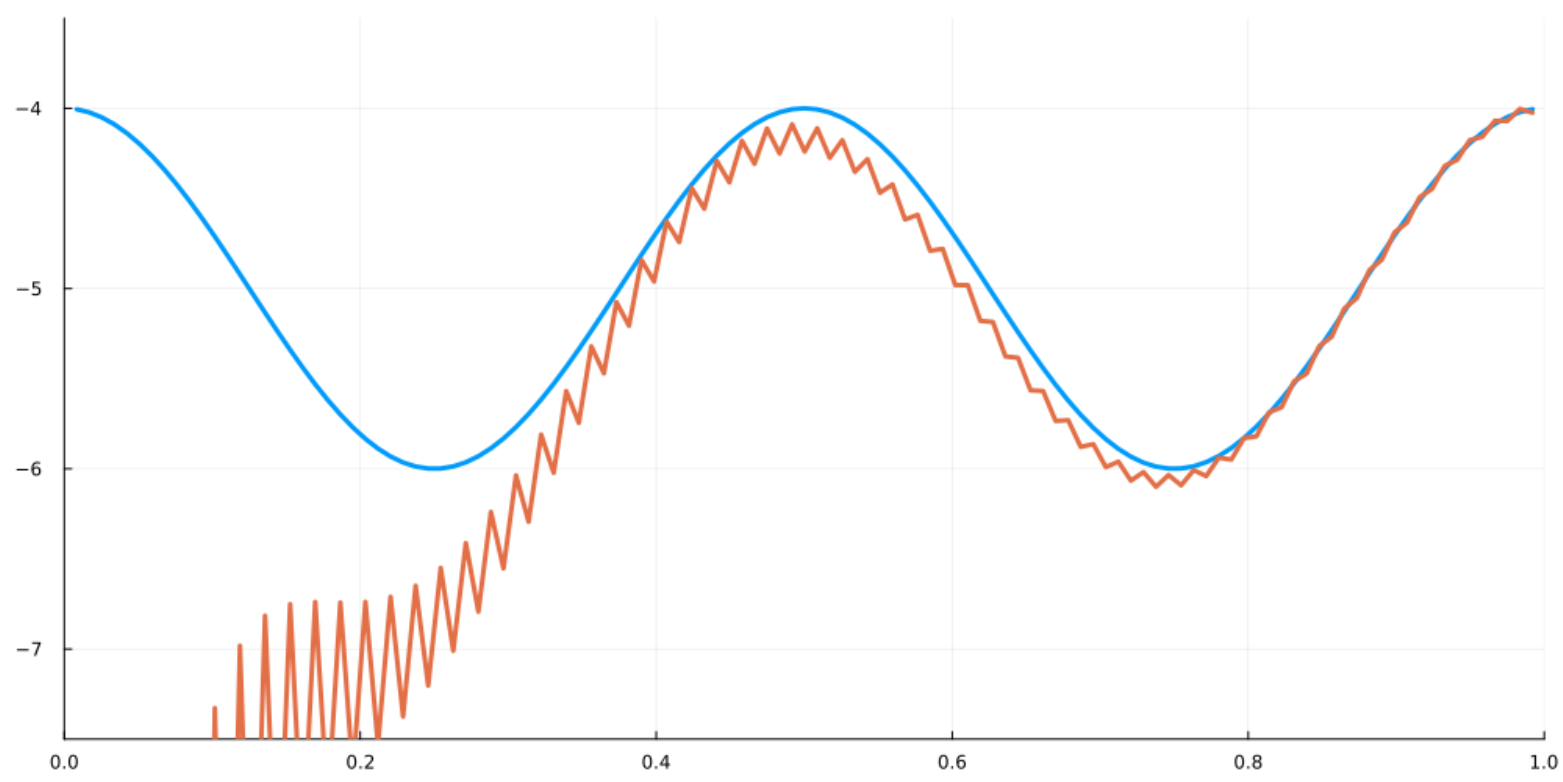}
            \caption{$\ka=4$, $\norm{\,\cdot\,}_{L^1(\IB^3)}=0.51683$.}
        \end{subfigure}
        \vskip\baselineskip
        \begin{subfigure}[b]{0.475\textwidth}   
            \centering 
            \includegraphics[width=0.9\textwidth]{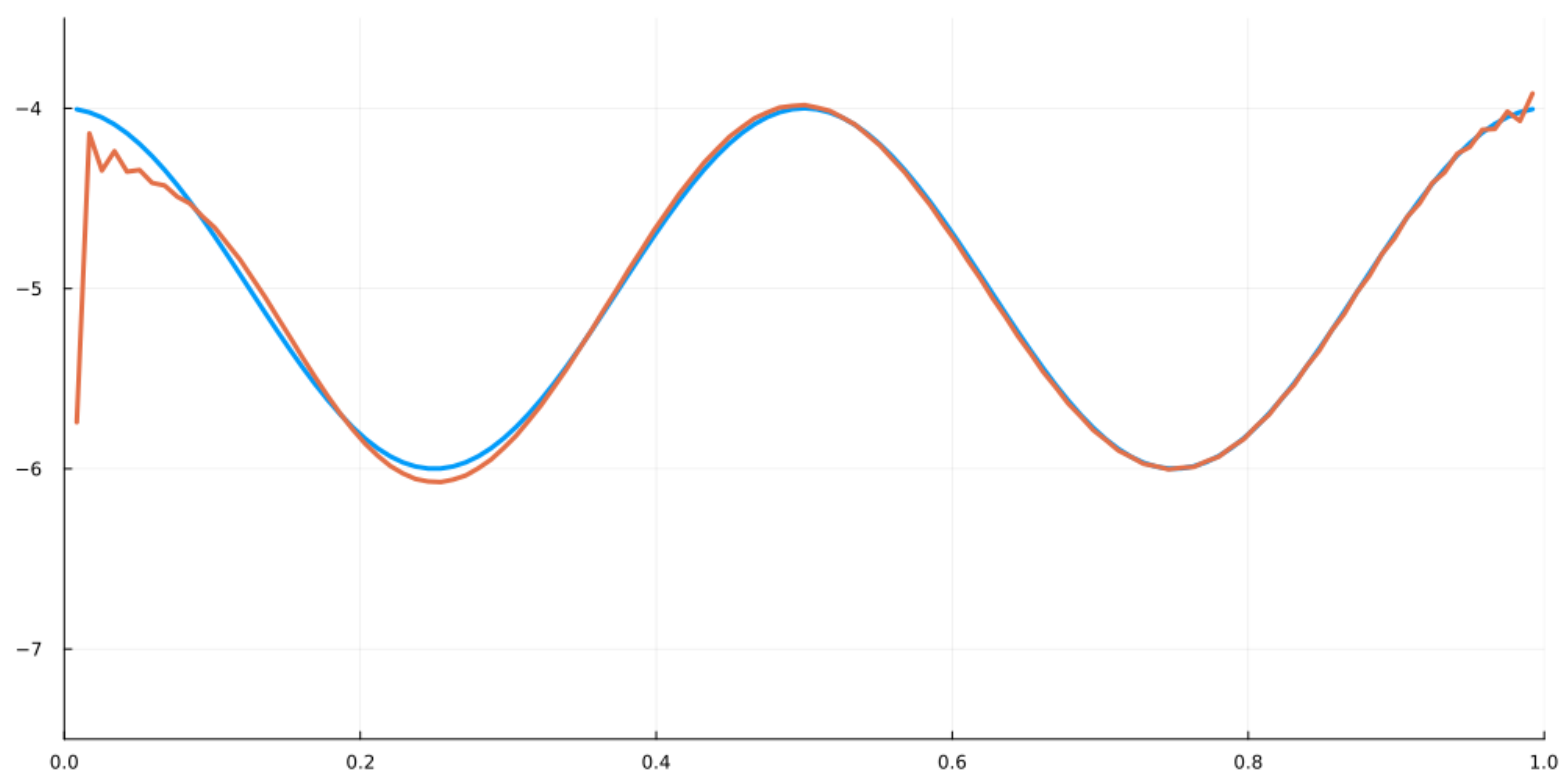}
            \caption{$\ka=5$, $\norm{\,\cdot\,}_{L^1(\IB^3)}=0.07038$.}
        \end{subfigure}
        \hfill
        \begin{subfigure}[b]{0.475\textwidth}   
            \centering 
            \includegraphics[width=0.9\textwidth]{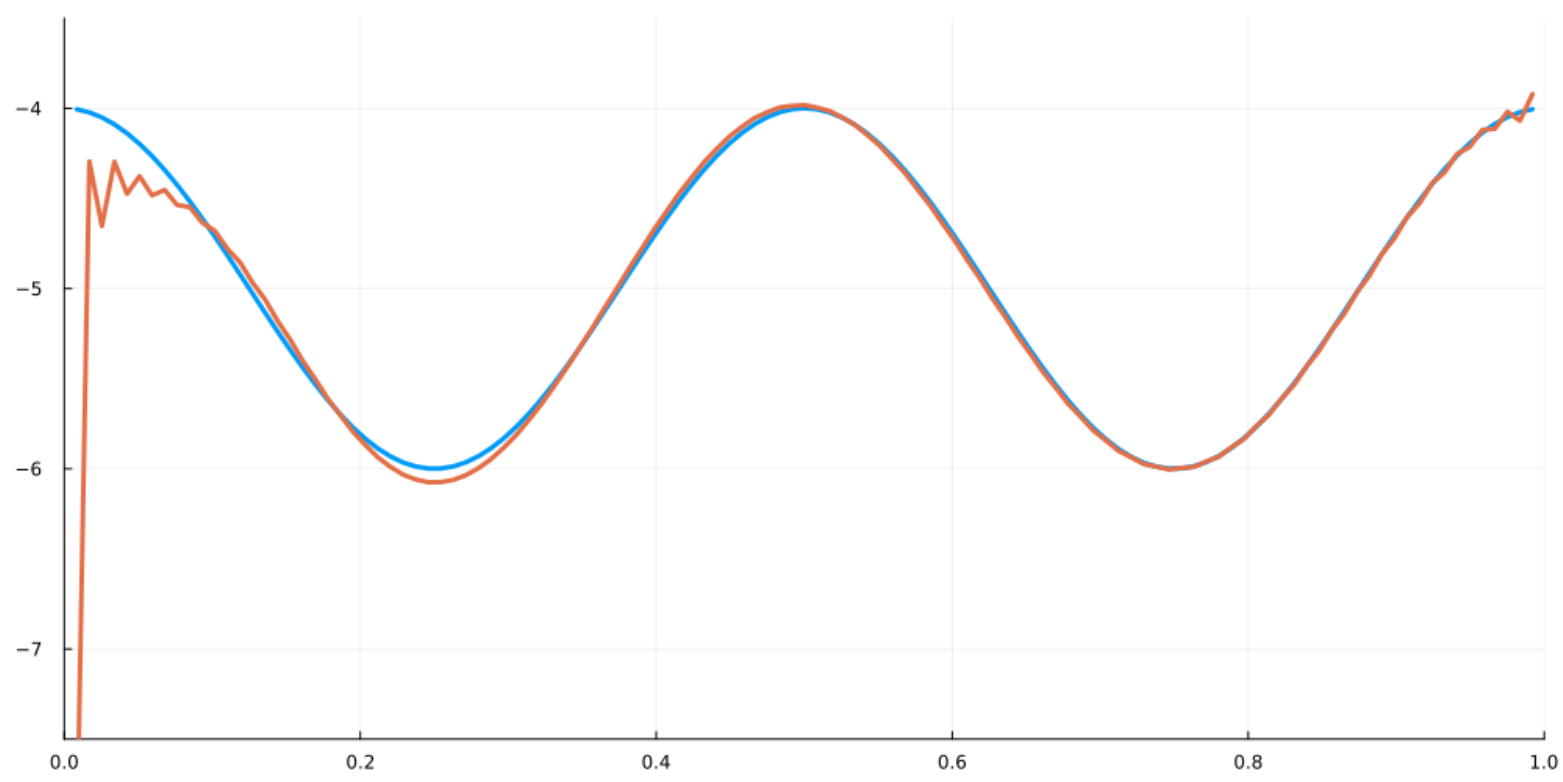}
            \caption{$\ka=\ka_*=4.96727$, $\norm{\,\cdot\,}_{L^1(\IB^3)}=0.07200$.}
        \end{subfigure}
        \caption{Plots of $q(x)=\cos(4\pi\abs{x})-5$ (blue) and $(q+\kappa)^{\mathrm{B}}_\kappa-\kappa$ (orange).}
        \label{Fig:2}
\end{figure}

\begin{figure}[b]
        \centering 
        \begin{subfigure}[b]{0.475\textwidth}
            \centering
            \includegraphics[width=0.9\textwidth]{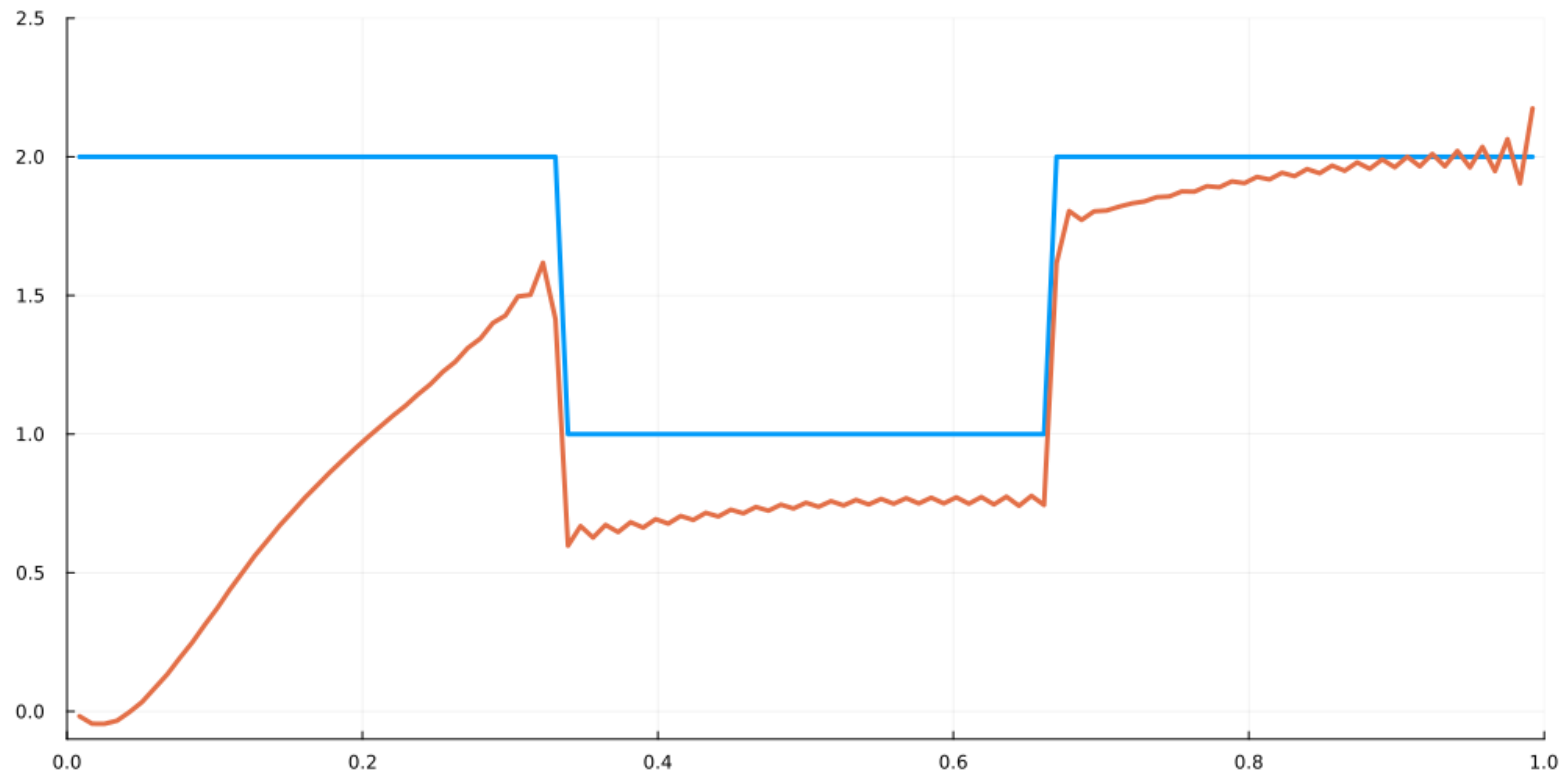}
            \caption{$\ka=0$, $\norm{\,\cdot\,}_{L^1(\IB^3)}=0.66066$.} 
        \end{subfigure}
        \hfill
        \begin{subfigure}[b]{0.475\textwidth}  
            \centering 
            \includegraphics[width=0.9\textwidth]{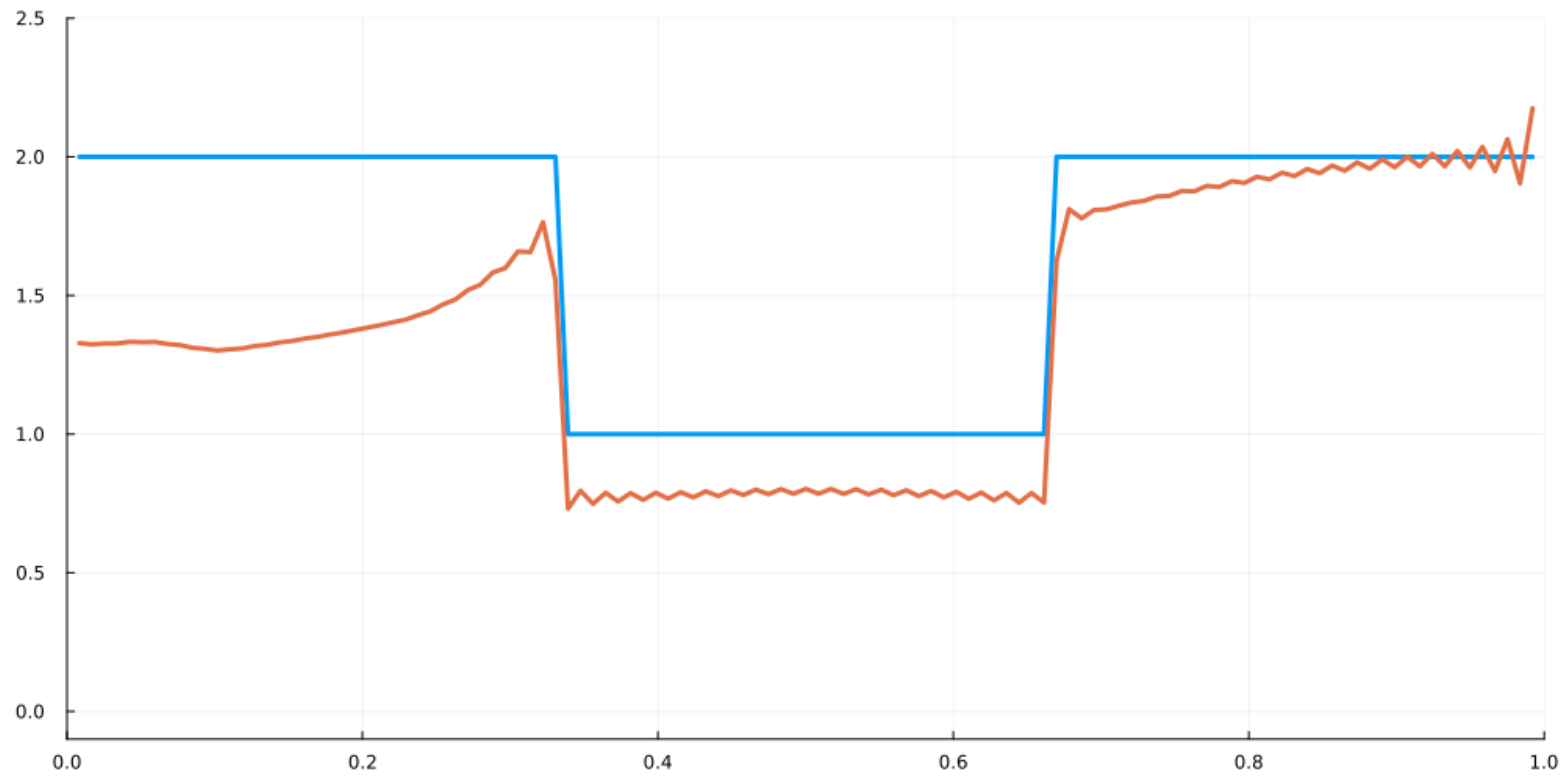}
            \caption{$\ka=-10$, $\norm{\,\cdot\,}_{L^1(\IB^3)}=0.56134$.}
        \end{subfigure}
        \vskip\baselineskip
        \begin{subfigure}[b]{0.475\textwidth}   
            \centering 
            \includegraphics[width=0.9\textwidth]{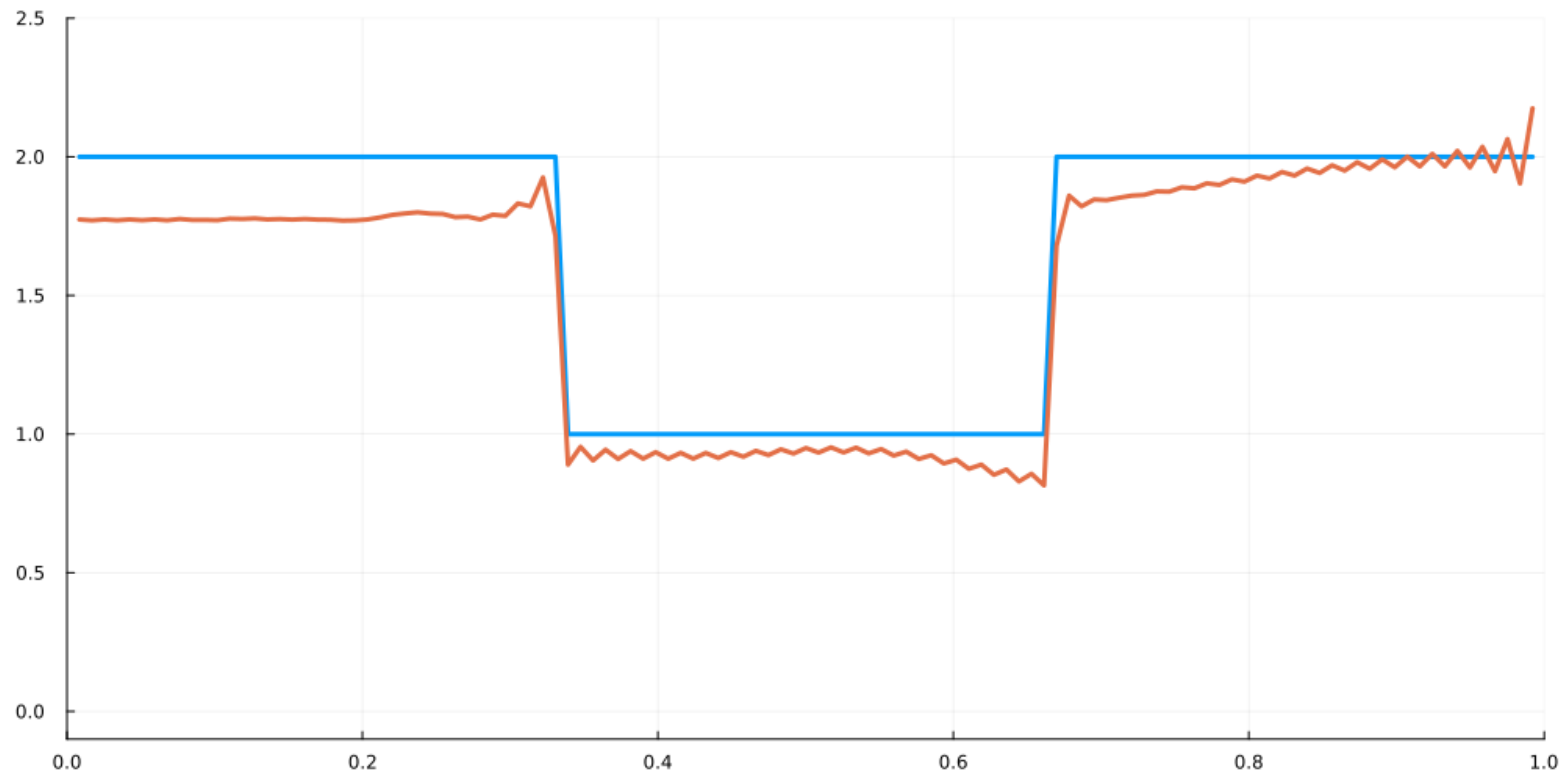}
            \caption{$\ka=-100$, $\norm{\,\cdot\,}_{L^1(\IB^3)}=0.35731$.}
        \end{subfigure}
        \hfill
        \begin{subfigure}[b]{0.475\textwidth}   
            \centering 
            \includegraphics[width=0.9\textwidth]{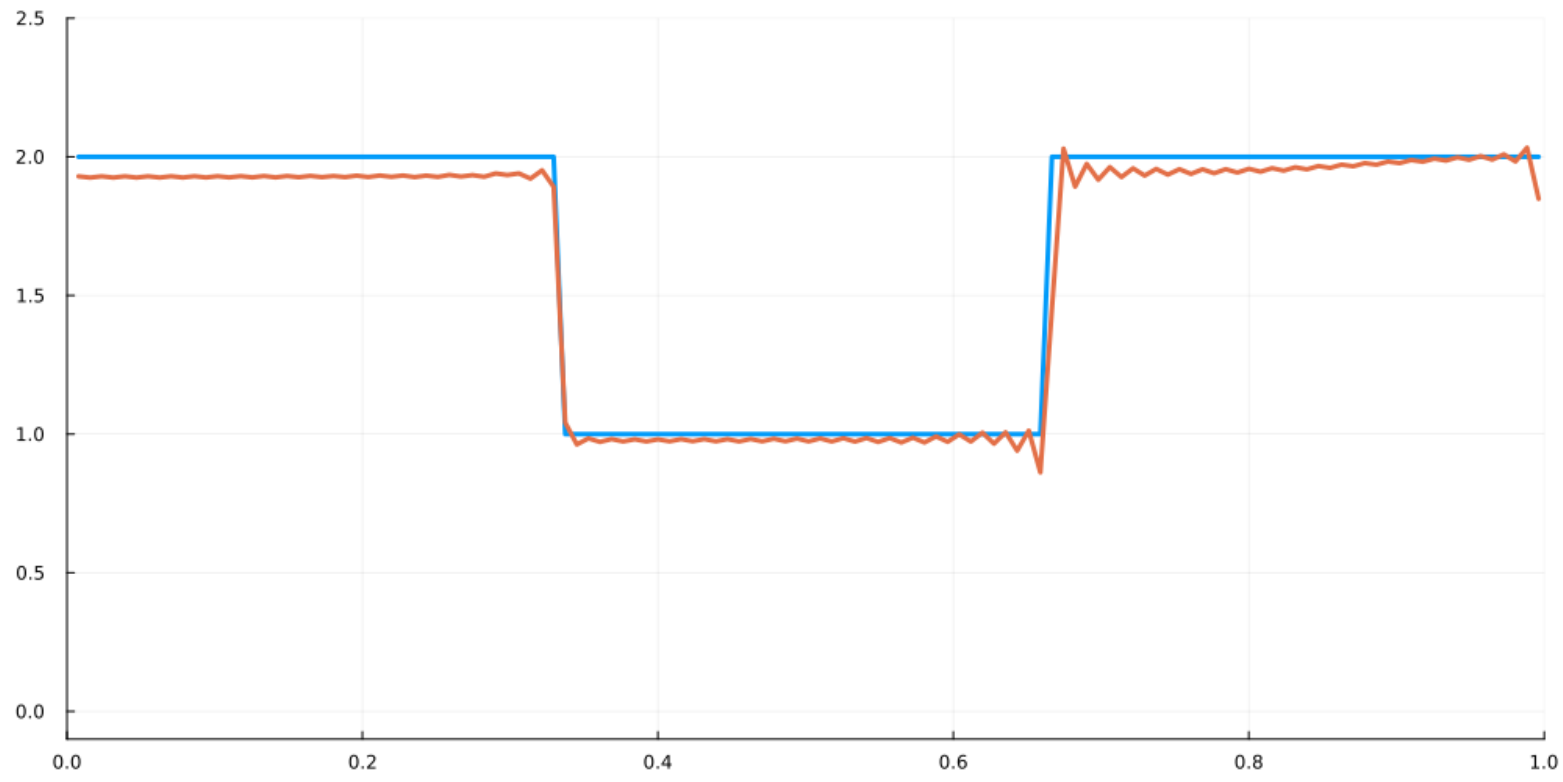}
            \caption{$\ka=-1000$, $\norm{\,\cdot\,}_{L^1(\IB^3)}=0.17218$.}
        \end{subfigure}
        \caption{Plots of $q(x)=2-\chi_{(\frac{1}{3},\frac{2}{3})}(\abs{x})$ (blue) and $\qb$ (orange).}
                        \vspace{17mm}
        \label{Fig:3}
\end{figure}

\clearpage

\begin{figure}[t]
    \centering
    \includegraphics[width=0.9\textwidth]{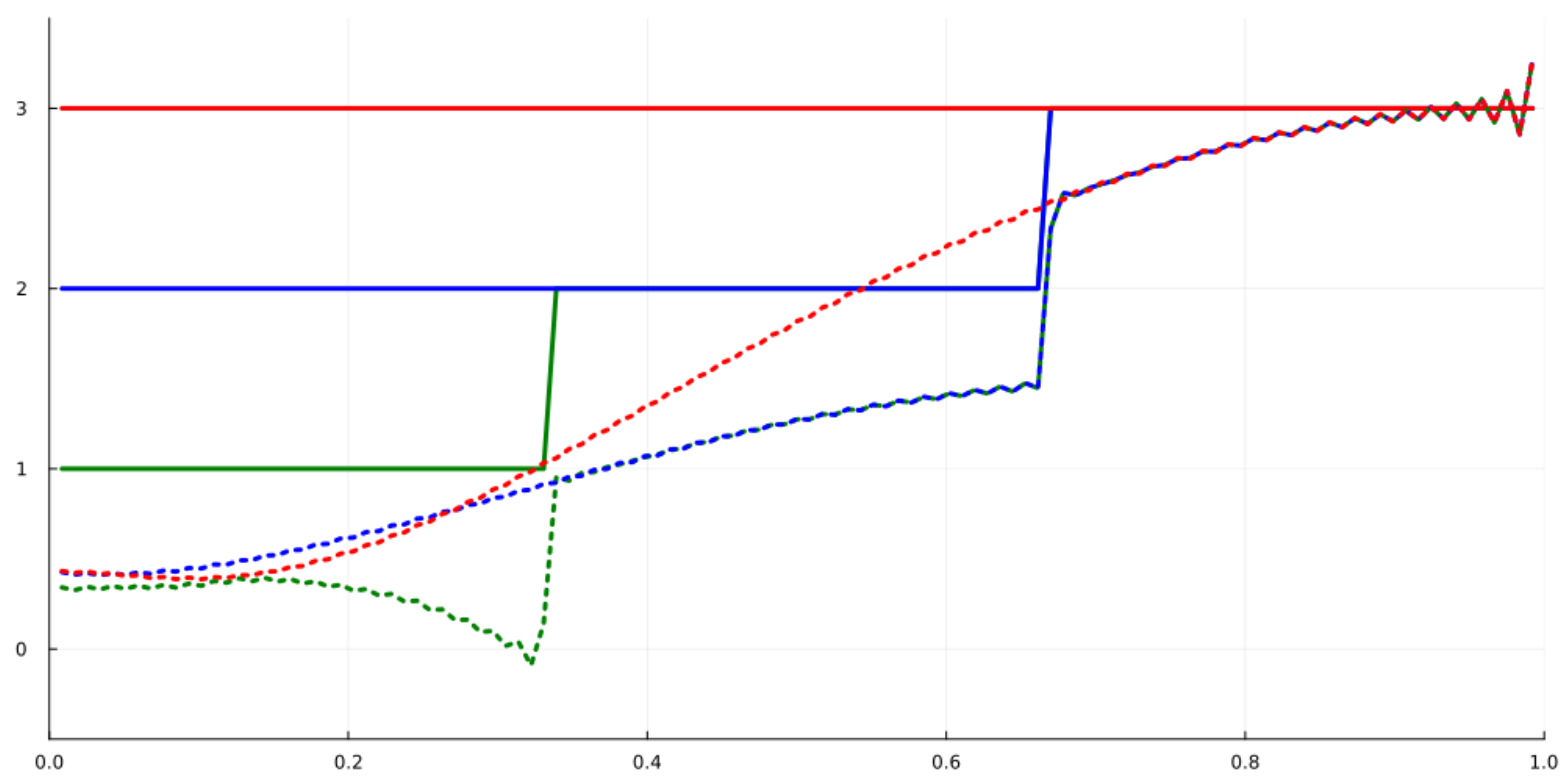}
    \caption{Plots of $q_i$ (solid) of $q_3(x)\coloneqq 3$, $q_2(x)\coloneqq q_3(x)-\chi_{(0,\frac{2}{3})}(\abs{x})$, $q_1(x)\coloneqq q_2(x)-\chi_{(0,\frac{1}{3})}(\abs{x})$ and their respective Born approximations $q_{i,\ka}^\mathrm{B}$ (dashed) at $\kappa=-1$.}
    \label{Fig:4}
\end{figure}

\subsection*{Acknowledgments}
This research has been supported by the grant PID2021-124195NB-C31 from the Agencia Estatal de Investigación (Spain). The authors thank Thierry Daudé and François Nicoleau for many insightful conversations on the inverse spectral theory of Schrödinger operators on the half-line.

\section{Radial Dirichlet-to-Neumann maps} \label{sec:dtn}

We start by deriving some simple properties of the DtN map associated to a radial potential $q$. Let $\kappa\in \IC$ and consider the Dirichlet problem:
\begin{equation}\label{e:kbvp}
    \left\{
\begin{array}{rll}
-\Delta v + qv -\kappa v &=0    \quad\text{ in }   \IB^d,\\
v|_{\IS^{d-1}}  &=f\in\cC^\infty(\IS^{d-1}).
\end{array}\right.
\end{equation}
This problem has a unique solution provided that $q\in L^p_\rad(\IB^d,\IR)$, with $p$ admissible, and $\kappa$ is not a Dirichlet eigenvalue of $-\Delta+q$. In addition, the radiality of $q$ implies that
\begin{equation*}
        \cE_{\ell}:=L^2_\rad(\IB^d)\otimes \H_\ell^d\subset L^2(\IB^d),
    \end{equation*}
is an invariant subspace of $-\Delta+q$. \footnote{One uses the standard identification $
(v\otimes f)(x)=v_\rad(|x|)f(\widehat{x})$, where $\widehat{x}:=x/|x|$ for every $x\in \R^d\setminus\{0\}$.
}
More precisely, the following result holds.
\begin{lemma}\label{l:sepvar}
    Let $p$ be admissible and $q\in L^p_\rad(\IB^d)$. Let $\ell\in\IN_0$ 
    \[
    \ka \in \IC \setminus \Spec_{H^1_0}((-\Delta+q)|_{\cE_{\ell}}),
    \]
    and $f\in\gH^d_\ell$. Then \eqref{e:kbvp} has a unique solution $v\in H^1(\IB^d)$, that is of the form
    \begin{equation}\label{e:sepvar}
    v(x)=b_\ell(|x|,\kappa)f(\widehat{x}), \qquad \forall x\in\IB^d.
    \end{equation} 
    In particular, when $q=0$, $\Spec_{H^1_0}(-\Delta|_{\cE_{\ell}})=\{j_{\ell+\nu_d,k}^2\,:\,k\in\IN\}$ where $(j_{\ell+\nu_d,k})_{k\in\IN}$ is the set of zeros of $J_{\ell+\nu_d}$ and
    \begin{equation}\label{e:defb}
    b_\ell(r,\ka)=\frac{\varphi_{\ell}(\sqrt{\kappa} r)}{\varphi_{\ell}(\sqrt{\kappa})},\qquad r\in\IR_+.
    \end{equation} 
    Moreover, when $\ka \leq 0 $ one has
    \begin{equation} \label{eq:b_ell_asymp}
         \int_{\IB^d}|\varphi_{\ell}(\sqrt{\kappa} r)|^2\,\d x \leq \frac{|\IS^{d-1}||\varphi_{\ell}(\sqrt{\kappa})|^2}{2(\ell+\nu_d+1)}.
    \end{equation}
\end{lemma}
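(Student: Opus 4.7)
The plan is to use separation of variables to reduce \eqref{e:kbvp} to a radial Sturm--Liouville problem, invoke the spectral assumption to solve it, identify the free radial factor with $\varphi_\ell$, and finally derive \eqref{eq:b_ell_asymp} from the power series of $\varphi_\ell$.

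First, writing the Laplacian in polar coordinates
$$\Delta = \partial_r^2 + \frac{d-1}{r}\partial_r + \frac{1}{r^2}\Delta_{\IS^{d-1}},$$
and using $\Delta_{\IS^{d-1}}f = -\ell(\ell+d-2)f$ for $f\in\gH^d_\ell$, I would show that $v(x) = b(|x|)f(\widehat{x})$ solves \eqref{e:kbvp} if and only if $b$ satisfies the radial ODE
$$-b'' - \frac{d-1}{r}b' + \left(\frac{\ell(\ell+d-2)}{r^2} + q - \ka\right)b = 0\qquad \text{on }(0,1),$$
with $b(1)=1$ and a regularity condition at $r=0$ coming from $v\in H^1(\IB^d)$. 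Since $q$ is radial, $-\Delta+q$ commutes with the $\SO(d)$-action and preserves each $\cE_\ell$; the assumption $\ka\notin\Spec_{H^1_0}((-\Delta+q)|_{\cE_\ell})$ then yields a unique $v\in\cE_\ell\cap H^1(\IB^d)$ with boundary datum $f$, which necessarily has the factored form \eqref{e:sepvar} (one extends $f$ smoothly into the ball as an element of $\cE_\ell$, subtracts it, and inverts the restricted resolvent on the resulting source term in $\cE_\ell$).

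When $q=0$, the reduced equation is the Bessel equation after the substitution $b(r)=r^{-\nu_d}u(r)$, so $\varphi_\ell(\sqrt{\ka}r)=J_{\ell+\nu_d}(\sqrt{\ka}r)/(\sqrt{\ka}r)^{\nu_d}$ is the unique (up to multiples) solution regular at the origin. Normalizing by $\varphi_\ell(\sqrt{\ka})$ to enforce $b(1)=1$ yields \eqref{e:defb}, which is well defined precisely when $\sqrt{\ka}$ is not a zero of $J_{\ell+\nu_d}$; this identifies $\Spec_{H^1_0}(-\Delta|_{\cE_\ell})$ with $\{j_{\ell+\nu_d,k}^2\}_{k\in\IN}$.

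For \eqref{eq:b_ell_asymp}, the key observation is that when $\ka\leq 0$ we have $(-1)^n(\sqrt{\ka})^{2n}=(-\ka)^n\geq 0$, so the series \eqref{eq:defvphi} evaluated at $z=\sqrt{\ka}r$ becomes
$$\varphi_\ell(\sqrt{\ka}r) = \frac{(\sqrt{\ka})^\ell r^\ell}{2^{\ell+\nu_d}}\sum_{n=0}^\infty \frac{(-\ka)^n r^{2n}}{n!\,\Gamma(\ell+\nu_d+n+1)\,4^n},$$
whose remaining factor is real, non-negative, and non-decreasing on $[0,1]$. Hence $|\varphi_\ell(\sqrt{\ka}r)|^2\leq r^{2\ell}|\varphi_\ell(\sqrt{\ka})|^2$, and integrating against $r^{d-1}$ gives the estimate, since $\int_0^1 r^{2\ell+d-1}\,\d r = 1/(2(\ell+\nu_d+1))$. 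The only delicate point in the whole argument is to properly encode the $H^1$-regularity at $r=0$ as the correct endpoint condition ($b(r)\sim r^\ell$ near the origin) for the singular ODE when $q\in L^p$ with $p\geq d/2$; this is standard Sturm--Liouville theory once the potential term is absorbed into a perturbation of the free operator.
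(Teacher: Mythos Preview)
Your proposal is correct. The overall strategy---separation of variables, reduction to a radial ODE, identification with Bessel functions when $q=0$, and the monotonicity bound $|\varphi_\ell(\sqrt{\ka}r)|\le r^\ell|\varphi_\ell(\sqrt{\ka})|$ for $\ka\le 0$---matches the paper's. Two points of execution differ. First, to obtain existence and uniqueness of $b_\ell$, the paper performs the logarithmic change of variables $w_\ell(t)=b_\ell(e^{-t},\ka)e^{-\nu_d t}$, reducing to a Schr\"odinger equation on the half-line with potential $Q(t)=e^{-2t}(q_0(e^{-t})-\ka)$, and then appeals to the Weyl--Titchmarsh theory developed later (Lemmas~4.1 and~4.2); this is done deliberately to set up the half-line machinery used throughout the rest of the paper. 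Your resolvent argument on $\cE_\ell$ is cleaner as a stand-alone proof of the lemma but does not feed into the subsequent analysis. Second, for \eqref{eq:b_ell_asymp} the paper rewrites the integrand in terms of modified Bessel functions and quotes the inequality $I_\nu(r)/I_\nu(s)<(r/s)^\nu$ from \cite{Paris84}, whereas you derive the same pointwise bound directly from the non-negativity of the power-series coefficients when $\ka\le 0$. Your argument is in fact a self-contained proof of the Paris inequality in this range, so it is more elementary; the cited inequality gives nothing extra here.
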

\begin{proof}
    After writing $q(x)=q_0(|x|)$ and changing \eqref{e:kbvp} to polar coordinates, one finds that, if a function in $H^1(\IB^d)$ of the form \eqref{e:sepvar} is a solution of \eqref{e:kbvp}, then $b_\ell$ must solve
    \begin{equation} \label{eq:radial_sol_b_libre}
    -\frac{1}{r^{d-1}}\frac{\d}{\d r}\left(r^{d-1} \frac{\d}{\d r} b_\ell(r,\kappa)\right)+ \left(\frac{\ell(\ell+d-2)}{r^2}+q_0(r)-\kappa\right)b_\ell(r,\kappa)=0,\quad r\in (0,1),
    \end{equation}
    with $b_\ell(1,\ka)=1$. The Liouville change of variables $w_\ell(t):=b_\ell(e^{-t},\ka)e^{-\nu_d t}$ (see, for instance, \cite[Chapter 10.9]{MR972977}) transforms \eqref{eq:radial_sol_b_libre} into a one-dimensional Schrödinger equation on the half-line:
    \[
    (-\partial_t^2+Q(t))w_\ell(t)=-(\ell+\nu_d)^2w_\ell(t),\qquad Q(t)=e^{-2 t}(q_0(e^{-t})-\ka),\quad t\in\IR_+,
    \]
    with $w_\ell(0)=1$, which is uniquely solvable in $L^2(\IR_+)$. See \Cref{lemma:DtN_mQ} for a proof of this and \Cref{lemma:m_function_ka} for a proof of \eqref{e:defb}. 
    We now prove \eqref{eq:b_ell_asymp}. First, note that this is an identity when $\ka=0$. For $\kappa <0$, \eqref{eq:defvphi} implies
    \begin{equation*}
        \int_{\IB^d}\left|\frac{\varphi_{\ell}(\sqrt{\kappa} |x|)}{\varphi_{\ell}(\sqrt{\kappa})}\right|^2\,\d x =|\IS^{d-1}|\int_0^1\left|\frac{I_{\ell+\nu_d}(\sqrt{|\kappa|}r)}{I_{\ell+\nu_d}(\sqrt{|\kappa|})}\right|^2 r \,\d r.
    \end{equation*}
    The conclusion follows from the identity
    \begin{equation*}
        \frac{I_\nu(r)}{I_\nu(s)}<\left(\frac{r}{s}\right)^\nu,
    \end{equation*}
    valid as soon as $\nu>-1/2$ and $0<r<s$ (see \cite{Paris84}). 
\end{proof}

\begin{remark}\label{rem:symb} \
    \begin{enumerate}[i)]
        \item By uniqueness of solutions of \eqref{e:kbvp} with Dirichlet condition $ b_\ell(1,\kappa)=1$
        one necessarily must have:
        \begin{equation}\label{eq:conjjj}
        b_\ell(\cdot,\cc\kappa)=\cc{b_\ell(\cdot,\kappa)}.
        \end{equation}
        \item  Identity \eqref{e:sepvar} implies 
        \begin{equation}\label{e:eigenvrough}
        \Lambda_{q,\kappa} f = \partial_r b_\ell(1,\kappa) f,\qquad \forall f\in\gH^d_\ell.
        \end{equation}
        In other words, the spectrum of $\Lambda_{q,\kappa}$ consists of the eigenvalues:
        \begin{equation*}
            \lambda_\ell[q,\ka]= \partial_r b_\ell(1,\kappa),\qquad \forall\ell\in\IN_0.   
        \end{equation*}
        \item  When $\kappa\in\IR$, it is possible to give a meaningful definition of the eigenvalues $\lambda_\ell[q,\kappa]$ for every $\ell$ large enough, even when $\kappa$ is a Dirichlet eigenvalue of $-\Delta+q$, see \cite[Definition~2.4]{Radial_Born}. In \cite[Lemma~2.1]{Radial_Born} it is shown that \eqref{e:kbvp} has a unique solution of the form \eqref{e:sepvar} that belongs to $H^1_{\rm loc}(\IB^d\setminus\{0\})\cap L^2_{\nu_d-1/2}(\IB^d)$.
    \end{enumerate}
\end{remark}

Integration by parts shows the following important result, known as Alessandrini's identity.
\begin{lemma}\label{lemma:alessss}
    Let $p$ be admissible and $\ka\in\IC\setminus \Spec_{H^1_0}(-\Delta)$. If $q\in L^p(\IB^d,\IR)$ is such that $\ka$ is not a Dirichlet eigenvalue of $-\Delta+q$ then, for every $f,g\in L^2(\IS^{d-1})$,
    \begin{equation}\label{e:alessandrini1}
    \hp{g}{(\Lambda_{q,\kappa} - \Lambda_{0,\kappa}) f}_{L^2(\IS^{d-1})} = \int_{\IB^d} q(x)\,\overline{u(x)}\,v(x)\,\d x,
    \end{equation}
    where
    \begin{equation}\label{e:alessandrini2}
        \left\{
        \begin{array}{rll}
        (-\Delta  +q-\kappa) v &=0,    &\text{ in }   \IB^d,\\
        v  &=f, &\text{ on }   \IS^{d-1},
        \end{array}\right.\qquad    \left\{
        \begin{array}{rll}
        -\Delta u -\overline{\kappa} u &=0,    &\text{ in }   \IB^d,\\
        u  &=g, &\text{ on }   \IS^{d-1}.
        \end{array}\right.     
    \end{equation}
\end{lemma}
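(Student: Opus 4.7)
The plan is to establish the identity via Green's second identity applied to the pair $(v,\bar u)$. The crucial observation is that, since $u$ solves $(-\Delta-\bar\ka)u=0$, the complex conjugate $\bar u$ satisfies $(-\Delta-\ka)\bar u=0$; thus both $v$ and $\bar u$ are solutions of Helmholtz-type equations at the same spectral parameter $\ka$, the only difference being that $v$ carries the potential $q$ while $\bar u$ is free. This pairing is what will make the $\ka$-dependence cancel in the bulk and isolate the desired potential term.

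First I would argue for smooth $f,g\in\cC^\infty(\IS^{d-1})$. Under the standing hypotheses, both boundary value problems in \eqref{e:alessandrini2} admit unique solutions $v,u\in H^1(\IB^d)$, and Sobolev embedding (valid because $p\ge d/2$) provides enough integrability to define all boundary traces and carry out the integrations by parts. Green's second identity gives
\begin{equation*}
    \int_{\IB^d}\left(\bar u\,\Delta v-v\,\Delta\bar u\right)\d x = \int_{\IS^{d-1}}\left(\bar u\,\partial_\nu v-v\,\partial_\nu\bar u\right)\d S.
\end{equation*}
Substituting $\Delta v=(q-\ka)v$ and $\Delta\bar u=-\ka\bar u$, the $\ka$-contributions cancel and the left-hand side collapses to $\int_{\IB^d} q\,\bar u\,v\,\d x$. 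On the right-hand side, using $\partial_\nu v|_{\IS^{d-1}}=\Lambda_{q,\ka}f$ and $\partial_\nu u|_{\IS^{d-1}}=\Lambda_{0,\bar\ka}g$, one obtains
\begin{equation*}
    \hp{g}{\Lambda_{q,\ka}f}_{L^2(\IS^{d-1})}-\hp{\Lambda_{0,\bar\ka}g}{f}_{L^2(\IS^{d-1})}.
\end{equation*}

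The remaining step is to replace $\hp{\Lambda_{0,\bar\ka}g}{f}$ by $\hp{g}{\Lambda_{0,\ka}f}$; equivalently, to prove the adjoint relation $\Lambda_{0,\bar\ka}^*=\Lambda_{0,\ka}$. This follows from a second application of Green's identity, this time to the pair of free solutions of $(-\Delta-\ka)w_1=0$ with $w_1|_{\IS^{d-1}}=f$ and $(-\Delta-\bar\ka)w_2=0$ with $w_2|_{\IS^{d-1}}=g$: the bulk integrands cancel exactly, and the resulting boundary identity is precisely the required adjoint relation. Combining these, \eqref{e:alessandrini1} holds for smooth data, and then extends to $f,g\in L^2(\IS^{d-1})$ by density, using the $L^2$-boundedness of $\Lambda_{q,\ka}-\Lambda_{0,\ka}$ recalled in the introduction together with the continuous dependence of $u,v\in H^1(\IB^d)$ on the data $g,f$, which ensures that the bulk integral $\int q\,\bar u\,v\,\d x$ depends continuously on $(f,g)$ once $q\,\bar u\,v\in L^1(\IB^d)$; the latter integrability follows from H\"older and Sobolev embedding under the admissibility condition on $p$. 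The main obstacle, essentially bookkeeping, is to verify that traces and integrations by parts remain valid under the weak regularity $q\in L^p$ with $p\ge d/2$; the conceptual core of the argument is the choice of pairing $(\bar u,v)$, without which the potential term could not be isolated.
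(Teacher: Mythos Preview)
Your proposal is correct and follows exactly the route the paper indicates: the paper simply states that the identity follows from ``integration by parts,'' and your Green's-identity argument with the conjugate pairing $(\bar u,v)$, together with the adjoint relation $\Lambda_{0,\bar\kappa}^*=\Lambda_{0,\kappa}$, is precisely that computation carried out in detail.
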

\begin{proof}
    First take $f,g\in H^{1/2}(\IS^{d-1})$. Green's formula, when applied to a solution $v$ of \eqref{e:alessandrini2} and any function $u\in H^{1}(\IB^d)$ with $u|_{\IS^{d-1}}=g$, gives:
    \begin{equation*}
        \hp{g}{\Lambda_{q,\kappa} f}_{L^2(\IS^{d-1})} = \int_{\IB^d} (q(x)-\ka)\,\overline{u(x)}\,v(x)\,\d x + \int_{\IB^d} \overline{\nabla u(x)}\,\nabla v(x)\,\d x.
    \end{equation*}
    This yields, as soon as $q$ is real-valued,
    \begin{equation*}
        \hp{g}{\Lambda_{q,\kappa} f}_{L^2(\IS^{d-1})}=\hp{\Lambda_{q,\ol{\kappa}}g}{ f}_{L^2(\IS^{d-1})}.
    \end{equation*}
    Formula \eqref{e:alessandrini1} then follows by particularizing those identities to $q=0$ and subtracting. In addition,  \eqref{e:alessandrini1} shows that $\Lambda_{q,\kappa} - \Lambda_{0,\kappa}$ is bounded on $L^2(\IS^{d-1})$: for some $C=C_{q,d}>0$, 
    \begin{equation*}
        \hp{g}{(\Lambda_{q,\kappa} - \Lambda_{0,\kappa}) f}_{L^2(\IS^{d-1})} \leq \norm{q}_{L^{\infty}(\IB^d)}\norm{u}_{H^{1/2}(\IB^d)}\norm{v}_{H^{1/2}(\IB^d)}\leq C\norm{f}_{L^2(\IS^{d-1}}\norm{g}_{L^2(\IS^{d-1})}
    \end{equation*}
    and therefore \eqref{e:alessandrini1} holds for $f,g\in L^2(\IS^{d-1})$ as claimed.
\end{proof}
Using \Cref{l:sepvar} and \Cref{lemma:alessss}, it is not difficult to compute the Fréchet differential of $\Phi^\ka$ at $q=0$.
\begin{proposition} \label{prop:frechet_der}
Let $q\in L^{\infty}_\rad(\IB^d,\R)$ and $\kappa\in \IC \setminus \Spec_{H^1_0}(-\Delta)$.
The Fréchet differential $\d\Phi^\ka_0$ applied to $q$ is a bounded operator $\d\Phi^\ka_0(q): L^2(\IS^{d-1}) \To L^2(\IS^{d-1})$ invariant under rotations that satisfies:
\begin{equation*}
    \varphi_\ell(\sqrt{\ka})^2\d\Phi^\ka_0 (q)|_{\gH^d_\ell} = \sigma_\ell[q,\ka] \Id_{\gH^d_\ell},   \qquad \forall \ell \in \IN_0.
\end{equation*}
\end{proposition}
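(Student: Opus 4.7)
The plan is to linearize $\Phi^\ka$ at the origin by combining Alessandrini's identity (\Cref{lemma:alessss}) with the separation of variables provided by \Cref{l:sepvar}, and then read off the action on spherical harmonics.

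First I would establish an integral formula for the differential. Since $\ka\notin\Spec_{H^1_0}(-\Delta)$, perturbation theory for the self-adjoint family $-\Delta+tq$ ensures that $\ka\notin\Spec_{H^1_0}(-\Delta+tq)$ with uniformly bounded resolvents for $|t|$ small. Applying \Cref{lemma:alessss} to $tq$ and to $f,g\in\cC^\infty(\IS^{d-1})$ gives
\[
\hp{g}{\Phi^\ka(tq)f}_{L^2(\IS^{d-1})} = t\int_{\IB^d} q(x)\,\overline{u(x)}\,v_t(x)\,\d x,
\]
where $u$ solves the free conjugate problem in \eqref{e:alessandrini2} with boundary data $g$ (independent of $t$) and $v_t$ solves the problem with potential $tq$ and boundary $f$. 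The difference $v_t-v_0$ solves $(-\Delta+tq-\ka)(v_t-v_0)=-tq\,v_0$ with zero Dirichlet data, so the uniform resolvent bound yields $\|v_t-v_0\|_{L^2(\IB^d)}=O(t\,\|q\|_{L^\infty})$; dividing by $t$ and letting $t\to 0$ identifies
\[
\hp{g}{\d\Phi^\ka_0(q)f}_{L^2(\IS^{d-1})}=\int_{\IB^d}q(x)\,\overline{u(x)}\,v_0(x)\,\d x.
\]

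Next I would specialize to $f\in\gH^d_\ell$ and $g\in\gH^d_{\ell'}$. By \Cref{l:sepvar} together with the conjugation identity \eqref{eq:conjjj}, one has $v_0(x)=b_\ell(|x|,\ka)f(\widehat x)$ and $\overline{u(x)}=b_{\ell'}(|x|,\ka)\overline{g(\widehat x)}$, so the bilinear form factorizes as
\[
\hp{g}{\d\Phi^\ka_0(q)f}_{L^2(\IS^{d-1})} = \Bigl(\int_{\IS^{d-1}}\overline{g}\,f\,\d\omega\Bigr)\int_0^1 q_0(r)\,b_\ell(r,\ka)\,b_{\ell'}(r,\ka)\,r^{d-1}\,\d r.
\]
Orthogonality of spherical harmonics of distinct degrees makes the angular factor vanish when $\ell\neq\ell'$, so $\d\Phi^\ka_0(q)$ preserves each $\gH^d_\ell$. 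For $\ell=\ell'$, substituting $b_\ell(r,\ka)=\varphi_\ell(\sqrt\ka r)/\varphi_\ell(\sqrt\ka)$ and recognizing the definition \eqref{e:moment} of $\sigma_\ell[q,\ka]$ gives the announced identity $\varphi_\ell(\sqrt\ka)^2\,\d\Phi^\ka_0(q)|_{\gH^d_\ell}=\sigma_\ell[q,\ka]\,\Id_{\gH^d_\ell}$. Rotational invariance is then automatic, since $\SO(d)$ acts irreducibly on each eigenspace $\gH^d_\ell$.

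Finally, for boundedness on $L^2(\IS^{d-1})$ it suffices to control the eigenvalues $\varphi_\ell(\sqrt\ka)^{-2}\sigma_\ell[q,\ka]$ as $\ell\to\infty$. The power series \eqref{eq:defvphi} yields $\varphi_\ell(\sqrt\ka\,r)/\varphi_\ell(\sqrt\ka)=r^\ell(1+o(1))$ uniformly for $r\in[0,1]$, so the radial integral above is bounded (in fact tends to $0$), making $\d\Phi^\ka_0(q)$ not merely bounded but compact on $L^2(\IS^{d-1})$. I expect the main technical hurdle to be the uniform elliptic estimate on $v_t-v_0$ in Step~1 for $q$ only in $L^\infty_\rad$; this is handled by the uniform resolvent bound coming from $\ka\notin\Spec_{H^1_0}(-\Delta)$ together with analytic perturbation theory for the self-adjoint family $-\Delta+tq$.
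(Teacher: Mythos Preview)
Your proposal is correct and follows essentially the same route as the paper: differentiate Alessandrini's identity at $t=0$ to obtain the bilinear form $\int_{\IB^d}q\,\overline{u}\,v_0$, then plug in the explicit radial solutions from \Cref{l:sepvar} and \eqref{eq:conjjj} on spherical harmonics to read off the eigenvalues. The only noteworthy differences are that you justify the differentiability step more explicitly via the resolvent estimate on $v_t-v_0$ (the paper simply asserts the formula), and that you prove boundedness by eigenvalue asymptotics rather than directly from the integral representation; the latter is slightly roundabout since the bilinear form $\int_{\IB^d}q\,\overline{u}\,v_0$ already controls $\|\d\Phi^\ka_0(q)\|$ via Cauchy--Schwarz and the continuity of $f\mapsto v_0$, $g\mapsto u$ in $L^2(\IB^d)$.
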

\begin{proof}
    From \Cref{lemma:alessss} one can check that
    \begin{equation*}
    \br{g,\d\Phi^\ka_0 (q) f}_{L^2(\IS^{d-1})} 
    = \int_{\IB^d}q (x)\,\overline{u(x)}\,w(x)\,\d x, 
    \end{equation*}
    where $u$ is the same as in \eqref{e:alessandrini2}, and $w$ satisfies that $ (-\Delta  -\kappa) w =0$ on $\IB^d$ and $w|_{\IS^{d-1}} = f$. Note that the previous expression ensures that $\d\Phi^\ka_0(q)$ is a bounded linear operator on $L^2(\IS^{d-1})$.
    Taking $f=g= Y_\ell$ with $Y_\ell \in \H_\ell^d$ normalized in $L^2(\IS^{d-1})$, by \Cref{l:sepvar} and \eqref{eq:conjjj} we find that $w(x) = b_\ell(|x|,\ka)Y_\ell(\hat{x})$ and $\ol{u(x)} = b_\ell(|x|,\ka)\ol{Y_\ell(\hat{x})}$, where $b_\ell$ satisfies \eqref{e:defb}, since we are in the case $q=0$. Therefore, we have
     \begin{equation*}
          \br{Y_\ell,\d\Phi^\ka_0 (q) Y_\ell}_{L^2(\IS^{d-1})} 
          =\int_{\IB^d}q(x)\overline{ b_{\ell}(|x|,\ol{\ka}) }  b_{\ell}(|x|,\ka) |Y_\ell(\hat{x})|^2 \, \d x  = \frac{\sigma_\ell[q,\ka]}{ \varphi_\ell(\sqrt{\ka})^2}  ,
     \end{equation*}
    by \eqref{e:moment}, since $q$ is radial and the integral in the angular variable is $1$. 
\end{proof}

In the next result we show that asymptotically as $\ka \to -\infty$ the spectrum of $\Lambda_{q,\ka}-\Lambda_{0,\ka}$ converges uniformly to the rescaled moments of $q$.
\begin{proposition}\label{prop:spectrum}
    Let $q\in L^\infty_{\rm rad}(\IB^d;\IR)$.  For every $\ka\leq -\norm{q}_{L^\infty(\IB^d)}$ and $\ell\in\N_0$, we have
    \begin{equation*}
    \abs{\lambda_\ell[q,\kappa]-\lambda_\ell[0,\kappa]-\varphi_\ell(\sqrt{\ka})^{-2}\sigma_\ell[q,\kappa]}\leq\frac{\norm{q}^2_{L^\infty(\IB^d)}}{2(\ell+\nu_d)((\ell+\nu_d)^2-\norm{q}_{L^\infty(\IB^d)}-\kappa)} .
\end{equation*}
\end{proposition}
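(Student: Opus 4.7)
The plan is to apply Alessandrini's identity to reduce the quantity to a one-dimensional integral, perform the change of variables of \Cref{l:sepvar} that turns the radial ODE into a Schrödinger equation on $[0,\infty)$, and then use an energy identity combined with Cauchy--Schwarz.

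First, I would apply \Cref{lemma:alessss} with $f=g=Y_\ell$ a normalized element of $\gH^d_\ell$, using \Cref{l:sepvar}: since $q$ is real and radial and $\ka\in\R$, the radial profiles $b_\ell^q, b_\ell^0$ are real, and one gets
\[
\lambda_\ell[q,\ka]-\lambda_\ell[0,\ka]=\int_0^1 q_0(r)\,b_\ell^q(r,\ka)\,b_\ell^0(r,\ka)\,r^{d-1}\,\d{r}.
\]
Comparing with \eqref{e:defb} and \eqref{e:moment} one has $\varphi_\ell(\sqrt\ka)^{-2}\sigma_\ell[q,\ka]=\int_0^1 q_0\,(b_\ell^0)^2\,r^{d-1}\,\d{r}$, so the quantity to bound becomes
\[
\mathcal{E}:=\int_0^1 q_0(r)\bigl(b_\ell^q-b_\ell^0\bigr)(r,\ka)\,b_\ell^0(r,\ka)\,r^{d-1}\,\d{r}.
\]

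Next, I would carry out the substitution $t=-\log r$, $w(t):=b_\ell(e^{-t},\ka)\,e^{-\nu_d t}$ from \Cref{l:sepvar}. The functions $w^q,w^0$ are the Weyl $L^2$ solutions of $-w''+\tilde Q_\bullet w=0$ on $[0,\infty)$ with $w(0)=1$, where $\tilde Q_q(t)=\alpha^2+e^{-2t}(q_0(e^{-t})-\ka)$ with $\alpha:=\ell+\nu_d$, and analogously $\tilde Q_0$. The hypothesis $\ka\leq-\|q\|_{L^\infty(\IB^d)}$ gives $\tilde Q_q(t)\geq\alpha^2+\beta e^{-2t}\geq\alpha^2$ for $\beta:=-\ka-\|q\|_{L^\infty(\IB^d)}\geq 0$. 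Setting $\tilde h:=w^q-w^0$, one checks that $-\tilde h''+\tilde Q_q\tilde h=-q_0(e^{-t})\,e^{-2t}\,w^0$ with $\tilde h(0)=0$ and $\tilde h\in L^2(\IR_+)$, while in these variables $\mathcal{E}=\int_0^\infty q_0(e^{-t})\,e^{-2t}\,w^0(t)\,\tilde h(t)\,\d{t}$. Multiplying the ODE for $\tilde h$ by $\tilde h$ and integrating by parts (boundary terms vanish by $\tilde h(0)=0$ together with the $L^2$/decay at infinity) yields the energy identity $|\mathcal{E}|=\int_0^\infty[(\tilde h')^2+\tilde Q_q\,\tilde h^2]\,\d{t}$, and then Cauchy--Schwarz applied to $\mathcal{E}$ weighted by $\tilde Q_q$ gives the key bound
\[
|\mathcal{E}|\leq\int_0^\infty\frac{q_0(e^{-t})^2\,e^{-4t}\,w^0(t)^2}{\tilde Q_q(t)}\,\d{t}.
\]

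Finally, I would insert the pointwise estimates $|q_0|\leq\|q\|_{L^\infty(\IB^d)}$, $\tilde Q_q\geq\alpha^2+\beta e^{-2t}$, and the Sturm-type bound $0<w^0(t)\leq e^{-\alpha t}$, which follows from $\tilde Q_0\geq\alpha^2$ by noticing that $t\mapsto e^{-\alpha t}(w^{0\prime}+\alpha w^0)$ has nonnegative derivative and vanishing limit at infinity, hence is nonpositive, giving $(\log w^0)'\leq-\alpha$. The change of variable $u=e^{-2t}$ then reduces matters to the elementary estimate
\[
\int_0^1\frac{u^{\alpha+1}}{\alpha^2+\beta u}\,\d{u}\leq\frac{1}{(\alpha+1)(\alpha^2+\beta)}\leq\frac{1}{\alpha(\alpha^2+\beta)},
\]
which follows from a single integration by parts using the antiderivative $u^{\alpha+2}/((\alpha+2)(\alpha^2+\beta u))$ combined with the crude bound $u/(\alpha^2+\beta u)\leq 1/\beta$. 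Collecting the factor $\|q\|_{L^\infty(\IB^d)}^2/2$ that arises from the substitution yields the claimed inequality. The main technical obstacle is verifying the vanishing of the boundary term at infinity in the energy identity and the pointwise decay $w^0\leq e^{-\alpha t}$, both of which are standard consequences of the $L^2$ normalization and strict positivity of the Weyl solution.
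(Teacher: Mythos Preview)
Your proof is correct and reaches exactly the stated bound, but it proceeds along a genuinely different line from the paper. The paper stays on the ball: after isolating the remainder $h=v-b_\ell^0 Y_\ell$ it writes $h=-\cR_q(\kappa)(qg_\kappa)$ with $g_\kappa=b_\ell^0 Y_\ell$, bounds the resolvent restricted to $\cE_\ell$ by $1/\mathrm{dist}(\kappa,\Spec(-\Delta+q)|_{\cE_\ell})$, and then controls $\|g_\kappa\|_{L^2}^2$ via the Bessel inequality \eqref{eq:b_ell_asymp} (which rests on Paris' inequality $I_\nu(r)/I_\nu(s)<(r/s)^\nu$) together with the lower bound $j_{\ell+\nu_d,1}>\ell+\nu_d$ on the first Bessel zero. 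Your argument instead passes to the half-line, replaces the resolvent bound by the variational/energy identity $|\mathcal E|=\int[(\tilde h')^2+\tilde Q_q\tilde h^2]$, sharpens Cauchy--Schwarz by weighting with $\tilde Q_q$, and substitutes Paris' inequality by the elementary Sturm-type pointwise bound $w^0\le e^{-\alpha t}$. Each step has a clear counterpart: your weighted Cauchy--Schwarz plays the role of the spectral resolvent bound, and your pointwise decay plays the role of \eqref{eq:b_ell_asymp}. Your route is more self-contained (no external Bessel-function inequalities or zero bounds), while the paper's is shorter and more functional-analytic.

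Two small remarks on presentation rather than substance. First, the function $u^{\alpha+2}/((\alpha+2)(\alpha^2+\beta u))$ is not literally an antiderivative of the integrand; what you are really doing is differentiating it, using $\frac{\beta u}{\alpha^2+\beta u}\le 1$ (equivalent to your ``crude bound'') on the remainder, and solving the resulting self-referential inequality $I\le \frac{1}{(\alpha+2)(\alpha^2+\beta)}+\frac{I}{\alpha+2}$ for $I$. Second, the case $\alpha=\ell+\nu_d=0$ (i.e.\ $d=2$, $\ell=0$) sits outside your argument, but the stated bound is vacuous there, so no work is needed.
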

\begin{proof}

Let $Y_\ell \in \H_\ell^d$ be normalized. We apply \eqref{e:alessandrini1}, \eqref{e:alessandrini2} with $f=g=Y_\ell$, so that $\ol{u(x)} = b_\ell(|x|,\ka)\ol{Y_\ell(\hat{x})}$ by \Cref{l:sepvar} and \eqref{eq:conjjj}, since now $\ka= \ol{\ka}$. Thus we have
\begin{align*}
    \lambda_\ell[q,\kappa]-\lambda_\ell[0,\kappa]&=\hp{Y_\ell}{(\Lambda_{q,\kappa}-\Lambda_{0,\kappa})Y_\ell}_{L^2(\Sp^{d-1})}=\int_{\IB^d}q(x) b_\ell(|x|,\ka) \ol{Y_\ell(\widehat{x})}v(x)\,\d x,
\end{align*}
where $v$ satisfies \eqref{e:alessandrini2} with $f=Y_\ell$.
Let $h(x):=v(x)-b_\ell(|x|,\ka) Y_\ell(\widehat{x})$. Inserting this in the previous identity and using that $Y_\ell$ is normalized, yields
\begin{align*}
    \lambda_\ell[q,\kappa]-\lambda_\ell[0,\kappa]
    = \varphi_{\ell}(\sqrt{\kappa})^{-2}\sigma_\ell[q,\kappa]+\int_{\IB^d} q(x)\,b_\ell(|x|,\ka)\cc{Y_\ell(\widehat{x})}h(x)\,\d x.
\end{align*}
The function $h$ satisfies
\begin{equation*}
\begin{cases}
(-\Delta+q(x)-\kappa)h(x)=-q(x)b_\ell(|x|,\ka)Y_\ell(\widehat{x}),&x\in \IB^d,\\
h\vert_{\Sp^{d-1}}=0.
\end{cases}
\end{equation*}
Denoting by $\cR_q(\kappa)$ the Dirichlet resolvent $(-\Delta+q-\ka)^{-1}$, we can write $h=-\cR_q(\kappa)(qg_\kappa)$ with $g_\kappa(x):=b_\ell(|x|,\ka)Y_\ell(\widehat{x})$. Since $-\Delta+q$ and $\cR_q(\kappa)$ leave invariant the subspace $\cE_\ell$ we have
\begin{equation*}
     \|\cR_q(\kappa)|_{\cE_\ell}\|_{\cL(L^2(\IB^d))}\leq\frac{1}{\mathrm{dist}(\kappa, \Spec(-\Delta+q)|_{\cE_\ell})},
\end{equation*}
Using that $qg_{\kappa}\in\cE_\ell$ we can estimate
\begin{align*}
    \abs{\lambda_\ell[q,\kappa]-\lambda_\ell[0,\kappa]-\varphi_{\ell}(\sqrt{\kappa})^{-2}\sigma_\ell[q,\kappa]}&=\vert\hp{qg_{\kappa}}{\cR_q(\kappa)(qg_\kappa)}_{L^2(\IB^d)}\vert\\
    &\leq \frac{\norm{q g_\kappa}_{L^2(\IB^d)}^2}{\mathrm{dist}(\kappa, \Spec(-\Delta+q)|_{\cE_\ell})}\\
    &\leq \frac{ \norm{q}^2_{L^\infty(\IB^d)}\int_{\IB^d}|\varphi_{\ell}(\sqrt{\kappa}|x|)|^2\d x}{|\varphi_{\ell}(\sqrt{\kappa})|^{2}\abs{\IS^{d-1}}\mathrm{dist}(\kappa, \Spec(-\Delta+q)|_{\cE_\ell})}.
\end{align*}
Clearly, $\Spec_{H^1_0}((-\Delta+q)|_{\cE_\ell})\subseteq \Spec_{H^1_0}(-\Delta|_{\cE_\ell})+[-\norm{q}_{L^\infty(\IB^d)},\norm{q}_{L^\infty(\IB^d)}]$ which implies the bound:
\[
\mathrm{dist}(\kappa, \Spec(-\Delta+q)|_{\cE_\ell})\geq j_{\ell+\nu_d,1}^2-\norm{q}_{L^\infty(\IB^d)}-\ka > (\ell+\nu_d)^2-\norm{q}_{L^\infty(\IB^d)}-\ka,
\]
the last estimate following from the inequality $\nu<j_{\nu,1}$ on the first zero of $J_\nu$ for $\nu>0$, \cite[Equation (2.4)]{Laforgia_bessel_zero_low_bound_83}. The result then follows taking \eqref{eq:b_ell_asymp} into account.
\end{proof}

\section{Fourier representation formulas}\label{sec:proofFourier}

In this section we prove an explicit formula to reconstruct a distribution from its moments. The result below can be viewed as an explicit formula for $(\d\Phi^\ka_0)^{-1}|_{\cE_\rad'(\R^d)}$. In particular, it implies that, as soon as $\qb\in\cB_d$ exists, it is uniquely determined by \eqref{eq:moment_prob_formal} in $\cB_d$. 
\begin{theorem}\label{main-thm:Fourier Radial}
Let $f\in \cE_\rad'(\R^d)$ and let $\kappa\in\IC\setminus \{0\}$. Then for $\xi\in\R^d$ we have
\begin{equation}\label{eq:fourierrep}
    \F f(\xi) =
    (2\pi)^d \sum_{\ell=0}^\infty \sigma_\ell[f,\kappa] Z_{\ell,d}\left(1-\frac{\abs{\xi}^2}{2\kappa}\right),
\end{equation}
where the series are absolutely convergent.
\end{theorem}
Note that taking the limit $\ka\to 0$ in \eqref{eq:fourierrep} recovers the representation for $\kappa=0$ that was obtained in \cite{BCMM_2022_Born, Radial_Born}, see \eqref{eq:limit_res}. 
\begin{proof}
We first assume that $f\in L^1_\rad(\IR^d)$ has compact support. Then we can choose $R>0$ such that $f$ is identically 0 outside $R\IB^d$.
Fix $\xi\in \R^d$ and let $\zeta_1,\zeta_2\in\C^d$ be such that 
\begin{equation} \label{eq:z_1_z_2_cond}
    \zeta_1+\zeta_2=-i\xi,\qquad \zeta_1\cdot\zeta_1 =\zeta_2\cdot\zeta_2=-\kappa.
\end{equation}
Writing $f(x)=f_0(|x|)$, we have
\begin{align*}
    \cF f(\xi)&=\int_0^R f_0(r) r^{d-1}\hp{\cc{e_{r\zeta_1}}}{e_{r\zeta_2}}_{L^2(\Sp^{d-1})}\d{r}\\
    &=\int_0^R f_0(r) r^{d-1}\left(\sum_{\ell=0}^\infty\hp{\mathcal{P}_{\ell,d}\cc{e_{r\zeta_1}}}{\mathcal{P}_{\ell,d}e_{r\zeta_2}}_{L^2(\Sp^{d-1})}\right)\d{r}\\
    &=\sum_{\ell=0}^\infty \int_0^R f_0(r) r^{d-1}\hp{\cc{\mathcal{P}_{\ell,d}e_{r\zeta_1}}}{\mathcal{P}_{\ell,d}e_{r\zeta_2}}_{L^2(\Sp^{d-1})}\d{r},
\end{align*}
where $\cP_{\ell,d}$ denotes the orthogonal projection of $L^2(\IS^{d-1})$ onto $\gH_\ell^d$. The exchange of series and integral is justified by Fubini's Theorem and the bound:
\begin{align*}
    \sup_{r\in[0,R]}\sum_{\ell=0}^\infty\abs{\hp{\mathcal{P}_{\ell,d}\cc{e_{r\zeta_1}}}{\mathcal{P}_{\ell,d}e_{r\zeta_2}}_{L^2(\Sp^{d-1})}}\leq \sup_{r\in[0,R]}\norm{e_{r\zeta_1}}_{L^2(\Sp^{d-1})}\norm{e_{r\zeta_2}}_{L^2(\Sp^{d-1})}<\infty.
\end{align*}
Then, \eqref{eq:fourierrep} follows for $\cE_\rad'(\R^d)\cap L^1_\rad(\IB^d)$  from
 $ 2(\zeta_1\cdot\zeta_2)=-(\abs{\xi}^2-2\kappa)$, and the identity
\begin{equation*}
   \hp{\cc{\mathcal{P}_{\ell,d}e_{r\zeta_1}}}{\mathcal{P}_{\ell,d}e_{r\zeta_2}}_{L^2(\Sp^{d-1})} = (2\pi)^d   \varphi_{\ka,\ell}\left(\sqrt{\ka} r\right)^{2}Z_{\ell,d}\left(\frac{\zeta_1\cdot\zeta_2}{\kappa}\right),
\end{equation*}
which is a consequence of  \Cref{prop:mainradial}.

We now prove \eqref{eq:fourierrep} for $f\in \cE_\rad'(\R^d)$ by a density argument. Let $N\in\N_0$ be the order of $f$ as a distribution, and let $R>0$ be such that $\supp f\subseteq R\IB^d$.
Also, choose any $\eta \in \cC^\infty_{c,\rad}(\IR^d,\IR_+)$ with $\int_{\IR^d} \eta (x)\d{x}=1$ and construct the approximate identity $\eta_\varepsilon(x)\coloneqq\epsilon^{-d}\eta(x/\epsilon)$ for $\epsilon\in(0,1)$. Then $f * \eta_\varepsilon\in \cC^\infty_{c,\rad}(\IR^d)$ with 
\[\supp (f* \eta_\varepsilon)\subseteq \supp f+ \epsilon\supp \eta\subseteq \supp f + \epsilon\left(\max_{x\in \supp \eta}\abs{x}\right)\ol{\IB^d},\]
and $\displaystyle\lim_{\epsilon\to0}f* \eta_\varepsilon=f$ in $\cE'(\R^d)$. Therefore, 
\begin{align*}
    \cF f(\xi)&=\br{f,e_{-i\xi}}_{\cE'\times\cC^\infty}=\lim_{\epsilon\to0}\br{f* \eta_\varepsilon,e_{-i\xi}}_{\cE'\times\cC^\infty}=\lim_{\epsilon\to0} \cF (f* \eta_\varepsilon)(\xi)\\
    &= \lim_{\epsilon\to0} (2\pi)^d \sum_{\ell=0}^\infty \sigma_\ell[f* \eta_\varepsilon,\kappa] Z_{\ell,d}\left(1-\frac{\abs{\xi}^2}{2\kappa}\right).
\end{align*}
We wish to apply the dominated convergence theorem to conclude the proof. To this end we bound the terms of the series separately.
For the moments $\sigma_\ell[f* \eta_\varepsilon,\kappa]$ we have
\begin{multline*}
        \abs{\sigma_\ell[f* \eta_\varepsilon,\kappa]}
    =\frac{1}{\abs{\IS^{d-1}}}\abs{\br{f,\eta_\varepsilon* \varphi_{\ka,\ell}^2}_{\cE'\times\cC^\infty}}
    \leq \frac{C_{f,R}}{\abs{\IS^{d-1}}}\norm{\eta_\varepsilon* \varphi_{\ka,\ell}^2}_{\cC^N(R \ol{\IB^d})} \\
    \leq C_{f,R,d}\norm{\varphi_{\ka,\ell}^2}_{\cC^N(R \ol{\IB^d})}.
\end{multline*}
From the definition of $\varphi_\ell$ in \eqref{eq:defvphi} immediately follows that $\IC^d\ni z\longmapsto \varphi_\ell(\sqrt{\kappa (z\cdot z)})^2$ is an entire function bounded by
$\abs{z}^{2\ell}e^{\abs{\kappa}\abs{z}^2}$. Hence, a Cauchy estimate with poly-discs of unit radius gives 
\begin{equation*}
    \norm{\varphi_{\ka,\ell}^2}_{\cC^N(R \ol{\IB^d})}\leq N! \left(R+d\right)^{2\ell}e^{\abs{\kappa}\left(R+d\right)^{2}}=C_{f,R,\kappa,d}(R+d)^{2\ell}.
\end{equation*}
The definition of the Legendre polynomial $P_{\ell,d}$ (see \Cref{app:sh}) implies that the complex phases of all its terms align when we evaluate it at $i\IR$; therefore
\begin{equation*}
    \abs{Z_{\ell,d}(z)}=\frac{N_{\ell,d}}{\abs{\IS^{d-1}}}\abs{P_{\ell,d}(z)}\leq C_d \ell^{d-2}\abs{P_{\ell,d}(i\abs{z})}\leq  C_d \ell^{d-2}(\abs{z}+1)^\ell\abs{P_{\ell,d}(i)},\qquad z\in\IC.
\end{equation*}
To estimate $\abs{P_{\ell,d}(i)}$ we use again its definition from \Cref{app:sh} and the Cauchy integral formula with $\gamma=\{z\in\IC : \abs{z-i}=1\}$ to obtain
\begin{align*}
    \abs{P_{\ell,d}(i)}&=\frac{2^{-\frac{d-3}{2}}R_\ell \ell!}{2\pi}\abs{\int_\gamma\frac{(1-z^2)^{\ell+\frac{d-3}{2}}}{(z-i)^{\ell+1}}\d{z} }\\
    &\leq 2^{-\frac{d-3}{2}}R_\ell \ell!(1+2^2)^{\ell+\frac{d-3}{2}}=\frac{\Gamma(\frac{d-1}{2})\ell!}{\Gamma(\ell+\frac{d-1}{2})}\left(\frac{5}{2}\right)^{\ell+\frac{d-3}{2}}\leq C_d\,\ell\left(\frac{5}{2}\right)^{\ell}.
\end{align*}
Putting all these bounds together, we see that we can indeed apply the dominated convergence theorem in order to exchange limit and series. The proof is then finished by the convergence of $f* \eta_\varepsilon$ to $f$ in $\cE'(\R^d)$.
\end{proof}

\section{Weyl-Titchmarsh functions and the Born approximation}

\subsection{Weyl-Titchmarsh functions and DtN maps}Let us start by providing a description of $\Lambda_{q,\ka}$ that is well-suited for our purposes. We will relate $\Lambda_{q,\ka}$ to the Dirichlet-to-Neumann map of a one-dimensional Schrödinger operator on the half-line:
\begin{equation}   \label{eq:schrodinger_prob}
    \left\{\begin{array}{rll}
    (-\partial_t^2   + Q(t) )  v_z(t) &= -z^2 v_z(t),    &\text{ in }   \R_+,\\
    v_z (0) &= 1.&
    \end{array}\right. 
\end{equation}
When the potential $Q\in L^1_\loc(\IR_+)$ is in the limit-point case, for instance, when
\begin{equation} \label{eq:sharp_L1_condition}
    Q\in A^1(\IR_+)\quad \iff \quad \normm{Q} : =  \sup_{x\in \R_+} \int_{x}^{x+1} |Q(t)| \, \d t < \infty,
\end{equation}
there exists $\beta_Q>0$ such that, when $z\in\IC_+\setminus [0,\beta_Q]$, problem \eqref{eq:schrodinger_prob} has a unique solution $v_z\in L^2(\IR_+)$. One then defines the \textit{Weyl-Titchmarsh function} as:
\begin{equation}\label{eq:m_def}
    m_Q(-z^2) := \partial_t v_z(0).
\end{equation}
The connection of $m_Q$ with radial Dirichlet-to-Neumann maps is as follows. Given a potential $q\in L^p_\rad(\IB^d, \R)$, with $p$ admissible, and $f\in\gH^d_\ell$ then, provided that $\ka \in \IC\setminus \Spec_{H^1_0}(-\Delta+q)$, the unique solution $u\in H^1(\IB^d)$ of
\begin{equation}\label{eq:anotherbvp}
    \left\{\begin{array}{rll}
    (-\Delta  +q-\kappa) u &=0,    &\text{ in }   \IB^d,\\
    u  &=f, &\text{ on }   \IS^{d-1},
    \end{array}\right.   
\end{equation}
can be written in the form
\begin{equation}\label{eq:change_var_radial}
u(x)= |x|^{-\nu_d}v_{\ell+ \nu_d}(-\log|x|)f(\widehat{x}), \quad\text{ with }\hat{x} = x/|x|,
\end{equation}
where $v_z\in L^2(\IR_+)$ solves the boundary value problem \eqref{eq:schrodinger_prob} for the potential $Q$ given by
\begin{equation} \label{eq:q_to_Q}
    Q(t) := e^{-2t}(q_0(e^{-t}) -\ka),\quad q(x)=q_0(|x|),  
    \qquad x\in \R_+.
\end{equation}
It turns out that $m_Q$ completely characterizes $\Lambda_{q,\ka}$.
\begin{lemma} \label{lemma:DtN_mQ}
    Let $p$ be admissible, $q \in L^p_\rad(\IB^d;\R)$ and $ \ka \in \IC \setminus \Spec_{H^1_0}(-\Delta+q)$.
    Then the potential $Q$ given by \eqref{eq:q_to_Q} is in $A^1(\R_+)$, and, if in addition $p>d/2$, then $Q\in L^1(\R_+)$. Therefore, \eqref{eq:schrodinger_prob} has a unique solution that is in $L^2(\IR_+)$, and the spectrum of $\Lambda_{q,\ka}$ can be expressed in terms of the Weyl-Titchmarsh function of $Q$ as
    \begin{equation*}
    \lambda_\ell[q,\kappa] =  -m_Q\left ( -(\ell +\nu_d)^2  \right )-\nu_d,\qquad \ell\in\N_0.
    \end{equation*}
\end{lemma}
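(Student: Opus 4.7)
The argument splits into two essentially independent pieces: the integrability properties of the transformed potential $Q$, and a Liouville-type change of unknown that rewrites the radial problem as a one-dimensional Schrödinger equation on the half-line.

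For the integrability of $Q$, I would use the substitution $r = e^{-t}$, which maps each window $[x,x+1]\subset\R_+$ to the interval $I_x := [e^{-(x+1)},e^{-x}]\subset(0,1]$ and satisfies $e^{-2t}\,\d t = -r\,\d r$. The contribution $|\ka|\int_x^{x+1}e^{-2t}\,\d t$ is trivially bounded uniformly in $x$ (and $L^1$ in $t$), so the heart of the matter is to control $\int_{I_x} r|q_0(r)|\,\d r$ using that $|q_0|^p r^{d-1}\in L^1(0,1)$. Writing $r|q_0(r)| = r^{1-(d-1)/p}(|q_0(r)|^p r^{d-1})^{1/p}$ and applying Hölder's inequality with exponents $p,p'$ gives
\begin{equation*}
\int_{I_x} r|q_0(r)|\,\d r \;\leq\; C_d\|q\|_{L^p(\IB^d)}\Bigl(\int_{I_x} r^\alpha\,\d r\Bigr)^{1/p'},\qquad \alpha := \frac{p-d+1}{p-1}.
\end{equation*}
Admissibility ($p\geq d/2$, $p>1$) forces $\alpha\geq -1$, so the $r^\alpha$-integral is uniformly bounded on every $I_x$ (equal to $1$ in the critical endpoint $\alpha=-1$, exponentially decaying in $x$ otherwise), which gives $Q\in A^1(\R_+)$. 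When $p>d/2$ one has $\alpha>-1$, making $r^\alpha$ integrable on all of $(0,1)$; integrating the same bound over $t\in(0,\infty)$ then yields $Q\in L^1(\R_+)$.

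For the second piece I would set $w(t):=r^{\nu_d}b_\ell(r,\kappa)$ with $r=e^{-t}$, following the change of variables already indicated in \Cref{l:sepvar}. Two applications of $\partial_t=-r\partial_r$ yield
\begin{equation*}
\partial_t^2 w \;=\; r^{\nu_d+2}\Bigl(\partial_r^2 b_\ell + \tfrac{d-1}{r}\partial_r b_\ell\Bigr) + \nu_d^2\, r^{\nu_d}\, b_\ell,
\end{equation*}
and substituting the radial equation \eqref{eq:radial_sol_b_libre} together with the algebraic identity $\ell(\ell+d-2)+\nu_d^2 = (\ell+\nu_d)^2$ produces $(-\partial_t^2+Q(t))w = -(\ell+\nu_d)^2 w$ with $w(0)=b_\ell(1,\kappa)=1$. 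This is exactly \eqref{eq:schrodinger_prob} at $z=\ell+\nu_d$. Uniqueness of $v_z=w$ in $L^2(\R_+)$ then follows from the classical fact that uniformly locally $L^1$ potentials on the half-line lie in the limit-point case at infinity, which applies since we have just proved $Q\in A^1(\R_+)$.

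It remains to relate $\lambda_\ell[q,\kappa]$ to $m_Q$: differentiating $b_\ell(r,\kappa) = r^{-\nu_d}w(-\log r)$ at $r=1$ gives $\partial_r b_\ell(1,\kappa) = -\nu_d w(0) - w'(0) = -\nu_d - m_Q(-(\ell+\nu_d)^2)$ by the definition \eqref{eq:m_def}, and combining with the eigenvalue identity $\lambda_\ell[q,\kappa]=\partial_r b_\ell(1,\kappa)$ from \Cref{rem:symb}(ii) closes the argument. The main technical subtlety is the critical endpoint $p=d/2$, where $\alpha+1=(2p-d)/(p-1)$ vanishes and the Hölder factor becomes logarithmic: still uniformly bounded on each $I_x$, which is enough for $A^1$, but non-integrable over $(0,1)$; this is precisely what forces the strict inequality $p>d/2$ for the $L^1$-conclusion of the lemma.
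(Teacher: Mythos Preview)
Your proof is correct and follows essentially the same approach as the paper: the Liouville change $w(t)=r^{\nu_d}b_\ell(r,\kappa)$, the computation $\partial_r b_\ell(1,\kappa)=-\nu_d-m_Q(-(\ell+\nu_d)^2)$, and the use of H\"older to pass from $q\in L^p_\rad(\IB^d)$ to $Q\in A^1(\R_+)$ (resp.\ $L^1(\R_+)$ when $p>d/2$) are all the same ingredients. The only cosmetic difference is that the paper applies H\"older in the $t$-variable---observing that $\int_0^\infty |Q(t)|^p e^{(2p-d)t}\,\d t=|\IS^{d-1}|^{-1}\|q-\ka\|_{L^p}^p$ and then bounding $\int_x^{x+1}|Q|$ by $\|Q\|_{L^p(\R_+)}$---whereas you pull back to the $r$-variable first; both arguments are equivalent and hinge on the same exponent identity $\alpha+1=(2p-d)/(p-1)$.
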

\begin{proof}
    That $Q\in A^1(\R_+)$ can be checked by noticing that, since \eqref{eq:p_cond} implies $2p-d\geq 0$
    \begin{equation*}
        \frac{\norm{q-\ka}_{L^p(\IB^d)}^p}{|\IS^{d-1}|} =  \int_0^\infty |Q(t)|^p e^{(2p-d) t} \d t \geq  \int_{0}^\infty |Q(t)|^p \d t \geq \left(\int_x^{x+1} |Q(t)| \d t\right)^p.
    \end{equation*}
    When $2p-d>0$ one can improve this to:
    \begin{equation}\label{ep:estQ}
         \frac{\norm{q-\ka}_{L^p(\IB^d)}^p}{|\IS^{d-1}|}=\int_0^\infty |Q(t)|^p e^{(2p-d) t} \d t \geq \left(\frac{2p-d}{p-1}\right)^{p-1}\|Q\|_{L^1(\IR_+)}^p.  
    \end{equation}
    This ensures that $Q$ satisfies the condition \eqref{eq:sharp_L1_condition}, that \eqref{eq:schrodinger_prob} has a unique square-integrable solution, and that $m_Q$ is well defined. Let $f\in \gH^d_\ell$, and let $v_{\ell+\nu_d}\in L^2(\IR_+)$ solve \eqref{eq:schrodinger_prob}, so that $u$ given by \eqref{eq:change_var_radial} solves \eqref{eq:anotherbvp}. 
    Then $\Lambda_{q,\ka}f=\lambda_\ell[q,\kappa]f$ where
    \begin{equation*}  
    \lambda_\ell[q,\kappa] =  \partial_r \left[r^{-\nu_d}v_\ell(-\log r)\right ]\bigg |_{r=1}   =  -\partial_t v_{\ell+\nu_d}(0) -\nu_d =  -m_Q(-(\ell+\nu_d)^2) -\nu_d,
    \end{equation*} 
    as claimed.
\end{proof}
Let us apply the preceding result to the potential 
\begin{equation} \label{eq:Q_ka_def}
    Q_\ka(t) := -\ka e^{-2t},\qquad \ka \in \IR,
\end{equation}
which corresponds via \eqref{eq:q_to_Q} to the constant potential $-\ka$. When $\ka=0$, it is not difficult to check that its Weyl-Titchmarsh function equals $m_0(-z^2)=-z$. When $\ka\neq 0$, $m_{Q_\ka}$ can be explicitly computed as well.

\begin{lemma} \label{lemma:m_function_ka}
    Suppose $\ka \in \IC \setminus\{0\}$. For every $z\geq 0$ such that $J_z(\sqrt{\ka})\neq 0$,\footnote{This condition is always fulfilled when $\kappa\in\IC\setminus\IR_+$, since all the roots  of $J_z$ are real as soon as $z>-1$.}   the problem 
    \begin{equation*}
        (-\partial_t^2 + Q_\ka)v_z=-z^2v_z,\qquad v_z(0)=1,
    \end{equation*}
    has a unique solution $u_z(\cdot,\ka)\in L^2(\R_+)$. This solution and the Weyl-Titchmarsh function of $Q_\ka$ are, respectively,
    \begin{equation}\label{eq:ukmqk}
        u_z(t,\ka) := \frac{J_z(\sqrt{\ka}e^{-t})}{J_z(\sqrt{\ka})},\qquad m_{Q_\ka}\left ( -z^2  \right ) = -z + \sqrt{\kappa}\dfrac{J_{z+1}(\sqrt{\kappa})}{J_z(\sqrt{\kappa})}.
    \end{equation}
    In particular, for every $\ell\in\IN_0$ such that $\kappa\in\IC\setminus \Spec_{H^1_0}(-\Delta|_{\cE_\ell})$, the radial profile in \eqref{eq:change_var_radial} of the corresponding solution of \eqref{eq:anotherbvp} with $q=0$ and $f\in\gH^d_\ell$ is 
    \begin{equation} \label{eq:radial_profile}
        \frac{1}{r^{\nu_d}}u_z(-\log r,\ka)=  \dfrac{\varphi_{\ell}(\sqrt{\kappa} r)}{\varphi_{\ell}(\sqrt{\kappa})},
    \end{equation}
    and, provided that $\ka\in\IC\setminus\Spec_{H^1_0}(-\Delta)$, the spectrum of $\Lambda_{0,\ka}$ is the sequence
    \begin{equation*}
    \dfrac{\partial_r\varphi_{\ell}(\sqrt{\kappa})}{\varphi_{\ell}(\sqrt{\kappa})}=\ell - \sqrt{\kappa}\dfrac{J_{\ell+1+\nu_d}(\sqrt{\kappa})}{J_{\ell+\nu_d}(\sqrt{\kappa})} ,\qquad\ell\in\IN_0.   
    \end{equation*}
\end{lemma}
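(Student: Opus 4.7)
The plan is to verify the formula for $u_z(\cdot,\ka)$ by reducing $(-\partial_t^2+Q_\ka)v=-z^2 v$ to the Bessel equation through the substitution $s=\sqrt{\ka}\,e^{-t}$, and then read off $m_{Q_\ka}$ from the boundary derivative. Uniqueness in $L^2(\IR_+)$ comes for free: the potential $Q_\ka(t)=-\ka e^{-2t}$ lies in $L^1(\IR_+)\subset A^1(\IR_+)$, so the limit-point framework of \eqref{eq:sharp_L1_condition} already places us in a setting where at most one square-integrable solution exists.

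For the verification, set $s(t):=\sqrt{\ka}\,e^{-t}$, so that $s'=-s$, and put $f(t):=J_z(s(t))$. A chain-rule computation gives $f''(t)=s^2 J_z''(s)+s J_z'(s)$, and Bessel's equation $s^2J_z''(s)+sJ_z'(s)+(s^2-z^2)J_z(s)=0$ together with the identity $s(t)^2=-Q_\ka(t)$ yields $(-\partial_t^2+Q_\ka)f=-z^2 f$. Normalizing by $J_z(\sqrt{\ka})\neq 0$ produces the candidate $u_z$ with $u_z(0,\ka)=1$. The small-argument asymptotic $J_z(s)\sim s^z/(2^z\,\Gamma(z+1))$ as $s\to 0^+$ then gives $u_z(t,\ka)=O(e^{-zt})$ as $t\to\infty$, so $u_z\in L^2(\IR_+)$ whenever $z>0$. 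Differentiating at $t=0$ yields
\[m_{Q_\ka}(-z^2)=\partial_t u_z(0,\ka)=-\sqrt{\ka}\,\frac{J_z'(\sqrt{\ka})}{J_z(\sqrt{\ka})},\]
and the standard recurrence $J_z'(s)=(z/s)J_z(s)-J_{z+1}(s)$ evaluated at $s=\sqrt{\ka}$ delivers the claimed expression.

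For the radial-profile statement, unpacking $\varphi_\ell(z)=J_{\ell+\nu_d}(z)/z^{\nu_d}$ gives
\[\frac{\varphi_\ell(\sqrt{\ka}\,r)}{\varphi_\ell(\sqrt{\ka})}=r^{-\nu_d}\,\frac{J_{\ell+\nu_d}(\sqrt{\ka}\,r)}{J_{\ell+\nu_d}(\sqrt{\ka})}=r^{-\nu_d}u_{\ell+\nu_d}(-\log r,\ka),\]
which together with \eqref{eq:change_var_radial} identifies this function as the radial profile of the unique solution of \eqref{eq:anotherbvp} with $q=0$ and $f\in\gH_\ell^d$. The spectrum of $\Lambda_{0,\ka}$ is then obtained by applying \Cref{lemma:DtN_mQ} at $q=0$ with the explicit $m_{Q_\ka}$ just computed: the $\nu_d$ terms cancel and one is left with $\ell-\sqrt{\ka}\,J_{\ell+\nu_d+1}(\sqrt{\ka})/J_{\ell+\nu_d}(\sqrt{\ka})$. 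The equivalent expression $\partial_r\varphi_\ell(\sqrt{\ka})/\varphi_\ell(\sqrt{\ka})$ is then just \eqref{e:eigenvrough} combined with \eqref{e:defb}, or alternatively a direct consequence of the identity above and the Bessel recurrence used once more.

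There is essentially no hard step: the whole lemma is a direct computation once the Bessel structure is uncovered by the substitution $s=\sqrt{\ka}\,e^{-t}$. The only delicate point is the $L^2$ claim at $z=0$ (relevant only for $d=2$, $\ell=0$), where the asymptotic exponent vanishes and $u_0(\cdot,\ka)$ fails to be square-integrable at infinity; in that borderline case the uniqueness statement must be interpreted as uniqueness of the Weyl solution in the limit-point sense, which still coincides with the formula above.
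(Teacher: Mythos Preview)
Your proof is correct and follows essentially the same approach as the paper: reduce to Bessel's equation via the substitution $s=\sqrt{\ka}\,e^{-t}$, use the recurrence $sJ_z'(s)=zJ_z(s)-sJ_{z+1}(s)$ to obtain $m_{Q_\ka}$, and invoke \Cref{lemma:DtN_mQ} for the spectrum of $\Lambda_{0,\ka}$. Your treatment is in fact slightly more careful than the paper's---you explicitly justify $L^2$-membership via the small-argument asymptotics of $J_z$ and correctly flag the borderline case $z=0$ (relevant only for $d=2$, $\ell=0$), which the paper glosses over.
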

\begin{proof}
    The solution $u_z(t,\ka)$ can be explicitly computed as follows. Since $\kappa\neq 0$, we can set $u_z(t, \ka)=w_z(\sqrt{\ka}e^{-t},\ka)$, and direct computation gives that $w_z(\cdot, \ka)$ is a solution of Bessel's equation
    \begin{equation*}
	s^2 w_{z}''(s,\ka) + s w_{z}'(s,\ka) +\left(s^2-z^2\right) w_{z}(s, \ka) = 0,\qquad s\in\IC.
    \end{equation*}
    The condition $u_{z}(0,\kappa)=1$ translates into $w_z(\sqrt{\ka},\ka)=1$; and this forces 
    \[
    w_z(s,\ka)=\frac{J_z(s)}{J_z(\sqrt{\ka})},
    \]
    which gives the claimed formula for $u_z(\cdot,\ka)$.
    This is well defined due to the assumption on $\kappa$.
    The expression  for $m_{Q_\ka}(-z^2)=\partial_t u_z(0,\ka)$ can be obtained using that, for $\nu\geq 0$,
    \begin{equation*}
        sJ_\nu'(s)=\nu J_\nu(s)-sJ_{\nu+1}(s),\qquad \forall s\in\IC.
    \end{equation*}
    Identity \eqref{eq:radial_profile} follows by taking into account that $\Spec_{H^1_0}(-\Delta|_{\cE_\ell})$, the spectrum of the restriction of the Dirichlet Laplacian $-\Delta$ on $\IB^d$ to functions with angular component in $\gH^d_\ell$, is precisely the set of solutions of $\varphi_\ell(\sqrt{\ka}) =0$. Finally, the calculation of the spectrum of $\Lambda_{0,\ka}$ is obtained by \eqref{eq:ukmqk} and the last identity of \Cref{lemma:DtN_mQ}. 
\end{proof}

\subsection{Fixed energy \texorpdfstring{$A_\ka$}{Ak}-amplitude} \label{sec:subsec_1dstate}

For $s\in\R$, we introduce the space $L^1_s(\R_+)$ consisting of those functions $F\in L^1_\loc(\R_+)$ such that 
\begin{equation*}
    \norm{F}_{L^1_s(\R_+)} := \int_0^\infty |F(x)|  e^{-2st} \, \d t.
\end{equation*}
In his seminal paper \cite{Simon_spectral_I}, Simon proved that  for all $Q \in L^1(\R_+)$
there is a $z_Q>0$ and a function $A_Q \in L^1_{z_Q}(\R_+)$ such that
\begin{equation} \label{eq:A_amplitude}
    m_Q(-z^2) -m_{0}(-z^2) = \int_0^\infty A_Q(t) e^{-2zt} \, \d t,
\end{equation}
for all $z$ such that $\Re(z) \ge z_Q$. The function $A_Q$ is known as the $A$-amplitude of the potential $Q$. The class of potentials for which this holds was later widened  to $Q \in L^\infty(\R_+)$ by Gesztesy and Simon in \cite{Simon_spectral_II}. Finally 
Avdonin, Mikhaylov and Rybkin extended this result in \cite{Avdonin_Mikhay_Rybkin_07} for potentials in $L^1_\loc(\R_+)$ such that \eqref{eq:sharp_L1_condition} holds. 
In particular, they proved the existence  of $A_Q \in L^1_{z_Q}(\R_+)$ with
\begin{equation} \label{eq:z_q_choice}
    z_Q :=  2e(\normm{Q} +\sqrt{\normm{Q}}),
\end{equation}
such that \eqref{eq:A_amplitude} holds.

We will generalize the notion of $A$-amplitude to obtain a similar representation formula to  \eqref{eq:A_amplitude} when $m_0$ is replaced by $m_{Q_\ka}$ for the same class of potentials used in \cite{Avdonin_Mikhay_Rybkin_07}. This will lead to the notion of $A_\ka$-amplitude, whose existence we prove and analyze some of its properties (local uniqueness and stability, among others). This will allow us to prove the existence of the Born approximation for the fixed-energy Calderón problem and obtain many of its properties.

Let us start by analyzing the Weyl-Titchmarsh function of the potentials $Q_\ka$ defined in \eqref{eq:Q_ka_def}. 

\begin{lemma} \label{lemma:m_function}
    For every $\ka \in \R \setminus\{0\}$ there exists a function $A_{Q_\ka} \in L^1_{z_\ka}(\R_+)$ such that 
    \begin{equation*}
        m_{Q_\ka}(-z^2) -m_{0}(-z^2) = \int_0^\infty A_{Q_\ka}(t) e^{-2zt} \, \d t,
    \end{equation*}
    for all $z\in \IC_+$ satisfying  $\Re(z) \ge z_\ka$, where $z_\ka$ is any real number such that
    \begin{equation}\label{eq:zk}
        z_\ka \ge 0, \qquad z_\ka > \frac{\ka}{\sqrt{|\ka|}} -j_{0,1}.
    \end{equation}
\end{lemma}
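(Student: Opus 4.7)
The plan is to exploit the explicit formula from \Cref{lemma:m_function_ka},
$$F(z):=m_{Q_\ka}(-z^2)-m_0(-z^2)=\sqrt{\ka}\,\frac{J_{z+1}(\sqrt{\ka})}{J_z(\sqrt{\ka})},$$
and realise $F$ as a Laplace transform $F(z)=\int_0^\infty A_{Q_\ka}(t)\,e^{-2zt}\,\d t$ for $\Re z\ge z_\ka$.

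First I would locate the singularities of $F$, which are the zeros of the entire function $z\mapsto J_z(\sqrt{\ka})$. When $\ka<0$, substituting $\sqrt{\ka}=i\sqrt{|\ka|}$ and using the identity $J_z(iy)=e^{i\pi z/2}I_z(y)$ gives $F(z)=-\sqrt{|\ka|}\,I_{z+1}(\sqrt{|\ka|})/I_z(\sqrt{|\ka|})$; since $I_\nu(y)>0$ for $y>0$ and $\Re\nu>-1$, no zeros arise in $\Re z>-1$, so one can take $z_\ka=0$, consistent with \eqref{eq:zk} (indeed $\ka/\sqrt{|\ka|}-j_{0,1}<0$ in this regime). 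When $\ka>0$ the zeros are real and correspond to $\sqrt{\ka}=j_{\nu,k}$ for some $k\ge1$; the largest such $\nu$, call it $\nu^*(\ka)$, is the inverse of $\nu\mapsto j_{\nu,1}$ at $\sqrt{\ka}$. The classical monotonicity of $\nu\mapsto j_{\nu,1}-\nu$ on $[0,\infty)$, which yields $j_{\nu,1}-\nu\ge j_{0,1}$, forces $\nu^*(\ka)\le\sqrt{\ka}-j_{0,1}$, so $F$ is holomorphic on $\{\Re z>\sqrt{\ka}-j_{0,1}\}$, matching the second condition in \eqref{eq:zk}.

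Next, I would control the growth of $F$ along vertical lines in its half-plane of holomorphy. The series \eqref{eq:defvphi} combined with the standard uniform asymptotics of $J_z(x)$ as $|z|\to\infty$ with $x$ bounded yields
$$F(z)=\frac{\ka}{2(z+1)}+O(|z|^{-2}),$$
matching the expected first Born term $-Q_\ka(0)=\ka$, and in particular giving absolute integrability along such lines.

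With analyticity and decay in hand, I would reconstruct $A_{Q_\ka}$ through the Bromwich inversion
$$A_{Q_\ka}(t):=\frac{1}{\pi i}\int_{c-i\infty}^{c+i\infty}F(z)\,e^{2zt}\,\d z,\qquad c>z_\ka,$$
which by Cauchy's theorem is independent of $c\in(z_\ka,\infty)$ and supported in $\R_+$, and verify by Plancherel (applied along $z=z_\ka+is$) that $A_{Q_\ka}\in L^2(\R_+,e^{-2z_\ka t}\d t)$. The explicit leading behaviour $\ka e^{-2t}$ then upgrades this to $A_{Q_\ka}\in L^1_{z_\ka}(\R_+)$, and the Laplace representation follows by Fubini. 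The main obstacle is the sharp tracking of $z_\ka$ for $\ka>0$: the generic Avdonin--Mikhaylov--Rybkin bound from \eqref{eq:z_q_choice} would give a considerably weaker rate, and only the Bessel-function inequality $j_{\nu,1}-\nu\ge j_{0,1}$ allows one to push the inversion contour all the way down to $\Re z=\sqrt{\ka}-j_{0,1}$.
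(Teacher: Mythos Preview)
Your zero-location analysis coincides with the paper's and captures the essential content of the lemma. The paper's proof, however, is a two-liner: having bounded the non-negative real zeros of $z\mapsto J_z(\sqrt{\ka})$ via the same inequality $j_{\nu,1}\ge\nu+j_{0,1}$ that you use, it simply invokes \cite[Theorem~1.3]{Simon_spectral_III}, which directly furnishes $A_{Q_\ka}\in L^1_{z_\ka}(\R_+)$ once the relevant spectral threshold is known. No Bromwich contour, growth estimate, or Plancherel argument is needed.

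Your hands-on Laplace inversion is a valid alternative in spirit, but as written it has two soft spots. First, asserting that for $\ka>0$ the zeros of $z\mapsto J_z(\sqrt{\ka})$ ``are real'' is not a standard Bessel identity and is not what you actually need; the clean argument is that $m_{Q_\ka}$ is Herglotz in the spectral parameter, so for $\Re z>0$ with $\Im z\ne0$ the value $-z^2$ is non-real and $m_{Q_\ka}(-z^2)$ is automatically analytic---only real $z>0$ can produce poles, and those you have already located. Second, the passage from $A_{Q_\ka}\in L^2(\R_+,e^{-2z_\ka t}\d t)$ to $A_{Q_\ka}\in L^1_{z_\ka}(\R_+)$ does not follow from the leading term $\ka e^{-2t}$ alone: after subtracting it, the remainder is merely bounded (or $L^2$), neither of which implies $L^1$ on $\R_+$. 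To close this you must exploit the \emph{strict} inequality in \eqref{eq:zk} to shift the contour to $\Re z=z_\ka-\epsilon$, use the $O(|z|^{-2})$ decay there to get $A_{Q_\ka}(t)e^{-2(z_\ka-\epsilon)t}$ bounded, and then conclude $A_{Q_\ka}(t)e^{-2z_\ka t}=O(e^{-2\epsilon t})\in L^1$. This is routine but should be stated. The paper sidesteps both issues by outsourcing the analytic work to Simon's existing machinery.
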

\begin{proof}
    This follows from the fact that   $J_z(\sqrt{\ka})$  has no non-negative zeros for $\ka$ fixed if $\ka <0 $. If $\ka>0$, then  the largest zero $z_L$ satisfies $ j_{z_L,1} = \sqrt{\ka}$, and there are no zeros when $z$ is in the region determined by the condition  $ j_{z,1} > \sqrt{\ka}$. Since  from \cite[Equation (2.4)]{Laforgia_bessel_zero_low_bound_83} we have the lower  bound $j_{z,1} \ge  z + j_{0,1}$ when $z\ge 0$, it follows that $J_z(\sqrt{\ka})$ has no zeros if $ z + j_{0,1} > \sqrt{\ka} $ and $z\ge 0$. In other words, non-negative zeros are contained in the interval $[0,\sqrt{\ka}-j_{0,1}]$ when $\sqrt{\ka} \ge j_{0,1}$,  and there are no zeros in $[0,\infty)$ if $\sqrt{\ka} < j_{0,1}$. By \cite[Theorem 1.3]{Simon_spectral_III} we conclude the existence of $A_{Q_\ka}\in L^1_{z_\ka}(\R_+)$ where $z_\ka$ is defined in the statement.
\end{proof}
Define
\begin{equation} \label{eq:alpha_q_def}
    \alpha_Q(t) := \int_0^t |Q(s)| \, \d s.
\end{equation}

The main properties of the $A_\ka$-amplitude that are proved in this article are contained in the following two theorems.
\begin{theorem}  \label{thm:A-amplitude}
    Let $Q \in A^1(\R_+)$  and $\ka \in \R$.
    There exists a unique function  $A_{Q,\ka} \in L^1_{s}(\R_+)$, where $s:= \max (z_Q, z_\ka )$, such that
    \begin{equation*}
        m_Q(-z^2) -m_{Q_\ka}(-z^2) = \int_0^\infty A_{Q,\ka}(t) u_z(t,\ka)^2 \, \d t,
    \end{equation*}
    for all $z\in \IC$ with $\Re(z) \ge s$, with $u_z(\cdot,\ka)$ given by \eqref{eq:ukmqk}. The function $A_{Q,0}$ coincides with $A_Q$, the usual $A$-amplitude of $Q$. Moreover, 
    \begin{enumerate}[i)]
        \item $A_{Q,\ka} - Q\in \cC(\IR_+)$ and $\lim_{t\to 0^+}(A_{Q,\ka} - Q)(t)=0$.
        \item There exists $C_\ka>0$ such that, for every $x\in\IR_+$, the following estimate holds:
            \begin{equation} \label{eq:est_thm_1d_2}
            |A_{Q,\ka}(t) + Q_\ka(t) - Q(t)| \le   C_\ka(1+\alpha_Q(t)) e^{ t(\alpha_{Q}(t) +|\ka|) } \int_{0}^t |Q(s)-Q_\ka(s)|\, \d s .
            \end{equation}
        \item If $Q \in \cC^{m}([0,\infty))$ for some $m\in \N_0$, then $A_{Q,\ka} - Q$ belongs to $\cC^{m+1}([0,\infty))$.
        \item If $Q\ge 0$ $a.e.$ in $\R_+$, $Q(1+|\cdot|) \in L^1(\R_+)$, and $\ka < (j_{0,1})^2$, then  $A_{Q,\ka} \in L^1(\R_+)$.
    \end{enumerate}
\end{theorem}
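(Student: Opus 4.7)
The plan is to construct $A_{Q,\ka}$ by first reducing to Simon's $A$-amplitude theory (in the Avdonin--Mikhaylov--Rybkin form) and then inverting the change-of-basis transformation from the exponential family $\{e^{-2zt}\}_z$ to the Bessel-squared family $\{u_z(\cdot,\ka)^2\}_z$. The case $\ka=0$ will be tautological since $u_z(\cdot,0)=e^{-z\cdot}$, and uniqueness will then force $A_{Q,0}=A_Q$.

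\textbf{Construction.} Applying the Avdonin--Mikhaylov--Rybkin theorem to both $Q\in A^1(\R_+)$ and $Q_\ka\in A^1(\R_+)$ (the latter being bounded, hence trivially in $A^1$), one obtains Simon's amplitudes $A_Q\in L^1_{z_Q}(\R_+)$ and $A_{Q_\ka}\in L^1_{z_\ka}(\R_+)$; subtracting the two representations gives
\[
m_Q(-z^2)-m_{Q_\ka}(-z^2)=\int_0^\infty (A_Q-A_{Q_\ka})(t)\,e^{-2zt}\,\d t,\qquad \Re z\ge s.
\]
Next, using the power series of $J_z$, one expands
\[
u_z(t,\ka)^2=\sum_{n=0}^\infty c_n(z,\ka)\,e^{-2(z+n)t},
\]
where $c_0(z,\ka)\to 1$ and $c_n(z,\ka)/c_0(z,\ka)=O\bigl((|\ka|/(2|z|))^n/n!\bigr)$ for $n\ge 1$, both inherited from $J_z(\sqrt{\ka})\sim(\sqrt{\ka}/2)^z/\Gamma(z+1)$. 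Substituting an Ansatz $A_{Q,\ka}\in L^1_s(\R_+)$ into the desired representation and equating it with the Laplace formula above yields the functional equation
\[
c_0(z,\ka)\int_0^\infty A_{Q,\ka}(t)\,e^{-2zt}\,\d t+\sum_{n\ge 1} c_n(z,\ka)\int_0^\infty A_{Q,\ka}(t)\,e^{-2(z+n)t}\,\d t=\int_0^\infty (A_Q-A_{Q_\ka})(t)\,e^{-2zt}\,\d t.
\]
The super-exponential decay of $c_n/c_0$ in $\Re z$ will allow a Neumann iteration on a Hardy-type space over $\{\Re z\ge s\}$; inverting the Laplace transform at each step should then deliver $A_{Q,\ka}\in L^1_s(\R_+)$, uniquely determined by injectivity of the Laplace transform.

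\textbf{Properties (i)--(iv).} Unpacking the iteration shows that at leading order $A_{Q,\ka}$ tracks $Q-Q_\ka$ modulo continuous corrections that vanish at $t=0$, yielding (i); differentiating the same formula gives (iii) by a standard regularity bootstrap. For the pointwise bound (ii), one carries out a direct bookkeeping of the iterates, controlling each kernel by Bessel Green-function estimates and applying Gronwall's inequality to extract the exponential factor $e^{t(2\alpha_Q+|\ka|)}$ together with the source $\int_0^t|Q-Q_\ka|\,\d s$. Property (iv) rests on the Herglotz structure of $m_Q-m_{Q_\ka}$: the hypothesis $\ka<(j_{0,1})^2$ guarantees by \Cref{lemma:m_function} that $J_z(\sqrt{\ka})$ has no non-negative real zeros, so $m_{Q_\ka}$ extends analytically to a neighborhood of $[0,\infty)$; positivity and weighted integrability of $Q$ then force the relevant spectral measure to be finite, and this transfers through the inverse transformation to $L^1$-integrability of $A_{Q,\ka}$.

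\textbf{Main obstacle.} The hard part will be the invertibility of the functional equation: the coefficients $c_n/c_0$ depend genuinely on $z$, so the iteration is not a pure convolution, and one must verify both uniform convergence on $\{\Re z\ge s\}$ and that the resulting inverse Laplace transform is an honest $L^1_s$ function rather than merely a distribution. The super-exponential Bessel decay is the key input that should make the Neumann scheme close, provided $s\ge\max(z_Q,z_\ka)$ as in the theorem's statement.
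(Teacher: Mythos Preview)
Your starting point---writing $m_Q-m_{Q_\ka}$ as the Laplace transform of $A_Q-A_{Q_\ka}$---matches the paper's. The divergence is in how you invert the change of basis from $e^{-2zt}$ to $u_z(t,\ka)^2$. You stay in Laplace space, expand $u_z^2$ as a $z$-dependent series of shifted exponentials, and propose a Neumann iteration on a Hardy-type space. The paper instead passes to physical space via the Gel'fand--Levitan transformation operator: since $u_z(t,\ka)=\psi_z(t;\ka)/\psi_z(0;\ka)$ and the Jost solution obeys $\psi_z(t;\ka)=e^{-zt}+\int_t^\infty K_\ka(t,s)e^{-zs}\,\d s$ with a smooth Marchenko kernel $K_\ka$, one has $\int_0^\infty F(t)\psi_z(t;\ka)^2\,\d t=\int_0^\infty (\cG_\ka F)(t)e^{-2zt}\,\d t$ for an explicit Volterra operator $\cG_\ka=\Id+(\text{smooth kernel})$, and the boundary factor $\psi_z(0;\ka)^2$ is absorbed by a second Volterra operator $\cT_\ka$. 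This yields the closed formula $A_{Q,\ka}=\cG_\ka^{-1}\cT_\ka(A_Q-A_{Q_\ka})$.

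This is precisely the missing ingredient in your proposal. Your functional equation $c_0(z)\hat A(z)+\sum_{n\ge1}c_n(z)\hat A(z+n)=\hat F(z)$ is the Laplace-space shadow of the paper's Volterra equation, but you do not carry out the iteration, and you yourself flag as the ``main obstacle'' the step of showing that the limit is the Laplace transform of an honest $L^1_s$ function. In physical space the obstacle disappears: Volterra operators of the form $\Id+(\text{smooth kernel})$ are invertible on every $L^1_s$ by a trivially convergent Neumann series, and $\cG_\ka^{-1}-\Id$, $\cT_\ka-\Id$ map $L^1_{\loc}$ into $\cC$ and $\cC^m$ into $\cC^{m+1}$. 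That is what makes (i) and (iii) immediate from the known regularity of $A_Q-Q$, and what gives (ii) directly from the Marchenko bound $|K_\ka(x,t)|\le\tfrac{|\ka|}{4}e^{|\ka|/4}e^{-(x+t)}$ combined with the estimate $|A_{Q_1}-Q_1-(A_{Q_2}-Q_2)|\le e^{2t(\alpha_{Q_1}+\alpha_{Q_2})}\int_0^t|Q_1-Q_2|$. For (iv) the paper does not argue via the spectral measure as you sketch; it invokes directly Simon's theorem that $A_Q\in L^1$ when $Q\ge 0$ and $Q(1+|\cdot|)\in L^1$, combines it with $A_{Q_\ka}\in L^1$ for $\ka<j_{0,1}^2$, and concludes by the $L^1$-boundedness of $\cG_\ka^{-1}\cT_\ka$.
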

The function $A_{Q,\ka}$ will be called the $A_\ka$\text{-amplitude of} $Q$. We also prove that the correspondence that maps a potential to its $A_\ka$-amplitude is injective, and that its inverse is locally Hölder continuous.

 \begin{theorem}  \label{thm:stability_A_function}
    For every $a>0$ the following statements hold.
    \begin{enumerate}[i)]
        \item Let $Q_1,Q_2 \in A^1(\R_+)$. 
        \begin{equation*}
        A_{Q_1,\ka}|_{(0,a)} = A_{Q_2,\ka}|_{(0,a)}   \; \iff \; Q_1|_{(0,a)} = Q_2|_{(0,a)} .
        \end{equation*}
        \item For every $1<p\le \infty$ and $M>0$ there exist $\varepsilon(\ka,a,p), C_1(\ka,a,p,M)>0$ such that for every $Q_1,Q_2 \in A^1(\IR_+)$ satisfying
        \begin{equation} \label{est:A_condition}
        \norm{Q_j}_{L^p((0,a))} <  M, \; j=1,2,\qquad \int_0^a  \left| A_{Q_1,\ka}(t) - A_{Q_2,\ka}(t) \right| \, \d t <\varepsilon(\ka,a,p),
        \end{equation}
        one has 
        \begin{equation*}
        \int_0^{a}  \left| Q_1(t)-Q_2(t) \right| \,\d t    < C_1(\ka,a,p,M)
        \left(\int_0^a  \left| A_{Q_1,\ka}(t) - A_{Q_2,\ka}(t) \right| \, \d t\right)^{1/(p'+1)}.
        \end{equation*}
    \end{enumerate} 
 \end{theorem}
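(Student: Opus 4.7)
The strategy is to derive both parts from a bilinear Volterra-type difference estimate that should come out of the proof of \Cref{thm:A-amplitude}. Specifically, I expect the construction of $A_{Q,\ka}$ in that theorem to proceed via a nonlinear Volterra integral equation whose linearization at $Q=Q_\ka$ yields the pointwise bound (ii) there. Running this construction simultaneously for $Q_1$ and $Q_2$ and subtracting, one should obtain, for a.e.\ $t\in(0,a)$,
\begin{equation*}
\bigl|(A_{Q_1,\ka}-A_{Q_2,\ka})(t)-(Q_1-Q_2)(t)\bigr| \le C\bigl(\ka,\alpha_{Q_1}(a),\alpha_{Q_2}(a)\bigr)\int_0^t |Q_1-Q_2|(s)\bigl(1+|Q_1|+|Q_2|+|Q_\ka|\bigr)(s)\,\d s,
\end{equation*}
which is the central analytic input.

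Given this estimate, the ``$\Leftarrow$'' implication of (i) is immediate from the Volterra structure, since the right-hand side vanishes on $(0,a)$ whenever $Q_1\equiv Q_2$ there. Conversely, if $A_{Q_1,\ka}=A_{Q_2,\ka}$ on $(0,a)$, setting $u(t):=\int_0^t |Q_1-Q_2|(s)\,\d s$ turns the bilinear estimate into $u'(t)\le \phi(t)\,u(t)$ with $\phi\in L^1_\loc(0,a)$ (using $Q_j\in A^1(\R_+)$ and local boundedness of $Q_\ka$). Grönwall's inequality then forces $u\equiv 0$, hence $Q_1=Q_2$ a.e.\ on $(0,a)$.

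For (ii), integrating the bilinear estimate over $(0,a)$ and using Fubini yields
\begin{equation*}
\|Q_1-Q_2\|_{L^1(0,a)}\le \|A_{Q_1,\ka}-A_{Q_2,\ka}\|_{L^1(0,a)}+C(\ka,a)\int_0^a |Q_1-Q_2|(s)\bigl(1+|Q_1|+|Q_2|+|Q_\ka|\bigr)(s)\,\d s.
\end{equation*}
By Hölder's inequality with exponents $p,p'$ and the a priori bound $\|Q_j\|_{L^p(0,a)}\le M$, the right-hand integral is controlled by $C(\ka,a,M)\,\|Q_1-Q_2\|_{L^{p'}(0,a)}$. Interpolation between $L^1$ and $L^p$ (using $\|Q_1-Q_2\|_{L^p}\le 2M$) then produces a bound of the form $\|Q_1-Q_2\|_{L^{p'}}\le C(M,a)\,\|Q_1-Q_2\|_{L^1}^{1/p'}$. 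Writing $X:=\|Q_1-Q_2\|_{L^1(0,a)}$ and $Y:=\|A_{Q_1,\ka}-A_{Q_2,\ka}\|_{L^1(0,a)}$, this reduces the problem to the algebraic inequality $X\le Y+C(\ka,a,M)\,X^{1/p'}$, which, balanced under the smallness hypothesis $Y<\varepsilon(\ka,a,p)$, yields the Hölder bound $X\le C(\ka,a,p,M)\,Y^{1/(p'+1)}$ of \eqref{est:stability_qB}.

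The chief obstacle is establishing the bilinear difference estimate cleanly. The construction of $A_{Q,\ka}$ in \Cref{thm:A-amplitude} likely expresses it as a convergent series of iterated integrals in $Q$, and controlling the cancellations between the expansions for $Q_1$ and $Q_2$ requires tracking each term so that the bilinear constant remains uniform in the iteration order and polynomial in $\alpha_{Q_j}(a)$. A secondary technical point is the interpolation step in (ii) for small admissible exponents $p<2$, where $p'>p$ puts $L^{p'}$ outside the range directly reachable by log-convexity between $L^1$ and $L^p$; in that regime a level-set/Marcinkiewicz-type splitting (using the a priori $L^p$ bound to truncate large values of $|Q_1-Q_2|$) must be used to obtain the sublinear bound on $\|Q_1-Q_2\|_{L^{p'}}$ in terms of $\|Q_1-Q_2\|_{L^1}$ that drives the final exponent $1/(p'+1)$.
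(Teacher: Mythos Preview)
The paper's proof takes a different route: it does not attempt a direct bilinear estimate but instead uses the representation $A_{Q,\ka}=\cG_\ka^{-1}\cT_\ka(A_Q-A_{Q_\ka})$ obtained in the proof of \Cref{thm:A-amplitude}, where $\cG_\ka,\cT_\ka$ are explicit Volterra operators (built from the Gel'fand--Levitan kernel of $Q_\ka$) that are bounded and boundedly invertible on each $L^1((0,a))$. This reduces both statements to their $\ka=0$ analogues, which are then quoted: Simon's local uniqueness theorem for (i), and the stability estimate of \cite[Theorem~3]{Radial_Born} for (ii). Your Gr\"onwall argument for (i) is fine in spirit; in fact the bilinear estimate actually available, \eqref{est:A_difference}, has no $|Q_j|$ weight inside the integral (only an exponential prefactor depending on $\alpha_{Q_j}$), which makes (i) even easier than you anticipate.

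For (ii), however, there is a genuine gap. The final algebraic step fails: the inequality $X\le Y+C X^{1/p'}$ with $1/p'<1$ does \emph{not} force $X\lesssim Y^{\alpha}$ for any $\alpha>0$. Indeed $X=C^{p}$ satisfies it for every $Y\ge 0$, since $C\,X^{1/p'}=C\cdot C^{p/p'}=C\cdot C^{p-1}=C^{p}=X$; more generally, for small $X$ the sublinear term $C X^{1/p'}$ dominates $X$, so the inequality is vacuous precisely in the regime you need. The preceding interpolation is also incorrect: log-convexity between $L^1$ and $L^p$ gives the exponent $\theta=\tfrac{p-p'}{p'(p-1)}$ on $\|Q_1-Q_2\|_{L^1}$, not $1/p'$, and only when $p\ge 2$; for $p<2$ one has $p'>p$ and no truncation argument can bound an $L^{p'}$ norm using only an $L^p$ a priori bound. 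Thus the scheme you outline cannot produce the H\"older exponent $1/(p'+1)$, and (ii) must go through the $\ka=0$ reduction and the argument of \cite{Radial_Born}, which is not reproduced in this paper.
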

 The proofs of these results will be presented in \Cref{sec:subsec_1d}.

\subsection{From the \texorpdfstring{$A_\ka$}{Ak}-amplitude to the Born approximation} \

Some rather straightforward consequences of \Cref{thm:A-amplitude} in the context of the Calderón problem are gathered in the next result. Recall our convention:
\begin{equation} \label{eq:q_to_Q_log}
    q(x) -\ka = |x|^{-2}Q(-\log |x|), \qquad x\in \R_+.
\end{equation}
\begin{proposition} \label{prop:q_s_def}
     Let $ \ka \in \IR$ and $p$ be admissible. Take $q\in X_{p,\ka}(\IB^d)$, and let $Q$ be given by \eqref{eq:q_to_Q}. Denote by $A_{Q,\ka}$ the $A_\ka$-amplitude of $Q$ and write
        \begin{equation} \label{eq:qs_def}
        \qs(x) : = 
        \frac{A_{Q,\ka}(-\log |x|)}{|x|^2}, \qquad  x\in \IB^d\setminus\{0\}.
    \end{equation}
    The following statements hold.
    \begin{enumerate}[i)]
        \item $\qs \in L^1_{\ell_q}(\IB^d)$, 
    where $\ell_q := \max \left\{0,\max (z_Q, z_\ka ) -\nu_d \right\}$ ($z_\ka$ and $z_Q$  were defined in \eqref{eq:zk} and \eqref{eq:z_q_choice}, respectively, and the space $ L^1_{\ell_q}(\IB^d)$ in \eqref{eq:norm_L1_wheight}).
        \item For all $\ell \in \N_0$ with $\ell \ge \ell_q$,
        \begin{equation} \label{eq:lambda_q_qs}
        \sigma_\ell[\qs,\kappa]  =  \frac{1}{\abs{\Sp^{d-1}}}\int_{\IB^d} \qs(x) \varphi_{\ell}(\sqrt{\kappa}\abs{x})^2   \, \d x = (\lambda_\ell[q,\kappa]  -\lambda_\ell[0,\kappa])\varphi_{\ell}(\sqrt{\kappa})^2  .
        \end{equation}
        \item $\qs - q\in \cC(\ol{\IB^d}\setminus\{0\})$ and $\qs-q=0$ in $\partial \IB^d$. If $q \in \cC^{m}(\ol{\IB^d} \setminus \{0\})$  for some $m\in \N_0$ then $\qs - q$ belongs to $\cC^{m+1}(\ol{\IB^d} \setminus \{0\})$.
        \item Let $p>d/2$. There exist constants $C_{q,\ka}>0$ and $\beta_{q,\ka}>0$  such that
    \begin{equation} \label{eq:pol_sing}
        |\qs(x)-q(x)| \le C_{q,\ka}\frac{1}{|x|^{\beta_{q,\ka}}} \int_{|x|<|y|<1} |q(y)| \, \d y.
    \end{equation}
    \item If $p>d/2$, $\ka\le  (j_{0,1})^2$, and $q-\ka \ge 0$  $a.e.$ in $\IB^d$, then $ \qs \in L^1_\rad(\IB^d)$.
    \end{enumerate}
\end{proposition}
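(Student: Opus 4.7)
The plan is to translate every assertion of the proposition from the ball $\IB^d$ to the half-line $\R_+$ via the change of variables $t = -\log|x|$ and $Q(t) = e^{-2t}(q_0(e^{-t})-\ka)$, and then invoke the corresponding item of \Cref{thm:A-amplitude} applied to this $Q$. The foundational computation is the identity
\begin{equation*}
    \norm{F_A}_{L^1_s(\IB^d)} = \abs{\Sp^{d-1}}\,\norm{A}_{L^1_s(\R_+)}, \qquad F_A(x) := \frac{A(-\log\abs{x})}{\abs{x}^2},
\end{equation*}
valid for every $s\in\R$ and $A\in L^1_\loc(\R_+)$, which follows at once from the substitution $r = e^{-t}$ together with the coincidence $2\nu_d + 2 = d$. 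Specialized to $F_A = \qs$, this sets up a bidirectional dictionary between the weighted $L^1$ scales on the two sides, and in particular (at $s = \nu_d$) identifies $\qs\in L^1_\rad(\IB^d)$ with $A_{Q,\ka}\in L^1(\R_+)$.

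For (i), \Cref{thm:A-amplitude} provides $A_{Q,\ka}\in L^1_s(\R_+)$ for $s = \max(z_Q,z_\ka)$; the dictionary transfers this to $\qs\in L^1_s(\IB^d)$, and the claimed membership in $L^1_{\ell_q}(\IB^d)$ follows from the monotonicity of the scale $\{L^1_{s'}(\IB^d)\}_{s'}$ in $s'$ together with the definition of $\ell_q$. For (ii), I would combine \Cref{lemma:DtN_mQ} (which writes $\lambda_\ell[q,\ka] - \lambda_\ell[0,\ka]$ in terms of $m_{Q_\ka}-m_Q$ evaluated at $-(\ell+\nu_d)^2$), \Cref{lemma:m_function_ka} (which gives $u_{\ell+\nu_d}(-\log r,\ka) = r^{\nu_d}\varphi_\ell(\sqrt{\ka} r)/\varphi_\ell(\sqrt{\ka})$), and the $A_\ka$-amplitude representation of \Cref{thm:A-amplitude}, which is valid precisely when $\ell + \nu_d \geq \max(z_Q,z_\ka)$, i.e.\ when $\ell \geq \ell_q$. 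The substitution $r = e^{-t}$ then transforms $\int_0^\infty A_{Q,\ka}(t)\, u_{\ell+\nu_d}(t,\ka)^2\,\d t$ into $\sigma_\ell[\qs,\ka]/\varphi_\ell(\sqrt{\ka})^2$, which yields \eqref{eq:lambda_q_qs}.

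For (iii), I would rewrite
\begin{equation*}
    \qs(x) - q(x) = \abs{x}^{-2}\bigl(A_{Q,\ka}(-\log\abs{x}) - Q(-\log\abs{x}) + Q_\ka(-\log\abs{x})\bigr) - \ka,
\end{equation*}
so that continuity on $\overline{\IB^d}\setminus\{0\}$, vanishing on $\partial\IB^d$, and $\cC^{m+1}$ smoothness all reduce to the corresponding assertions in \Cref{thm:A-amplitude}(i)--(iii), combined with the smoothness of $x\mapsto -\log|x|$ on $\IB^d\setminus\{0\}$. For (iv), I would insert the pointwise bound \eqref{eq:est_thm_1d_2}; the hypothesis $p>d/2$ and \eqref{ep:estQ} give $Q\in L^1(\R_+)$ so $\alpha_Q$ is uniformly bounded, which promotes the exponential factor $e^{t(2\alpha_Q + |\ka|)}$ to a polynomial weight $|x|^{-\beta_{q,\ka}}$, while the inner integral $\int_0^t|Q-Q_\ka|\,\d s$ pulls back (modulo bounded Jacobian factors) to $\int_{|x|<|y|<1}|q(y)|\,\d y$. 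For (v), the hypotheses transfer directly: $q-\ka\geq 0$ a.e.\ gives $Q\geq 0$ a.e.; a Hölder refinement of \eqref{ep:estQ}, still based on $p>d/2$, yields $(1+t)Q\in L^1(\R_+)$; and $\ka\leq(j_{0,1})^2$ matches the hypothesis of \Cref{thm:A-amplitude}(iv), which then gives $A_{Q,\ka}\in L^1(\R_+)$. The dictionary finishes the proof.

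The main technical obstacle lies in the pullback for (iv): one must choose $\beta_{q,\ka}$ so that the exponential factor from \Cref{thm:A-amplitude}(ii) and the residual Jacobian weights coming from $r=e^{-t}$ collapse cleanly into the single power $|x|^{-\beta_{q,\ka}}$ appearing in \eqref{eq:pol_sing}. The remaining parts are routine bookkeeping along the universal translator established in the first paragraph.
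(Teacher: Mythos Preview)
Your proposal is correct and follows essentially the same route as the paper's proof: set up the change-of-variables dictionary $t=-\log|x|$ between $L^1_s(\IB^d)$ and $L^1_s(\R_+)$, then invoke the matching item of \Cref{thm:A-amplitude} for each of (i)--(v). The paper additionally inserts a brief continuity-in-$\ka$ argument in part (ii) to cover $\ka\in\Spec_{H^1_0}(-\Delta)$, and your displayed formula in (iii) has a stray $-\ka$ (since $|x|^{-2}Q_\ka(-\log|x|)=-\ka$ already), but neither affects the substance of the argument.
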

\begin{proof} 
    To check (i) simply note that $A_{Q,\ka} \in L^1_{s}(\R_+)$ with   $s= \max(z_Q,z_\ka)$, then $\qs \in L^1_{\ell_q}(\IB^d)$, by \Cref{thm:A-amplitude}. Changing  variables $t=-\log r$ gives:
    \begin{equation} \label{eq:L1_1d_ball}
        \int_{\R_+}    |A_{Q,\ka}(t)|  e^{-2st} \, \d t = \frac{1}{\abs{\Sp^{d-1}}}\int_{\IB^d} |\qs(x)| |x|^{2(s-\nu_d)} \, \d x.
    \end{equation}
    Let us prove (ii). 
    Assume first that, in addition, $\ka \in \IR \setminus \Spec_{H^1_0}(-\Delta)$.  \Cref{lemma:m_function_ka} ensures that
    \begin{equation*} 
    e^{-\nu_d t} \varphi_\ell(\sqrt{\ka}e^{-t})=u_{\ell+\nu_d}(t,\ka)\varphi_\ell(\sqrt{\ka}).
    \end{equation*}
    If $Q$ is given by \eqref{eq:q_to_Q} and $\ell \ge \ell_q$ then, by \Cref{lemma:DtN_mQ} and \Cref{thm:A-amplitude},
    \begin{multline*}
        \lambda_\ell[q,\kappa]  -\lambda_\ell[0,\kappa]  
        = -m_Q\left ( -(\ell +\nu_d)^2  \right ) + m_{Q_\ka}\left ( -(\ell +\nu_d)^2  \right )  \\
        = -\int_0^\infty A_{Q,\ka}(t) u_{\ell+\nu_d}(t,\ka)^2 \, \d t 
        = -\int_0^\infty A_{Q,\ka}(t)  e^{-2\nu_d t} \frac{\varphi_\ell(\sqrt{\ka}e^{-t})^2}{\varphi_\ell(\sqrt{\ka})^2} \, \d t.
    \end{multline*}
    Changing variables via \eqref{eq:change_var_radial} and $t= -\log r$ we obtain
    \begin{equation*}
        \lambda_\ell[q,\kappa]  -\lambda_\ell[0,\kappa]  
        = \int_0^1 \frac{A_{Q,\ka}(-\log r)}{r^2}  \frac{\varphi_\ell(\sqrt{\ka}r)^2}{\varphi_\ell(\sqrt{\ka})^2}  r^{d-1} \, \d r ,
    \end{equation*}
    which yields
    \begin{equation*}
       \left( \lambda_\ell[q,\kappa]  -\lambda_\ell[0,\kappa] \right) \varphi_\ell(\sqrt{\ka})^2 = \frac{1}{\abs{\Sp^{d-1}}}\int_{\IB^d} \frac{A_{Q,\ka}(-\log |x|)}{|x|^2} \varphi_\ell(\sqrt{\ka}|x|)^2   \, \d x, \qquad \forall\ell \ge \ell_q,
    \end{equation*} 
    which proves \eqref{eq:lambda_q_qs} when $\ka \notin\Spec_{H^1_0}(-\Delta)$. To see that the result holds only under the assumption that $\ka$ is not a Dirichlet eigenvalue of $-\Delta+q$  notice that both sides of the above equations can be extended continuously  to $\Spec_{H^1_0}(-\Delta)$  by \eqref{eq:defvphi} and \Cref{lemma:m_function_ka}.

    Property (iii) follows from \Cref{thm:A-amplitude} (i) and (iii):  $A_{Q,\ka}-Q$ is continuous on $\R_+$ and can be extended continuously by zero to $[0,\infty)$.  Then $\qs -q$ is continuous on $\IB^d\setminus\{0\}$, and can be continuously extended by zero to $\ol{\IB^d}\setminus\{0\}$. 
    
    Part (iv) follows from estimate \eqref{eq:est_thm_1d_2} by \eqref{eq:q_to_Q_log} and a change of variables. To see this, recall that  $Q\in L^1(\R_+)$ when $p>d/2$ (see \Cref{lemma:DtN_mQ}), then $\alpha_Q(t) \le \norm{Q}_{L^1(\R_+)}$ for all $t\in \R$. 
    Hence, there exists $\beta_{q,\ka}>0$ such that $|x|^{ -(2\alpha_{Q}(-\log|x|) +|\ka|) } \le |x|^{-\beta_{q,\ka}} $.

    We now prove (v). Using a similar estimate to \eqref{ep:estQ}, one can verify that, if $q \in L_\rad^p(\IB^d;\R)$ with $p>d/2$, then $Q(1+|\cdot|)^\alpha \in L^1(\R_+)$ for all $\alpha \ge 0$.
    Also if $q-\ka \ge 0$ and $\ka <(j_{0,1})^2$, then $Q \ge 0$ $a.e.$ in $\R_+$. Thus, by \Cref{thm:A-amplitude} (iv) we obtain that $A_{Q,\ka} \in L^1(\R_+)$. 
    Using \eqref{eq:L1_1d_ball} with $s=0$, it follows that $ \qs \in L^1_\rad(\IB^d)$.
\end{proof}

\section{Proof of the main theorems}\label{sec:proofs}

\subsection{Proof of \texorpdfstring{\Cref{main-thm:existence_Born}}{Theorem 1} and   \texorpdfstring{\Cref{main-thm:basic_properties}}{Theorem 2}}

Consider the function $\qs$ defined in \Cref{prop:q_s_def}.  
Since $\qs\in L^1_{\ell_0}(\IB^d)$, this function might have an algebraic singularity at the origin. Here we identify $\qs$ with its extension by zero to all $\R^d$.
    
Let $F\in\cE'_\rad(\R^d)$ be a regularization of the function $\qs$ in the sense of \cite[Proposition 1 p. 11]{GelShilVol1} (since $\qs$ is radial, $F$ can also be chosen to be radial). By definition, the distribution $F$ satisfies
\begin{equation} \label{eq:reg}
    \br{F,\phi}_{\cE'\times\cC^\infty} = \br{\qs, \phi}_{\cE'\times\cC^\infty}, \qquad \text{for all } \, \phi \in \cC_c^\infty(\R^d\setminus\{0\}).
\end{equation}
It turns out that there exists $N\in \N$, such that  $N\ge \ell_q$ and\footnote{Note that, although the moments in the left-hand side must be understood in distributional sense (see \eqref{e:moment_distributions}), the ones in the right-hand side are given by \eqref{eq:lambda_q_qs}.}
\begin{equation} \label{eq:moment_identity}
    \sigma_\ell[F,\ka] = \sigma_\ell[\qs,\ka] , \qquad \text{for all } \ell \ge N.
\end{equation}
To see why \eqref{eq:moment_identity} holds, write $w_\alpha(x):=|x|^\alpha$ and note that $w_{2\ell_q}\qs  \in L^1(\IB^d)$, so in particular $w_{2\ell_q}\qs\in \cE_\rad'(\R^d)$. The identity \eqref{eq:reg} implies that
\begin{equation*} 
    w_{2\ell_q}F - w_{2\ell_q}\qs  = \mu\in \cE_\rad'(\R^d),
\end{equation*}
and $\mu$ is a distribution supported in $\{0\}$. Since $\mu$ must be a finite linear combination of Dirac deltas and its derivatives, there exists $N\ge \ell_q$ such that $ w_{2\ell}F  = w_{2\ell}\qs$ for all $\ell \ge N$. Since ${\varphi_{\ka,\ell}}(|x|)^2 \sim |\sqrt{\ka}x|^{2\ell}$ as $|x|\to 0$ by \eqref{eq:defvphi}, we conclude that
\begin{equation*} 
    \varphi_{\ka,\ell}^2F  = \varphi_{\ka,\ell}^2\qs,  \qquad \text{for all }\ell\ge N,
\end{equation*}
in the sense of distributions. This proves \eqref{eq:moment_identity}.

As a consequence of \eqref{eq:moment_identity} and \eqref{eq:lambda_q_qs} we have
\begin{equation*}
    \sigma_\ell[F,\ka] = (\lambda_\ell[q,\kappa] - \lambda_\ell[0,\kappa])\varphi_{\ell}(\sqrt{\ka})^2, \qquad \text{for all } \ell \ge N.
\end{equation*}
Therefore, by \Cref{main-thm:Fourier Radial}  we must have that
\begin{equation*}                     
    \F (F)(\xi) = (2\pi)^d \sum_{\ell=0}^\infty (\lambda_\ell[q,\kappa] - \lambda_\ell[0,\kappa])\varphi_{\ell}(\sqrt{\ka})^2  Z_{\ell,d}\left(1-\frac{\abs{\xi}^2}{2\kappa}\right)  +P_N(|\xi|^2),
\end{equation*}
where $P_N$ is a polynomial. This identity implies that  the first term in the right-hand side is the Fourier transform of a tempered distribution that we denote $\qb$, so that  we get the identity $ \F (\qb)(\xi) = \F (F)(\xi)-P_N(|\xi|^2)$. Since $P_N(|\xi|^2)$ is the Fourier transform of a distribution supported in the origin, $\qb$ is also supported in $\ol{\IB^d}$. Also, $\qb$ coincides with $\qs$ outside the origin:
\begin{equation}\label{eq:qs_qb_identity}
        \qb|_{\IB^d\setminus \{0\}} = \qs,
\end{equation}
 which implies that $\qb \in \cB_d$ (here we are using the identification of elements of $\cB_d$ extended by zero, with elements of $\cE'_\rad(\R^d)$).
Finally, again by \Cref{main-thm:Fourier Radial}, $\qb$ satisfies \eqref{eq:Born_moments}. Note that once the existence of the Born approximation at energy $\ka$ has been established, \Cref{main-thm:basic_properties} follows from \Cref{prop:q_s_def} and \eqref{eq:qs_qb_identity}. 

The proof of \Cref{main-thm:existence_Born} will be concluded once we prove identity \eqref{eq:linear_cgos}.
The conditions on $\ka$ and $q$ in the statement guarantee that the operator $\Lambda_{\kappa,q}-\Lambda_{\kappa,0}$ is well defined.
Recall that $\cP_{\ell,d}$ denotes the orthogonal projection of $L^2(\IS^{d-1})$ onto $\gH_\ell^d$, and that $\cP_{\ell,d} \ol{f} = \ol{\cP_{\ell,d}f} $. Therefore we have that
    \begin{equation} \label{eq:pre_series} 
       \nonumber \hp{\cc{e_{\zeta_1}}}{(\Lambda_{\kappa,q}-\Lambda_{\kappa,0})e_{\zeta_2}}_{L^2(\Sp^{d-1})} 
       =\sum_{\ell=0}^\infty(\lambda_\ell[q,\kappa]-\lambda_\ell[0,\kappa])\hp{\cc{\mathcal{P}_{\ell,d}{e_{\zeta_1}}}}{\mathcal{P}_{\ell,d}e_{\zeta_2}}_{L^2(\Sp^{d-1})}.
    \end{equation}
    The absolute convergence of this series is immediate from the boundedness in $L^2(\IS^{d-1})$ of   $\Lambda_{\kappa,q}-\Lambda_{\kappa,0}$ and the estimate
    \begin{align*}\sum_{\ell=0}^\infty\abs{\hp{\mathcal{P}_{\ell,d}\cc{e_{\zeta_1}}}{\mathcal{P}_{\ell,d}e_{\zeta_2}}_{L^2(\Sp^{d-1})}}\leq \norm{e_{\zeta_1}}_{L^2(\Sp^{d-1})}\norm{e_{\zeta_2}}_{L^2(\Sp^{d-1})}<\infty.
    \end{align*}
    Since $2(\zeta_1\cdot\zeta_2)=-(\abs{\xi}^2-2\kappa)$,
    \Cref{prop:mainradial} shows that the series in the right-hand side of \eqref{eq:pre_series} coincides exactly  with the series given in \eqref{eq:Born_Fourier_introduction} which concludes the proof of \eqref{eq:linear_cgos}.

\subsection{Proof of \texorpdfstring{\Cref{main-thm:stability}}{Theorem 3}}

    Recall $U_b:= \{x\in\IB^d: b<|x|< 1\}$. By  \eqref{eq:qs_def}, \eqref{eq:q_to_Q_log} and \eqref{eq:qs_qb_identity} we have
    \begin{equation*}
        q_j(x) -\ka = |x|^{-2}Q_j(-\log |x|),\qquad q_{j,\ka}^\mB(x)  = 
        |x|^{-2} A_{Q_j,\ka}(-\log |x|), 
    \qquad x\in \IB^d.
    \end{equation*}
    Then the uniqueness  result follows immediately from \Cref{thm:stability_A_function}(i) with $a = -\log b$.
    
    For the stability notice that if $\max_{j=1,2} \norm{q_j}_{L^p(U_b)}  <  K$, then one can choose a constant $M>0$ (dependent on $\ka$) such that \eqref{est:A_condition}. Applying \Cref{thm:stability_A_function}(ii), and letting $a = -\log b$, we obtain
    \begin{equation*}
        \int_{b<|x|<1} | q_1(x)- q_2(x)| |x|^{2-d}  \,  \d x =    \int_0^a | Q_1(t)- Q_2(t)| \, \d t,
    \end{equation*}
    as well as the analogous identity for the difference of the Born approximations and the $A_\ka$ amplitudes. Bounding above and below the weight $|x|^{2-d}$ finishes the proof of the theorem.

\subsection{Proof of  \texorpdfstring{\Cref{main-thm:limit}}{Theorem 4}}
    We assume from the start that $\kappa<-\norm{q}_{L^\infty(\IB^d)}$, so that 
    \[q(x)-\kappa=q(x)+\abs{\kappa}>0
    ,\qquad x\in\IB^d.\]
    Let $\zeta_1,\zeta_2\in \C^d$ satisfy \eqref{eq:z_1_z_2_cond}. Then $(-\Delta-\kappa)e_{\zeta_i}=0$ for $i=1,2$ and by Alessandrini's identity \eqref{e:alessandrini1} we have
    \begin{equation*}
        \hp{\cc{e_{\zeta_1}}}{(\Lambda_{q,\kappa}-\Lambda_{0,\kappa})e_{\zeta_2}}_{L^2(\Sp^{d-1})}=\int_{\IB^d} q(x)\,e_{\zeta_1}(x)\,v(x)\,\d x,
    \end{equation*}    
    where $v$ satisfies the equation on the right of \eqref{e:alessandrini2} with $f= e_{\zeta_2}$.
    Notice that $v-e_{\zeta_2}$ satisfies the equation
    \begin{equation*}
            \left\{
    \begin{array}{rll}
    (-\Delta  + q-\kappa) (v-e_{\zeta_2}) &=-q e_{\zeta_2}   &\text{ in }   \IB^d,\\
    v  &=0, &\text{ on }   \IS^{d-1},
    \end{array}\right.
    \end{equation*}
    so can rewrite the previous equality as
    \begin{equation*}
        \hp{\cc{e_{\zeta_1}}}{(\Lambda_{q,\kappa}-\Lambda_{0,\kappa})e_{\zeta_2}}_{L^2(\Sp^{d-1})}-\F q(\xi)=-\hp{q\cc{e_{\zeta_1}}}{\cR_q(\kappa)(q e_{\zeta_2})}_{L^2(\IB^{d})}.
    \end{equation*}
    Since $q-\kappa>0$, we have that $\cR_q(\kappa)(q e_{\zeta_2})=\int_0^\infty e^{- t(-\Delta  + q-\kappa)}(q e_{\zeta_2})\d{t}$. Writing the semigroup via the Feynman–Kac formula \cite[Section 1.3]{Sznitman} and exchanging integral and expectation (which is justified by Fubini and $\norm{qe_{\zeta_2}}_{L^\infty(\IB^d)}<\infty$) we obtain
    \[\cR_q(\kappa)(q e_{\zeta_2})(x)=\frac{1}{2}\IE_x\left[\int_0^{\tau_{\IB^d}}(q e_{\zeta_2})(W_t)\exp\left(-\int_0^t \frac{q(W_s)+\abs{\kappa}}{2}\d s\right)\d t\right],\]
    where $W_t$ is a standard $d$-dimensional Brownian motion and $\tau_{\IB^d}$ is the exit time of $\IB^d$. Therefore
    \begin{align*}
            \abs{\cR_q(\kappa)(q e_{\zeta_2})(x)}&\leq
        \frac{\norm{q}_{L^\infty(\IB^d)}}{2}\IE_x\left[\int_0^{\tau_{\IB^d}}\abs{e_{\zeta_2}(W_t)}\exp\left(-\frac{t}{2}\left(\abs{\kappa}-\norm{q}_{L^\infty(\IB^d)}\right)\right)\d t\right]\\
        &\leq \frac{\norm{q}_{L^\infty(\IB^d)}}{2}\IE_x\left[\int_0^{\infty}(\chi_{\IB^d}e_{\Re\zeta_2})(W_t)\exp\left(-\frac{t}{2}\left(\abs{\kappa}-\norm{q}_{L^\infty(\IB^d)}\right)\right)\d t\right]\\
        &=\norm{q}_{L^\infty(\IB^d)}\cR^{\IR^d}_{0}\left(-\abs{\kappa}+\norm{q}_{L^\infty(\IB^d)}\right)(\chi_{\IB^d}e_{\Re\zeta_2})(x),
    \end{align*}
    where $\cR^{\IR^d}_{0}(-z^2)$ is the inverse of $-\Delta+z^2$ defined in $L^2(\R^d)$. This operator in the whole space has the Green function
    \[G_{-z^2}(r)\coloneqq(2\pi)^{-\frac{d}{2}}\left(\frac{z}{r}\right)^{\nu_d}K_{\nu_d}\left(z r\right),\qquad z>0,\]
    where $K_\nu$ is the modified Bessel function of order $\nu$.
    Hence, with $z=\sqrt{\abs{\kappa}-\norm{q}_{L^\infty(\IB^d)}}>0$, we have
    \begin{align*}
    \abs{\hp{q\cc{e_{\zeta_1}}}{\cR_q(\kappa)(q e_{\zeta_2})}_{L^2(\IB^{d})}} &\leq \hp{\abs{q\cc{e_{\zeta_1}}}}{\abs{\cR_q(\kappa)(q e_{\zeta_2})}}_{L^2(\IB^{d})}\\
    &=\hp{\abs{q}e_{-\Re\zeta_2}}{\abs{\cR_q(\kappa)(q e_{\zeta_2})}}_{L^2(\IB^{d})}\\
    &\leq \norm{q}^2_{L^\infty(\IB^d)}\int_{\IB^d}e_{-\Re\zeta_2}(x)\int_{\IB^d}G_{-z^2}(\abs{y-x})e_{\Re\zeta_2}(y)\d y \d x\\
    &=\norm{q}^2_{L^\infty(\IB^d)}\int_{\IB^d}\int_{-x+\IB^d}G_{-z^2}(\abs{y})e_{\Re\zeta_2}(y)\d y \d x\\
    &\leq\norm{q}^2_{L^\infty(\IB^d)}|\IB^d|\int_{2\IB^d}G_{-z^2}(\abs{y})e_{\Re\zeta_2}(y)\d y.
    \end{align*}
    Once again, the conditions \eqref{eq:z_1_z_2_cond} on $\zeta_1,\zeta_2$ imply that they must be of the form 
    \[\zeta_{\genfrac{}{}{0pt}{3}{1}{2}}=-\frac{i}{2}\xi\mp\sqrt{-\kappa+\frac{\abs{\xi}^2}{4}}\widehat{\xi}_\perp=-\frac{i}{2}\xi\mp\sqrt{\abs{\kappa}+\frac{\abs{\xi}^2}{4}}\widehat{\xi}_\perp,\]
    where $\widehat{\xi}_\perp\in\R^d$ is any unit vector perpendicular to $\xi$. Since our last integral is invariant under rotations of $\Re\zeta_2=\sqrt{\abs{\kappa}+\frac{\abs{\xi}^2}{4}}\widehat{\xi}_\perp$, we can choose $\widehat{\xi}_\perp$ to be $\widehat{x}_1$, so that when using polar coordinates we obtain
    \begin{align*}
        \int_{2\IB^d}G_{-z^2}&(\abs{y})e_{\Re\zeta_2}(y)\d y\\
        &=\frac{2\pi^\frac{d-1}{2}}{\Gamma(\frac{d-1}{2})}\int_0^2\int_0^\pi G_{-z^2}(r)e^{\abs{\Re\zeta_2}r\cos\theta}(\sin\theta)^{d-2}r^{d-1}\d\theta\d r\\
        &=(2\pi)^\frac{d}{2}\int_0^2 \frac{G_{-z^2}(r)I_{\nu_d}(\abs{\Re\zeta_2}r)r^{d-1}}{(\abs{\Re\zeta_2}r)^{\nu_d}}\d r\\
        &=\left(\frac{z}{\abs{\Re\zeta_2}}\right)^{\nu_d}\int_0^2 K_{\nu_d}(z r)I_{\nu_d}(\abs{\Re\zeta_2}r)r \d r\\
        &=\frac{1}{\abs{\Re\zeta_2}^2-z^2}\left(\frac{z}{\abs{\Re\zeta_2}}\right)^{\nu_d}\\
        &\qquad\times\left( -\left(\frac{\abs{\Re\zeta_2}}{z}\right)^{\nu_d}+2\abs{\Re\zeta_2}K_{\nu_d}(2z)I_{\nu_d+1}(2\abs{\Re\zeta_2})+2 z K_{\nu_d+1}(2z)I_{\nu_d}(2\abs{\Re\zeta_2})\right).
    \end{align*}
    The proof is finished by the asymptotics \cite[Section 7.23]{Watson1944}
    \[K_\nu(x)=\sqrt{\frac{\pi}{2x}}e^{-x}(1+o(1)),\qquad I_\nu(x)=\sqrt{\frac{1}{2\pi x}}e^{x}(1+o(1)),\qquad x\to\infty.\]

\section{Proof of  \texorpdfstring{\Cref{thm:A-amplitude}}{Theorem 4.4} and \texorpdfstring{\Cref{thm:stability_A_function}}{Theorem 4.5}   }\label{sec:subsec_1d}

In this section we present the proofs of our two main auxiliary results.

\begin{proof}[Proof of \Cref{thm:A-amplitude}]
    Start by noticing that, by \eqref{eq:A_amplitude} and \Cref{lemma:m_function}, for all $z \ge s =\max\left(z_Q, z_\ka \right)$ one has
    \begin{align*}
        m_Q(-z^2) -m_{Q_\ka}(-z^2) &= m_Q(-z^2) -m_0(-z^2) -(m_{Q_\ka}(-z^2)-m_0(-z^2)) 
        \\ &= \int_{0}^{\infty}  \left (A_Q(t)-A_{Q_\ka}(t)\right)  e^{-2zt} \, \d t,
    \end{align*}
    where $A_Q-A_{Q_\ka}$ belongs to $L^1_s(\R_+)$. Let  $\psi_z(\cdot;\ka)\in L^2(\IR_+)$ denote the Jost solution of \eqref{eq:uz_def_b}. This solution is characterized by identities \eqref{eq:jost_asymptotic} and \eqref{eq:id_jost_sol}. Using  \Cref{lemma:Change_basis_3} iv) we can write
    \begin{equation*}
         m_Q(-z^2) -m_{Q_\ka}(-z^2) = \frac{1}{\psi_z(0;\ka)^2}  \int_{0}^{\infty} \cT_{\ka} \left (A_Q-A_{Q_\ka}\right) (t) e^{-2zt} \, \d t,
    \end{equation*}
    for all $z \ge s$. Then, by \Cref{lemma:Change_basis} iv), we have
     \begin{equation*}
         m_Q(-z^2) -m_{Q_\ka}(-z^2) =\int_{0}^{\infty}  \cG_\ka^{-1} \cT_\ka \left (A_Q-A_{Q_\ka}\right) (t)  \frac{\psi_z(t;\ka)^2}{\psi_z(0;\ka)^2}  \, \d t,
    \end{equation*}
    for all $z \ge s$. This identity can be analytically extended for all $z\in\IC$ such that  $\Re(z) \ge s$,
    which proves the first statement of the theorem with
    \begin{equation} \label{eq:A_Q}
        A_{Q,\ka} :=  \cG_\ka^{-1} \cT_\ka \left (A_Q-A_{Q_\ka}\right).
    \end{equation}

    On the one hand by  \cite{Simon_spectral_I} we have always that $A_Q-Q \in \cC(\R_+)$,  and that   $A_Q-Q \in \cC^{m+2}(\R_+)$ if $Q\in \cC^{m}(\R_+)$ for all $m\in \N_0$. 
    \\
    On the other hand, by \Cref{lemma:Change_basis} and \Cref{lemma:Change_basis_3},
    $\cG_\ka^{-1} -\Id$ and $\cT_\ka-\Id$ are both Volterra operators with a smooth kernel on $D$, and thus satisfy the conditions of \Cref{lemma:short_continuity}. Therefore
    $A_{Q,\ka} - (A_Q - A_{Q_\ka})$ belongs to $\cC([0,\infty))$ always and to $\cC^{m+1}([0,\infty))$ if $Q \in \cC^{m}([0,\infty))$ with $m\in \N_0$;  the analogous result for $A_{Q,\ka} -Q$ follows immediately (recall that $Q_\ka$, and hence $A_{Q_\ka}$, are smooth). The fact that $A_{Q,\ka}-Q$ can be extended continuously by zero to $[0,\infty)$ follows from estimate \eqref{eq:est_thm_1d_2} that we now prove.

    Using \eqref{eq:est_Gk} and \eqref{eq:est_Tk} gives
    \begin{equation}\label{eq:est_gronw}
        |\left(\cG_\ka^{-1}\cT_{\ka} -\Id\right)F (t)| \le |\ka| C_\ka \int_0^t |F(s)| \, \d s ,
    \end{equation}
    for some $C_\ka>0$.
    Applying this with $F =A_Q-A_{Q_\ka}$ yields the estimate
    \begin{equation} \label{eq:est_thm_1d_1}
            |A_{Q,\ka}(t) - (A_Q(t)-A_{Q_\ka}(t))| \le |\ka| C_\ka \int_0^t |A_Q(s)-A_{Q_\ka}(s)| \, \d s .
    \end{equation}
     Estimate \eqref{eq:est_thm_1d_2} follows adding and subtracting $Q_\ka-Q$ in the LHS of  \eqref{eq:est_thm_1d_1} and using  the estimate
    \begin{multline} \label{est:A_difference} 
       \left |A_{Q_1}(t) -Q_1(t) - \left( A_{Q_2}(t) -Q_2(t) \right) \right | \\
       \le (\alpha_{Q_1}(t) + \alpha_{Q_2}(t))  e^{t(\alpha_{Q_1}(t) + \alpha_{Q_2}(t))}  \int_{0}^t |Q_1(s)-Q_2(s)|\, \d s ,
    \end{multline}
    twice, which can be found in \cite[Section~4.1]{Radial_Born} (recall that $\alpha_Q$ was defined in \eqref{eq:alpha_q_def}).

    It remains to prove property (iv). If $Q$ is positive, the operator $H_Q = (-\partial_t^2   + Q(t) )$ has no negative eigenvalues nor a zero resonance, as remarked in \cite{Simon_spectral_III}. Then it follows from \cite[Theorem 3]{Simon_spectral_III} that $A_Q \in L^1(\R_+)$. Also, we have $A_{Q_\ka}  \in L^1(\R_+)$ if $\ka < (j_{0,1})^2$ by \Cref{lemma:m_function}, since $z_\ka$ can be taken to be $0$. It follows that $A_{Q,\ka} \in  L^1(\R_+)$ by \eqref{eq:A_Q}, since $\cG_\ka^{-1} \cT_\ka $ is bounded on $L^1(\R_+)$ by Lemmas \ref{lemma:Change_basis} and \ref{lemma:Change_basis_3}. This finishes the proof of the Theorem.
\end{proof}

\begin{proof}[Proof of \Cref{thm:stability_A_function}]
    The first statement was proved by Simon in \cite[Theorem 1.5]{Simon_spectral_I} for the case $\ka =0$. The case $\ka \neq 0$ follows from this and \eqref{eq:A_Q} by the local injectivity of the  operators $\cG_\ka^{-1}\cT_{\ka} $, which satisfy that $\cG_\ka^{-1}\cT_{\ka}F|_{(0,a)}$ vanishes iff $F|_{(0,a)}$ vanishes due to $\cG_\ka$, $\cG_\ka^{-1}$, $\cT_{\ka}$, $\cT_{\ka}^{-1}$ being Volterra operators (see \Cref{app:jost}).
    
    We now prove the stability estimate.
     Under the assumptions for $Q_j$ $j =1,2$ in the statement, by \cite[Theorem 5.1]{Radial_Born} we have that 
     \begin{equation}\label{eq:stability_au}
        \int_0^{a}  \left| Q_1(t)-Q_2(t) \right| \,\d t    < C(a,M)
        \left(\int_0^a  \left| A_{Q_1}(t) - A_{Q_2}(t) \right| \, \d t\right)^{1/(p'+1)},
    \end{equation}
    provided that
    \begin{equation*} 
        \int_0^a  \left| A_1(t) - A_2(t) \right| \, \d t < \min(1,a)^{(1+p')/p'}.
    \end{equation*}
    By \eqref{eq:A_Q} we have that $  A_{Q_1}- A_{Q_2} =  \cT_\ka^{-1} \cG_\ka \left ( A_{Q_1,\ka} - A_{Q_2,\ka}\right)$, and therefore it follows that
    \begin{equation*}
        \int_0^a  \left| A_{Q_1}(t) - A_{Q_2}(t) \right| \, \d t \le  C(a,\ka) \int_0^a  \left| A_{Q_1,\ka}(t) - A_{Q_2,\ka}(t) \right| \, \d t,
    \end{equation*}
    by \Cref{lemma:Change_basis_3} and the boundedness of $\cG_\ka$ in $L^1(\R_+)$ (see \Cref{lemma:Change_basis}).
    Inserting this in \eqref{eq:stability_au} finishes the proof of the theorem.
\end{proof}


\appendix

\section{Jost solutions and intertwining operators}\label{app:jost}

In this Appendix, we gather some properties of Jost solutions and related operators that are used in the proofs of \Cref{thm:A-amplitude} and \Cref{thm:stability_A_function}.
Given $\ka\in\IR$ set $Q_\ka(x):=-\ka e^{-2x}$. For every $z\in \IC_+$, the problem 
\begin{equation}    \label{eq:uz_def_b}
(-\partial_x^2 + Q_\ka    ) u_z = -z^2 u_z, \qquad \text{ on }   \R_+,
\end{equation}
possesses a unique solution $\psi_z(\cdot;\ka)\in L^2(\IR_+)$ satisfying the asymptotics:
\begin{equation} \label{eq:jost_asymptotic}
    \psi_z(x;\ka) = e^{-zx} (1+ o(1)) \qquad \text{as } \, x\to \infty.
\end{equation}
These solutions are called the \textit{Jost solutions} of equation \eqref{eq:uz_def_b}, 
and are known to satisfy the identity
\begin{equation} \label{eq:id_jost_sol} 
    \psi_z(x;\ka) = e^{-zx} + \int_{x}^\infty K_\ka (x,t) e^{-zt} \, \d t,
\end{equation}
where 
\begin{equation*}
    K_\ka\in\cC^\infty(\{(x,t)\in\IR^2 : 0\leq x\leq t\})
\end{equation*}
is the \textit{Gelfand-Levitan} kernel associated to the smooth potential $Q_\ka$, see for instance \cite[Chapter~3]{Marchenko_87}. By \cite[Lemma~3.3.1]{Marchenko_87}, this kernel satisfies the estimate
\begin{equation}\label{eq:marchenko_estimate}
|K_\ka(x,t)|\leq \frac{|\ka|e^{\frac{|\ka|}{4}}}{4}e^{-(x+t)} ,\qquad \text{for all } 0\leq x\leq t.  
\end{equation}

Set $D := \{(x,t)\in\IR^2 : 0\leq t \leq x\}$, let $G_\ka\in\cC^\infty(D)$ be given by
\begin{equation} \label{eq:G_kernel_explicit_2}
        G_\ka(x,t)  :=      4K_\ka(t,2x-t) 
       +    2\int_{t}^{2x-t}   K_\ka(t,s)K_\ka(t,2x-s) \,\d s, 
\end{equation}
and define, for $F\in L^1_{\loc}(\R_+)$:
\begin{equation} \label{eq:G_kernel_explicit_1}
    \cG_\ka  F (x) :=  F(x) + \int_0^x  G_{\ka}(x,t) F(t)\, \d t.
\end{equation}
This is a Volterra integral operator. In the next result, we gather  some simple properties of this class of operators. Recall that, for $s\in\IR$, $L^1_s(\R_+)$ stands for the $F\in L^1_\loc(\IR_+)$ such that 
\begin{equation*}
    \norm{F}_{L^1_s(\IR_+)}:=\int_0^\infty|F(x)|e^{-2sx}\,\d x <\infty.
\end{equation*}    
\begin{lemma} \label{lemma:short_continuity}
    Let $H\in\cC^\infty(D)$; the \textit{Volterra operator} of kernel $H$ is defined as
    \begin{equation*}
        \cT_H F(x) := \int_0^x H(x,t) F(t) \, \d t,\qquad F\in L^1_{\loc}(\R_+).
    \end{equation*}
    For every $x\in \R_+$, it satisfies
    \begin{equation*}
        \|\cT_H F\|_{L^1((0,x))} \leq \left(\int_0^x\sup_{t\in(0,y)} |H(y,t)|\,\d y \right)\|F\|_{L^1((0,x))}.  
    \end{equation*}
    Moreover, $\cT_H$ maps $L^1_\loc(\R_+)$ into $\cC([0,\infty))$, and $\cC^{m}([0,\infty))$ into  $\cC^{m+1}([0,\infty))$ for every $m\in \N_0$. If, in addition, there exist $C_H>0$ and $g\in L^1_{1/2}(\IR_+)$ such that
    \[
    |H(x,t)| \le  C_H e^{-(x-t)}|g(x-t)|,\qquad \text{ for all }(x,t)\in D;
    \]
    then $\cT_H$ maps $L^1_{z_0}(\R_+)$ for all $z_0 \geq 0$ into itself, and
    \begin{equation*}
        \|\cT_H F\|_{L^1_{z_0}(\R_+)} \leq C_H\|g\|_{L^1_{1/2}(\IR_+)} \| F \|_{L^1_{z_0}(\R_+)}, \quad \text{ for all }F\in L^1_{z_0}(\R_+).
    \end{equation*}
\end{lemma}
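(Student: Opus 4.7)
The plan is to dispatch all three assertions by routine Fubini-type arguments together with differentiation under the integral sign; none of them requires more than a direct manipulation of the iterated integral defining $\cT_H$.

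For the first estimate, I would swap the order of integration:
\begin{equation*}
\|\cT_H F\|_{L^1((0,x))} \leq \int_0^x \int_0^y |H(y,t)|\,|F(t)|\,\d t\,\d y =  \int_0^x |F(t)|\int_t^x |H(y,t)|\,\d y\,\d t,
\end{equation*}
and bound the inner integral in $y$ by $\int_0^x \sup_{s\in(0,y)}|H(y,s)|\,\d y$, which is independent of $t$ and factors out of the remaining integral.

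For the regularity statement, continuity of $\cT_H F$ when $F\in L^1_\loc(\R_+)$ follows from the splitting
\begin{equation*}
\cT_H F(x+h)-\cT_H F(x)=\int_x^{x+h}H(x+h,t)F(t)\,\d t+\int_0^x (H(x+h,t)-H(x,t))F(t)\,\d t,
\end{equation*}
where the first term vanishes as $h\to 0$ by absolute continuity of the integral of $|F|$ together with local boundedness of $H$, and the second by dominated convergence, using local uniform continuity of $H$ on compact subsets of $D$. Differentiating under the integral sign then yields the Leibniz-type identity
\begin{equation*}
\partial_x \cT_H F(x) = H(x,x)F(x) + \cT_{\partial_x H}F(x),
\end{equation*}
which already gives $\cT_H: \cC^0(\R_+)\to\cC^1(\R_+)$. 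The $\cC^m\to\cC^{m+1}$ property then follows by induction on $m$: the term $H(x,x)F(x)$ is of class $\cC^m$ whenever $F$ is, and $\cT_{\partial_x H}F$ inherits $\cC^m$ regularity from the inductive hypothesis applied to the Volterra operator with the smooth kernel $\partial_x H\in\cC^\infty(D)$.

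For the weighted bound, I would combine Fubini with the substitution $u := x-t$ to obtain
\begin{equation*}
\|\cT_H F\|_{L^1_{z_0}(\R_+)} \leq C_H\left(\int_0^\infty |F(t)|e^{-2z_0 t}\,\d t\right)\left(\int_0^\infty |g(u)|e^{-(1+2z_0)u}\,\d u\right),
\end{equation*}
and then use the elementary bound $e^{-(1+2z_0)u}\leq e^{-u}$, valid for $z_0\geq 0$, to control the second factor by $\|g\|_{L^1_{1/2}(\R_+)}$. None of the three steps presents a serious obstacle; the only point that deserves a bit of care is that the boundary term $H(x,x)F(x)$ in the Leibniz identity is well-defined and inherits the regularity of $F$, which relies on $H\in\cC^\infty(D)$ encoding smoothness of $H$ up to the diagonal of $D$.
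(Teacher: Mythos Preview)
Your proof is correct in all three parts; the Fubini swap for the $L^1$ bound, the splitting-plus-Leibniz induction for regularity, and the convolution estimate for the weighted norm are exactly the right elementary manipulations. The paper itself does not include a proof of this lemma---it is stated as a collection of standard facts about Volterra operators and used directly---so there is no approach to compare against.
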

Note that $\cG_{\ka}: L^1_{\loc}(\R_+) \To L^1_{\loc}(\R_+)$ 
is bounded by the preceding result, since the kernel $G_\ka$ is smooth on $D$. In fact, $\cG_{\ka}$ enjoys more precise mapping properties. 

First, let $z_0\ge 0$ and define the linear functional
\begin{equation} \label{eq:SF_jost_def}
    \cJ_\ka(F)(z) :=  \int_0^\infty F(x) \psi_z(x;\ka)^2 \, \d x, \qquad F \in L^1_{z_0}(\R_+). 
\end{equation}
This integral is well defined for all $\ka \in \R$ and $z\ge z_0$ by \eqref{eq:jost_asymptotic}. 
\begin{lemma} \label{lemma:Change_basis}
    Let $\ka\in\IR$. 
    \begin{enumerate}[i)]
        \item The kernel $G_{\ka}$ satisfies the estimate
        \begin{equation} \label{eq:est:kernel}
            \sup_{0<t<x}  |G_{\ka}(x,t)| \le   |\ka|C_\ka e^{-x}, \quad\text{ for all }x\in\IR_+,
        \end{equation}
     where $C_\ka = e^{\frac{|\ka|}{4}}(1+|\ka|\frac{e^{\frac{|\ka|}{4}}}{4})$.
        \item For every $z_0\geq 0$, 
        \begin{equation*}
             \cG_\ka : L^1_{z_0}(\R_+)\To L^1_{z_0}(\R_+)
        \end{equation*}
        is a bounded isomorphism.
        \item $\cG_\ka^{-1}-\Id$ is a Volterra operator whose integral kernel $\tilde{G}_\ka$ is smooth on $D$ and satisfies 
        \begin{equation*} 
            \sup_{0<t<x}|\tilde{G}_\kappa(x,t)|\leq |\ka|C_\ka e^{|\ka|C_\ka} e^{-x}, \quad\text{ for all }x\in\IR_+.
        \end{equation*}
        In particular, for every $F\in L^1_{\loc}(\R_+)$ and $x\in \IR_+$,
        \begin{equation}\label{eq:est_Gk}
            |\cG_\ka^{-1}(F)(x)-F(x)|\leq  |\ka|C_\ka e^{|\ka|C_\ka} e^{-x}\int_0^x|F(t)|\,\d t.
        \end{equation}
        \item For every $z_0\geq 0$ and $F\in L^1_{z_0}(\R_+)$,  
        \begin{equation*}
        \cJ_\ka(F)(z) = \int_0^\infty \cG_{\ka} (F)(x) e^{-2zx} \, \d x, \qquad \text{for all }z\ge z_0.
        \end{equation*}
    \end{enumerate}
\end{lemma}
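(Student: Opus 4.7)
The plan is to prove the four parts in the order (i), (iv), (iii), (ii). First I would establish (i) by inserting the Marchenko bound \eqref{eq:marchenko_estimate} into \eqref{eq:G_kernel_explicit_2}: the first summand contributes at most $|\ka|e^{|\ka|/4}e^{-2x}\leq |\ka|e^{|\ka|/4}e^{-x}$, while the integral summand is bounded by $\frac{|\ka|^2 e^{|\ka|/2}}{4}(x-t)e^{-2x-2t}\leq \frac{|\ka|^2 e^{|\ka|/2}}{4}e^{-x}$, where the last inequality uses $(x-t)e^{-2x-2t}\leq xe^{-2x}\leq e^{-x}$ on $D$ (since $x\leq e^x$). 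Adding the two contributions yields exactly $|\ka|C_\ka e^{-x}$.

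Next I would prove (iv). Substituting the representation \eqref{eq:id_jost_sol} into $\psi_z(x;\ka)^2$ and expanding produces three terms. The cross term, after the substitution $y=(x+t)/2$, becomes $4\int_x^\infty K_\ka(x,2y-x)e^{-2zy}\,\d y$. The squared term, after the substitution $y=(t+s)/2$ (with Jacobian $2$) and an interchange of integration order, becomes $2\int_x^\infty e^{-2zy}\bigl(\int_x^{2y-x}K_\ka(x,t)K_\ka(x,2y-t)\,\d t\bigr)\d y$; the bracketed quantity matches exactly $G_\ka(y,x)$ as defined in \eqref{eq:G_kernel_explicit_2}. Hence
\[
\psi_z(x;\ka)^2=e^{-2zx}+\int_x^\infty G_\ka(y,x)\,e^{-2zy}\,\d y,
\]
and plugging this into \eqref{eq:SF_jost_def} and swapping the order of integration (justified by the bound from (i) together with the $F\in L^1_{z_0}$ hypothesis for $z\geq z_0$) gives the identity in (iv).

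For (iii), I would construct $\tilde{G}_\ka$ as a resolvent kernel through a Neumann series. Set $K^{(1)}:=G_\ka$ and $K^{(n+1)}(x,t):=\int_t^x G_\ka(x,s)K^{(n)}(s,t)\,\d s$. The key estimate, proved by induction, is
\[
|K^{(n)}(x,t)|\leq \frac{M^n\,e^{-x}\,(e^{-t}-e^{-x})^{n-1}}{(n-1)!},\qquad M:=|\ka|C_\ka,
\]
where the inductive step uses the substitution $u=e^{-t}-e^{-s}$, $\d u=e^{-s}\,\d s$ to integrate explicitly. Summing the alternating series $\tilde{G}_\ka:=\sum_{n\geq 1}(-1)^{n+1}K^{(n)}$ then gives the pointwise bound $|\tilde{G}_\ka(x,t)|\leq Me^{-x}\,e^{M(e^{-t}-e^{-x})}\leq Me^{M}e^{-x}$. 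Smoothness of $\tilde{G}_\ka$ on $D$ follows from uniform convergence of the series on compacta and the smoothness of $G_\ka$ on $D$, and the ``in particular'' estimate \eqref{eq:est_Gk} drops out of $(\cG_\ka^{-1}-\Id)F(x)=\int_0^x\tilde{G}_\ka(x,t)F(t)\,\d t$.

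Finally, part (ii) falls out. Boundedness of $\cG_\ka$ on $L^1_{z_0}(\R_+)$ follows from (i) by Tonelli:
\[
\int_0^\infty |\cG_\ka F(x)-F(x)|\,e^{-2z_0 x}\,\d x\leq |\ka|C_\ka\int_0^\infty |F(t)|\int_t^\infty e^{-(1+2z_0)x}\,\d x\,\d t\leq |\ka|C_\ka\|F\|_{L^1_{z_0}(\R_+)}.
\]
Invertibility is immediate from (iii): the operator $\Id+\cT_{\tilde{G}_\ka}$ is bounded on $L^1_{z_0}(\R_+)$ by the same Tonelli argument applied to the kernel $\tilde{G}_\ka$, and a direct algebraic check on the Neumann series shows $\cG_\ka\circ(\Id+\cT_{\tilde{G}_\ka})=(\Id+\cT_{\tilde{G}_\ka})\circ\cG_\ka=\Id$ on $L^1_{\loc}(\R_+)$. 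The main obstacle I foresee is isolating the correct telescoping ``moment'' $e^{-t}-e^{-x}$ in the induction of (iii): the naive bound $|G_\ka|\leq Me^{-x}$ gives only the geometric estimate $|K^{(n)}|\lesssim M^n e^{-x}$, which diverges once $M\geq 1$, so the $e^{-t}-e^{-x}$ factor and the factorial gained by the $u$-substitution are essential for uniform-in-$t$ convergence and for producing the sharp constant $Me^M$.
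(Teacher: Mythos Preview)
Your proof is correct and follows essentially the same route as the paper: the kernel bound (i) via the Marchenko estimate, the Neumann-series construction of $\tilde G_\ka$ in (iii) with the factorial gain coming from iterated integration of $e^{-s}$ (the paper packages this as $T^n(1)(x)=\frac{(1-e^{-x})^n}{n!}$, which is your $(e^{-t}-e^{-x})^{n-1}$ estimate specialized to $t=0$), and (iv) by expanding $\psi_z^2$ via \eqref{eq:id_jost_sol} and applying Fubini. The only slip is the sign in your definition of $\tilde G_\ka$: the kernel of $\cG_\ka^{-1}-\Id$ is $\sum_{n\ge1}(-1)^{n}K^{(n)}$, not $(-1)^{n+1}$, so as written your $\Id+\cT_{\tilde G_\ka}$ equals $2\Id-\cG_\ka^{-1}$ rather than $\cG_\ka^{-1}$; this is a trivial fix and does not affect any of the absolute-value bounds.
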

\begin{proof}
    To prove (i) use \eqref{eq:marchenko_estimate} to find
    \[
        4\sup_{0<t<x} |K_\ka(t,2x-t)|  \le 
        |\ka|e^{\frac{|\ka|}{4}}  e^{-2x}  ,
    \]
    and
    \begin{multline*}
         2\sup_{0<t<x} \int_{t}^{2x-t}   |K_\ka(t,s)K_\ka(t,2x-s)| \,\d s  \le  
        \frac{(|\ka|e^{\frac{|\ka|}{4}})^2}{4}  \sup_{0<t<x} (x-t)    e^{-2(t+x)}
        \le  c_\ka |\ka|e^{\frac{|\ka|}{4}}   e^{-x},
    \end{multline*}
    where the last estimate follows from the inequality $x \leq e^{x}$ for $x\geq0$, with $c_\ka:=\frac{|\ka|}{4}e^{\frac{|\ka|}{4}}$. This proves \eqref{eq:est:kernel} with $C_\ka = e^{|\ka|/4}(1+c_\ka)$. 
    
    The fact that $\cG_\ka$ is bounded in $L^1_{z_0}(\R_+)$ follows from (i) and \Cref{lemma:short_continuity}, and the existence of its inverse follows from the convergence of the Neumann series, which implies that:
    \begin{equation} \label{eq:Gk_inverse_def}
         \cG_\ka^{-1}=\sum_{n=0}^\infty (-1)^n (\cG_\ka -\Id)^{n}.
     \end{equation}
    In order to check that the Neumann series converges, note that $(\cG_\ka -\Id)^{n}$ is a Volterra operator whose integral kernel, 
    \begin{equation*}
        G_\ka^n(x,t)=\int_0^x \int_0^{x_1} \dots \int_0^{x_{n-1}} G_\ka(x,x_1)G_\ka(x_1,x_2) \dots G_\ka(x_{n-1},t)\, \d x_{n-1} \dots \d x_2 \d x_1,  
    \end{equation*} 
    satisfies 
    \begin{equation}\label{eq:est_kerneln}
    |G_\ka^n(x,t)|\leq \frac{C_\ka^{n-1} |\ka|^{n-1}}{(n-1)!} |\ka|C_\ka e^{-x}.
    \end{equation}
    To see this, use \eqref{eq:est:kernel} to estimate
    \begin{equation*}
        |G_\ka^n(x,t)|\leq C_\ka^n |\ka|^n e^{-x}
         \int_0^x e^{-x_1}  \dots \int_0^{x_{n-2}} e^{-x_{n-1}} \, \ \d x_{n-1} \dots \d x_1=C_\ka^n|\ka|^n e^{-x} T^n(1),
    \end{equation*} 
    where  $T(f) (x) := \int_0^x e^{-t} f(t) \, \d t$. One can show by induction that
    \begin{equation} \label{eq:T_n_formula}
        T^{n}(1)(x) = \frac{1}{n!}(1-e^{-x})^n \le \frac{1}{n!}.
    \end{equation}
    Therefore, $\cG_\ka^{-1}-\Id$ exists and it is a Volterra operator whose kernel $\tilde{G}_\kappa$ is smooth on $D$ and satisfies:
    \[
    |\tilde{G}_\kappa(x,t)|\leq \sum_{n=1}^{\infty}|G_\ka^n(x,t)|\leq |\ka|C_\ka e^{|\ka|C_\ka} e^{-x}.
    \]
    This completes the proofs of (ii) and (iii). Finally, (iv) follows from \eqref{eq:id_jost_sol}  and \eqref{eq:SF_jost_def} by straightforward computation.
\end{proof}

Recall the definition of the Laplace transform:
\begin{equation*}
    \cL(f)(z) : = \int_0^\infty f(x) e^{-zx} \, \d x.
\end{equation*}
For $\ka\in\IR$, define $ r_\ka(t) : = 2K_\ka (0,2t)$; by \eqref{eq:id_jost_sol} we have that 
\begin{equation*}
   \psi_z(0;\ka) = 1+ \cL (r_\ka)(2z).
\end{equation*}
Given $f,g \in L^1(\R_+)$ we denote
\begin{equation*}
   f \star g (x) = \int_{0}^x f(x-t)g(t) \, \d t.
\end{equation*}
Recall that $\cL g(z) \cL f(z) = \cL (f \star g) (z) $.

Let $\tau_\ka := 2 r_\ka + r_\ka\star r_\ka$ and define the Volterra operators
\begin{equation} \label{eq:Tk_2}
    \cT_\ka g(x) = g(x) + \int_0^x  \tau_\ka(x-t) g(t) \, \d t.
\end{equation}
\begin{lemma}\label{lemma:Change_basis_3}
    Let $\kappa\in\IR$.
    \begin{enumerate}[i)]
        \item For every $g\in L^1_\loc(\IR_+)$ it holds that
        \begin{equation} \label{eq:est_Tk}
        |\cT_\ka g(x) -g(x) | 
        \le C_\ka |\ka| \int_0^x  |g(t)| \, \d t.
        \end{equation}
        \item For every $z_0\geq 0$, $(\cT_\ka-\Id):L^1_{z_0}(\IR_+)\To L^1_{z_0}(\IR_+)$ is bounded with norm bounded by $|\ka|C_\ka$.
        \item The inverse operator $\cT_{\ka}^{-1}$ exists and satisfies, for every $x\in\IR_+$ and $F\in L^1_{\loc}(\R_+)$:
        \begin{equation}\label{eq:normtkmo}
        \norm{\cT_\ka^{-1} F}_{L^1((0,x))} \le   e^{x C_\ka} \norm{F}_{L^1((0,x))}.
        \end{equation}
        \item Let $z_0\geq0$. For every $g\in L^1_{z_0}(\IR_+)$ it holds that
        \begin{equation}\label{eq:Tk_1}
            \cL (\cT_\ka g)(2z) =  \psi_z(0,\ka)^2 \cL g(2z),\qquad\text{ for all }z\geq z_0 .
        \end{equation}
    \end{enumerate}
\end{lemma}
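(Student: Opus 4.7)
The strategy is to derive all four parts from a single pointwise estimate on $\tau_\kappa$, together with the convolution theorem for the Laplace transform. The starting point is a pointwise bound: combining the Marchenko estimate \eqref{eq:marchenko_estimate} with the definition $r_\kappa(t)=2K_\kappa(0,2t)$ gives $|r_\kappa(t)|\le\frac{|\kappa|}{2}e^{|\kappa|/4}e^{-2t}$, and hence $|r_\kappa\star r_\kappa(s)|\le\frac{|\kappa|^2}{4}e^{|\kappa|/2}\,s\,e^{-2s}$. Using $se^{-s}\le 1$ for $s\ge 0$ to absorb the linear factor, the two contributions combine into
\begin{equation*}
|\tau_\kappa(s)|\le|\kappa|\,C_\kappa\,e^{-s},\qquad s\ge 0,
\end{equation*}
with exactly the constant $C_\kappa$ from \Cref{lemma:Change_basis}. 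Assertion (i) follows immediately from inserting this bound into the definition \eqref{eq:Tk_2}.

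For (ii), I would apply \Cref{lemma:short_continuity} to the Volterra kernel $H(x,t)=\tau_\kappa(x-t)$, with the choice $g\equiv 1\in L^1_{1/2}(\IR_+)$ (whose $L^1_{1/2}$ norm equals $1$) and $C_H=|\kappa|C_\kappa$. This immediately reads off the claimed $L^1_{z_0}$ bound for $\cT_\kappa-\Id$, and hence for $\cT_\kappa$ up to adding the identity.

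For (iii), the natural route is the Neumann series $\cT_\kappa^{-1}=\sum_{n\ge 0}(-\cK_\kappa)^n$, where $\cK_\kappa:=\cT_\kappa-\Id$. Iterating the estimate from (i), a quick induction shows that $\cK_\kappa^n$ is a Volterra operator whose kernel is pointwise bounded by $(|\kappa|C_\kappa)^n(x-t)^{n-1}/(n-1)!$; integrating in the second variable and summing produces the factorial decay needed for convergence and yields the exponential bound \eqref{eq:normtkmo}.

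Finally, (iv) is essentially formal once we use the convolution theorem for $\cL$. Since $\cT_\kappa g=g+\tau_\kappa\star g$ and $\tau_\kappa=2r_\kappa+r_\kappa\star r_\kappa$, one obtains $1+\cL(\tau_\kappa)(w)=(1+\cL(r_\kappa)(w))^2$. The change of variable $s=2t$ in $r_\kappa(t)=2K_\kappa(0,2t)$ gives $\cL(r_\kappa)(2z)=\int_0^\infty K_\kappa(0,s)e^{-zs}\,ds$, which by \eqref{eq:id_jost_sol} evaluated at $x=0$ equals $\psi_z(0;\kappa)-1$. Setting $w=2z$ then delivers \eqref{eq:Tk_1}. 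The main technical point is the first step: one has to track constants carefully enough so that the pointwise bound on $\tau_\kappa$ produces precisely the constant $C_\kappa$ appearing in the parallel statement \Cref{lemma:Change_basis} for $\cG_\kappa$, since the subsequent arguments in (ii) and (iii) propagate this constant verbatim.
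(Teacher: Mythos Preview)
Your proof is correct and follows essentially the same approach as the paper's: deriving a pointwise bound on $\tau_\kappa$ from the Marchenko estimate, then applying \Cref{lemma:short_continuity} for (ii), the Neumann series with iterated Volterra bounds for (iii), and the Laplace convolution identity for (iv). The only cosmetic difference is that the paper records the estimate as $|\tau_\kappa(s)|\le|\kappa|C_\kappa(s+1)e^{-2s}$ rather than your $|\kappa|C_\kappa e^{-s}$ (absorbing the factor $s$ one step later via $(s+1)e^{-2s}\le 1$), but this is the same argument.
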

\begin{proof}
    The function
    $r_\ka (x)$ is smooth on $\R_+$ and by \eqref{eq:SF_jost_def},  
    \begin{equation} \label{eq:r_estimate}
        |r_\ka (x)| \le |\ka| \frac{e^{\frac{|\ka|}{4}}}{2}  e^{-2x},
    \end{equation}
    and
    \[
    |r_\ka\star r_\ka (x)|  \le \int_0^{x} |r_\ka(x -s)||r_\ka(s)| \, \d s \\
    \le  \ka^2 \frac{e^{\frac{|\ka|}{2}}}{4} \int_0^{x} e^{-2x} \, \d s = \ka^2 \frac{e^{\frac{|\ka|}{2}}}{4} xe^{-2x}.
    \]
    These estimates imply that
    \begin{equation} \label{eq:tau_est}
        \tau_\ka(x) \le |\ka| C_\ka (x+1) e^{-2x}.
    \end{equation}
    Estimate \eqref{eq:est_Tk} is a direct consequence of this. Property (ii) follows from \eqref{eq:tau_est} and \Cref{lemma:short_continuity}. To prove (iii) denote $\cM_\ka g= \tau_k \star g$, we show that the Neumann series
    \begin{equation} \label{eq:neumann}
       \sum_{n=0}^\infty (-1)^n\cM_\ka^{n}
    \end{equation}
    converges in operator norm. For $n\ge 1$, the integral kernel of $\cM_\ka^{n}$ is given by
    \begin{equation*}
        m_\ka^n(x,t):= \int_0^{x_1} \tau_\ka(x-x_1)  \dots \int_0^{x_{n-1}}  \tau_\ka(x_{n-1}-t) \, \d x_{n-1} \dots    \d x_1.
    \end{equation*}
    Since by \eqref{eq:est_Tk} $|\tau_\ka(x)| \le C_\ka |\ka|$ for all $x\in \R_+$, one has the estimate:
    \[
    |m_\ka^n(x,t)|\le (|\ka| C_\ka)^n \frac{x^{n}}{n!},\qquad \text{for all } (t,x)\in D.
    \]
    This shows that the series \eqref{eq:neumann} converges in operator norm in $L^1((0,x))$ for every $x\in\IR_+$, as claimed, and that \eqref{eq:normtkmo} holds. Finally, property (iv) is a straightforward computation:
    \begin{equation*} 
        \psi_z(0,\ka)^2 \cL g(2z)  = \cL g(2z) + 2 \cL (g \star r_\ka)(2z) +  \cL (g \star r_\ka \star r_\ka)(2z) \\ 
     = \cL (\cT_\ka g)(2z).\qedhere
    \end{equation*} 
\end{proof}

\section{Spherical harmonics and projectors}\label{app:sh}
In this appendix, we present some explicit computations involving spherical harmonics. Recall that a $d$-dimensional spherical harmonic of degree $\ell\in\IN_0$ is the restriction to the sphere $\IS^{d-1}\subset \R^d$ of a complex homogeneous polynomial $P$ in $d$ variables of degree $\ell$ that is harmonic, \textit{i.e.} 
\[
\Delta P(x)=0,\qquad x\in\IR^d.
\]
These functions form a vector space that we denote by $\H_\ell^d$, of dimension
\begin{equation*}
    N_{\ell,d}:=\dim \H_\ell^d=\binom{\ell+d-1}{\ell} - \binom{\ell+d-3}{\ell-2}=\frac{(\ell+d-2)! + \ell(\ell+d-3)!}{\ell! (d-2)!}.   
\end{equation*}
Spherical harmonics of different degrees are orthogonal in $L^2(\IS^{d-1})$. In fact,
\[
L^2(\IS^{d-1})=\bigoplus_{\ell\in\IN_0}\H_\ell^d,
\]
and, moreover, spherical harmonics diagonalize the Laplacian on the sphere:
\[
-\Delta_{\IS^{d-1}}|_{\H_\ell^d}=\ell(\ell+d-2)\Id_{\H_\ell^d},\qquad \forall\ell\in\IN_0.
\]
Important examples of spherical harmonics are the restrictions to $x\in\IS^{d-1}$ of the functions:
\[
(x\cdot\zeta)^\ell, \qquad \text{provided }\;\zeta\in\IC^d,\;\zeta\cdot\zeta=0.
\]
This follows from:
\begin{equation*}
    \Delta(x\cdot\zeta)^\ell=\ell(\ell-1)(x\cdot\zeta)^{\ell-2}(\zeta\cdot\zeta)=0,\qquad x\in\IR^d.
\end{equation*}
The orthogonal projector  $\mathcal{P}_{\ell,d}:L^2(\Sp^{d-1})\longrightarrow \H_\ell^d$  is given by
\begin{equation*}
\mathcal{P}_{\ell,d}f(x)= \frac{N_{\ell,d}}{\abs{\Sp^{d-1}}}\int_{\Sp^{d-1}} P_{\ell,d}(x\cdot y)f(y)\d{y},\qquad f\in L^2(\Sp^{d-1}),
\end{equation*}
where $P_{\ell,d}$ are the generalized Legendre polynomials
\begin{equation*}
    P_{\ell,d}(t)=(-1)^\ell R_{\ell}(1-t^2)^{-(\nu_d-\frac{1}{2})}\left(\dfrac{\d{}}{\d{t}}\right)^\ell(1-t^2)^{\ell+\nu_d-\frac{1}{2}},
\end{equation*}
and
\[
\nu_d:=\frac{d-2}{2},\quad R_{\ell}:=\frac{\Gamma\left(\frac{d-1}{2}\right)}{2^\ell\Gamma\left(\ell+\frac{d-1}{2}\right)}, \quad|\Sp^{d-1}|=\frac{2\pi^{d/2}}{\Gamma(d/2)}.
\]
In particular, the function 
\begin{equation}\label{e:zonal}
    Z_\ell(x\cdot y):=\frac{N_{\ell,d}}{|\IS^{d-1}|} P_{\ell,d}(x\cdot y)
\end{equation}
is a reproducing kernel for $\H_\ell^d$. 
The normalization in the definition of $P_{\ell,d}$ ensures that $P_{\ell,d}(1)=1$, and (\cite[Equation 2.67]{Atkinson2012})
\begin{equation}\label{e:normp}
    \frac{|\IS^{d-2}|}{|\IS^{d-1}|}\int_{-1}^{1}P_{m,d}(t)P_{n,d}(t)(1-t^2)^{\nu_d-\frac{1}{2}}\d t=\frac{1}{N_{m,d}}\delta_{m,n}.  
\end{equation}

It also follows from the definition that
\begin{equation}\label{e:parp}
    P_{\ell,d}(-t)=(-1)^\ell P_{\ell,d}(t).
\end{equation}
A spherical harmonic $Y_\ell\in\H_\ell^d$ is invariant by all rotations that fix $\xi\in\IS^{d-1}$ if and only if it is a multiple of $P_{\ell,d}(x\cdot\xi)$.

Recall that, given any $\zeta\in\IC^d$ such that $\zeta\cdot\zeta\neq 0$, we write:
\[
\widehat{\zeta}:=\frac{1}{\sqrt{\zeta\cdot\zeta}}\zeta.
\]
\begin{lemma}\label{prop:Analytic Extension} 
The following statements hold.
\begin{enumerate}[i)]
    \item Let $f:\C\to\C$ be holomorphic and let $x,z\in\C^{d}$ satisfy $x\cdot x\neq 0\neq z\cdot z$, then
    \begin{align}
    \int_{\Sp^{d-1}} P_{\ell,d}\left(\widehat{x}\cdot y\right)f\left(y\cdot \widehat{z}\right)\d{y}&=\abs{\Sp^{d-2}} P_{\ell,d}\left(\widehat{x}\cdot \widehat{z}\right)\int_{-1}^1 P_{\ell,d}(t) f(t)(1-t^2)^{\nu_d-\frac{1}{2}}\d{t}\nonumber\\ 
    &=\abs{\Sp^{d-2}}R_{\ell}  P_{\ell,d}\left(\widehat{x}\cdot \widehat{z}\right)\int_{-1}^1 \p_t^\ell f(t)(1-t^2)^{\ell+\nu_d-\frac{1}{2}}\d{t}.\label{eq:Analytic Extension 1}
    \end{align}
    \item Let $f_\ell\in\H^d_\ell$ and $z\in\C^d$, then
    \begin{equation}\label{eq:Analytic Extension 2}
        \int_{\Sp^{d-1}} f_\ell(y)\left(y\cdot z\right)^\ell\d{y}=\frac{2^{1-\ell}\pi^{d/2}\ell!}{\Gamma(\ell+\frac{d}{2})} f^{\IC}_\ell(z),
    \end{equation}
    where $f^{\IC}_\ell$ stands for the analytic extension of $f_\ell$ to $\IC^d$.
\end{enumerate}
\end{lemma}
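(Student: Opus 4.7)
For part (i), my plan is to first establish the identity on the real sphere by the Funk--Hecke theorem and then extend by analytic continuation. For each fixed $\hat{x}\in\Sp^{d-1}$, the function $y\mapsto P_{\ell,d}(\hat{x}\cdot y)$ is a zonal spherical harmonic of degree $\ell$, so Funk--Hecke with this as the test harmonic and axis $\hat{z}\in\Sp^{d-1}$ produces the first equality of \eqref{eq:Analytic Extension 1} for real $\hat{x},\hat{z}$. To pass to complex $\hat{x},\hat{z}$, I would observe that both sides are holomorphic in $(\hat{x},\hat{z})$ on the complex variety $V\times V$, with $V=\{w\in\C^d:w\cdot w=1\}$ (holomorphy on the left comes from differentiating under the integral, using compactness of $\Sp^{d-1}$ and holomorphy of $f$). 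Since $\Sp^{d-1}\subset V$ is a maximally totally real submanifold---its real tangent satisfies $T_p\Sp^{d-1}\cap iT_p\Sp^{d-1}=0$ inside $T_pV$ and has real dimension $d-1=\dim_\C V$---it is a uniqueness set for holomorphic functions on $V$, and the identity propagates to all of $V\times V$. Normalizing via $\hat{x}=x/\sqrt{x\cdot x}$ and $\hat{z}=z/\sqrt{z\cdot z}$ then gives the stated identity for $x,z\in\C^d$ with $x\cdot x\neq 0\neq z\cdot z$.

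For the second equality of \eqref{eq:Analytic Extension 1}, I would substitute the Rodrigues-type representation
\[P_{\ell,d}(t)=(-1)^\ell R_\ell(1-t^2)^{-(d-3)/2}\partial_t^\ell(1-t^2)^{\ell+(d-3)/2}\]
into the inner integral and integrate by parts $\ell$ times in $t$. All boundary terms vanish because $(1-t^2)^{\ell+(d-3)/2}$ and its first $\ell-1$ derivatives vanish at $t=\pm1$ (using $\ell+(d-3)/2>\ell-1$ for $d\ge 2$), and the two factors of $(-1)^\ell$ (from Rodrigues and from the repeated integration by parts) cancel. The holomorphy of $f$ makes the repeated differentiation harmless.

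For part (ii), the strategy is parallel. For real $\hat{z}\in\Sp^{d-1}$ and any $f_\ell\in\H^d_\ell$, Funk--Hecke with test function $F(t)=t^\ell$ gives
\[\int_{\Sp^{d-1}}f_\ell(y)(\hat{z}\cdot y)^\ell\,dy=\lambda_\ell\,f_\ell(\hat{z}),\qquad \lambda_\ell=|\Sp^{d-2}|\int_{-1}^{1}t^\ell P_{\ell,d}(t)(1-t^2)^{(d-3)/2}\,dt.\]
Evaluating $\lambda_\ell$ through the second form of \eqref{eq:Analytic Extension 1} (with $f(t)=t^\ell$, so $\partial_t^\ell f=\ell!$) reduces it to the Beta integral $\int_{-1}^{1}(1-t^2)^{\ell+(d-3)/2}\,dt=\sqrt{\pi}\,\Gamma(\ell+(d-1)/2)/\Gamma(\ell+d/2)$; combining this with the Gamma factors in $R_\ell$ and $|\Sp^{d-2}|$ produces precisely $2^{1-\ell}\pi^{d/2}\ell!/\Gamma(\ell+d/2)$. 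The identity then extends from real $\hat{z}\in\Sp^{d-1}$ to real $z=r\hat{z}\in\R^d$ by homogeneity (both sides are degree-$\ell$ homogeneous in $z$, and $f^\C_\ell$ is by definition the homogeneous extension of $f_\ell$), and finally to complex $z\in\C^d$ by the identity principle, since both sides are polynomials of degree $\ell$ in the coordinates of $z$ agreeing on the totally real slice $\R^d\subset\C^d$.

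The most delicate part of the plan is the analytic-continuation step in (i): verifying that $V=\{w\cdot w=1\}$ is a connected complex submanifold of $\C^d$ for $d\ge 2$ and that $\Sp^{d-1}\subset V$ is totally real of maximal real dimension $d-1=\dim_\C V$, so that the identity principle for holomorphic functions on $V$ applies to the difference LHS $-$ RHS. Once this is in place, everything else reduces to standard Funk--Hecke together with Rodrigues-formula manipulations and Beta-integral bookkeeping.
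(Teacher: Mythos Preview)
Your proposal is correct and follows essentially the same route as the paper: Funk--Hecke for real arguments, analytic continuation to complex arguments, and the Rodrigues formula plus integration by parts for the second equality in (i); for (ii), Funk--Hecke on the sphere, extension by homogeneity to $\R^d$, then by analyticity to $\C^d$.

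The one place where the paper proceeds more simply is the analytic continuation in (i). Rather than passing to the complex quadric $V=\{w\cdot w=1\}$ and invoking the totally-real uniqueness argument, the paper keeps $x,z$ as ambient $\C^d$ variables: both sides of the first equality are holomorphic in $(x,z)$ on the connected open set $\{x\cdot x\notin(-\infty,0]\}\times\{z\cdot z\notin(-\infty,0]\}\subset\C^{2d}$ (since $\sqrt{x\cdot x}$ is holomorphic there), and this set contains $(\R^d\setminus\{0\})^2$, so the ordinary identity principle in $\C^{2d}$ applies; continuity then covers the remaining points where $x\cdot x$ or $z\cdot z$ is negative real. Your quadric argument is perfectly valid but carries more geometric overhead than needed here. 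In (ii) you compute the Funk--Hecke constant explicitly via the Beta integral, while the paper simply cites it; your computation is correct.
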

\begin{proof}
\begin{enumerate}[i)]
\item We start by noting that the second equality follows directly from the definition of $P_{\ell,d}$ and integration by parts $l$ times. For $x,z\in\R^d\setminus\{0\}$ the first equality is known as the Funk–Hecke Formula \cite[Theorem 2.22]{Atkinson2012}, since both sides are complex analytic on $x,z$ whenever $x\cdot x\notin(-\infty,0]$ and $z\cdot z\notin(-\infty,0]$, then they are equal on the same set. Continuity extends the equality to $x\cdot x\neq 0\neq z\cdot z$.
\item For $z\in\Sp^{d-1}$ this is a special case of the Funk–Hecke Formula \cite[Equation 2.65]{Atkinson2012}. Homogeneity of degree $\ell$ of both sides extends the equality to $z\in\R^d$, and analyticity to $z\in\C^d$.\qedhere
\end{enumerate}
\end{proof}
The next result gives an explicit expression of the orthogonal projection onto spherical harmonics of the function
\begin{equation*}
    e_\zeta(x):=e^{\zeta\cdot x},\qquad \zeta\in\IC^d.
\end{equation*}
\begin{lemma}\label{prop:Proyection Radial}
Let $\zeta \in \C^d$ and $\ell\in\N_0$, then
\begin{equation*}
\mathcal{P}_{\ell,d} e_\zeta(x)=
\begin{cases}
\displaystyle\Gamma(d/2)N_{\ell,d}\left(\frac{2}{\sqrt{\zeta\cdot\zeta}}\right)^{\nu_d}I_{\ell+\nu_d}\left(\sqrt{\zeta\cdot\zeta}\right) P_{\ell,d}\left(x\cdot\widehat{\zeta}\right),&\zeta\cdot\zeta\neq 0,\\
\\
\displaystyle\frac{(x\cdot\zeta)^\ell}{\ell!},& \zeta\cdot\zeta= 0.
\end{cases}
\end{equation*}
\end{lemma}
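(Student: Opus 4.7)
\textbf{Plan for the proof of Lemma~\ref{prop:Proyection Radial}.} Fix $x\in\IS^{d-1}$ (so $\widehat{x}=x$) and start from the integral representation of the projector:
\begin{equation*}
\mathcal{P}_{\ell,d}e_\zeta(x)=\frac{N_{\ell,d}}{|\IS^{d-1}|}\int_{\IS^{d-1}}P_{\ell,d}(x\cdot y)\,e^{\zeta\cdot y}\,\d{y}.
\end{equation*}
For the case $\zeta\cdot\zeta\ne 0$, set $w:=\sqrt{\zeta\cdot\zeta}$ (any branch) and take $f(t):=e^{wt}$, which is entire. Then $e^{\zeta\cdot y}=f(y\cdot\widehat{\zeta})$, and the second identity in Lemma~\ref{prop:Analytic Extension}(i) (after noticing that $\partial_t^\ell f(t)=w^\ell e^{wt}$ and $(d-3)/2=\nu_d-1/2$) gives
\begin{equation*}
\int_{\IS^{d-1}}P_{\ell,d}(x\cdot y)e^{\zeta\cdot y}\d{y}=|\IS^{d-2}|R_\ell\,w^\ell P_{\ell,d}(x\cdot\widehat{\zeta})\int_{-1}^1 e^{wt}(1-t^2)^{\ell+\nu_d-1/2}\d t.
\end{equation*}

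The next step is to recognise the remaining integral as a Poisson-type representation of the modified Bessel function: with $\nu=\ell+\nu_d$,
\begin{equation*}
I_\nu(w)=\frac{(w/2)^\nu}{\sqrt{\pi}\,\Gamma(\nu+1/2)}\int_{-1}^1 e^{wt}(1-t^2)^{\nu-1/2}\d t.
\end{equation*}
Substituting this and the definition $R_\ell=\Gamma(\nu_d+1/2)/(2^\ell\Gamma(\ell+\nu_d+1/2))$, the factor $\Gamma(\ell+\nu_d+1/2)$ cancels and the surviving constant simplifies using $|\IS^{d-2}|\sqrt{\pi}\,\Gamma(\nu_d+1/2)=2\pi^{d/2}$ together with $2\pi^{d/2}/|\IS^{d-1}|=\Gamma(d/2)$. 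Carrying the powers of $2$ and $w$, one is left with exactly $\Gamma(d/2)N_{\ell,d}(2/w)^{\nu_d}I_{\ell+\nu_d}(w)P_{\ell,d}(x\cdot\widehat{\zeta})$, which is the claimed formula. The main obstacle here is purely bookkeeping: keeping track of the powers of $2$, the Gamma-function cancellations, and verifying that the apparent branch of $w=\sqrt{\zeta\cdot\zeta}$ is illusory, because $P_{\ell,d}$ has the parity of $\ell$ (by~\eqref{e:parp}), so $w^\ell P_{\ell,d}(\cdot/w)$ is polynomial in $\zeta$, and $I_\nu(w)/w^\nu$ is entire in $w^2$.

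For the degenerate case $\zeta\cdot\zeta=0$, the cleanest route is to bypass the above formula and use the Taylor expansion
\begin{equation*}
e^{\zeta\cdot x}=\sum_{k\ge 0}\frac{(\zeta\cdot x)^k}{k!}.
\end{equation*}
As recalled in Appendix~\ref{app:sh}, the identity $\Delta(\zeta\cdot x)^k=k(k-1)(\zeta\cdot\zeta)(\zeta\cdot x)^{k-2}=0$ shows that each monomial $(\zeta\cdot x)^k$ is a homogeneous harmonic polynomial of degree $k$, so its restriction to $\IS^{d-1}$ lies in $\H_k^d$. Therefore $\mathcal{P}_{\ell,d}$ picks out precisely the term $k=\ell$, yielding $(\zeta\cdot x)^\ell/\ell!$. (As a consistency check, one may also obtain this by letting $\zeta\cdot\zeta\to 0$ in the first formula: the leading term of $I_{\ell+\nu_d}(w)$ at $w=0$ combined with the leading coefficient of $P_{\ell,d}$ produces the same expression, confirming the continuity of both sides in $\zeta$.)
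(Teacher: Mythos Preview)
Your proof is correct and follows essentially the same approach as the paper: in the case $\zeta\cdot\zeta\neq 0$ you apply the Funk--Hecke identity from Lemma~\ref{prop:Analytic Extension}(i) with $f(t)=e^{wt}$ and then identify the resulting integral via the Poisson representation of $I_{\ell+\nu_d}$, and in the case $\zeta\cdot\zeta=0$ you use the Taylor expansion together with the fact that $(x\cdot\zeta)^k\in\H_k^d$. Your additional remarks on branch independence and the consistency check as $\zeta\cdot\zeta\to 0$ are not in the paper but are helpful clarifications.
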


\begin{proof}
Assume $\zeta\cdot\zeta\neq0$ and set $\displaystyle g(t)=e^{\sqrt{\zeta\cdot\zeta}\,t}$, then by \eqref{eq:Analytic Extension 1} we have
\begin{align*}
\mathcal{P}_{\ell,d}e_\zeta(x)&=\frac{N_{\ell,d}}{\abs{\Sp^{d-1}}}\int_{\Sp^{d-1}} P_{\ell,d}(x\cdot y)e_\zeta(y)\d{y}=\frac{N_{\ell,d}}{\abs{\Sp^{d-1}}}\int_{\Sp^{d-1}}P_{\ell,d}(x\cdot y)g\left(y\cdot \widehat{\zeta}\right)\d{y}\\
&=\frac{\abs{\Sp^{d-2}}N_{\ell,d}R_{\ell}}{\abs{\Sp^{d-1}}} P_{\ell,d}\left(x\cdot \widehat{\zeta}\right)\int_{-1}^1 g^{(\ell)}(t)(1-t^2)^{\ell+\nu_d-\frac{1}{2}}\d{t}\\
&= \frac{\abs{\Sp^{d-2}}N_{\ell,d}R_{\ell}}{\abs{\Sp^{d-1}}} P_{\ell,d}\left(x\cdot \widehat{\zeta}\right)\left(\sqrt{\zeta\cdot\zeta}\right)^\ell\int_{-1}^1 e^{\sqrt{\zeta\cdot\zeta}\,t}(1-t^2)^{\ell+\nu_d-\frac{1}{2}}\d{t}.
\end{align*}
Using now the integral representation for the modified Bessel functions:
\[
I_\nu(z)=\frac{1}{\sqrt{\pi}\Gamma\left(\nu+\frac{1}{2}\right)}\left(\frac{z}{2}\right)^\nu\int_{-1}^1 e^{zt}(1-t^2)^{\nu-\frac{1}{2}}\d t,
\]
and
\[
\frac{|\IS^{d-2}|}{|\IS^{d-1}|}=\frac{\Gamma(d/2)}{\sqrt{\pi}\Gamma((d-1)/2)},
\]
we find that the above computation simplifies to
\begin{align*}
\mathcal{P}_{\ell,d}e_\zeta(x) = \Gamma(d/2)N_{\ell,d}\left(\frac{2}{\sqrt{\zeta\cdot\zeta}}\right)^{\nu_d}I_{\ell+\nu_d}\left(\sqrt{\zeta\cdot\zeta}\right) P_{\ell,d}\left(x\cdot \widehat{\zeta}\right).
\end{align*}

Now assume $\zeta\cdot\zeta= 0$. In this case, for every $n\in\IN_0$, the function $\IS^{d-1}\ni x\mapsto(x\cdot\zeta)^n\in\IC$ is an element of $\H_n^d$. The result now follows from the fact that
\[
e_\zeta(x)=\sum_{n=0}^\infty \frac{(x\cdot \zeta)^n}{n!}.\qedhere
\]
\end{proof}
These tools allow us to prove the following result.
\begin{lemma}\label{prop:mainradial}
Let $\zeta_1,\zeta_2\in\C^d$ such that $\zeta_1\cdot\zeta_1 =\zeta_2\cdot\zeta_2=-\kappa\in\IC$ 
and $r\in\R_+$. Then:
\begin{equation*}
    \hp{\cc{\mathcal{P}_{\ell,d}e_{r\zeta_1}}}{\mathcal{P}_{\ell,d}e_{r\zeta_2}}_{L^2(\Sp^{d-1})} = 
    \begin{cases}
       \displaystyle
        (2\pi)^d   \frac{J_{\ell+\nu_d}(\sqrt{\kappa} r)^2}{(\sqrt{\ka} r)^{2\nu_d}}Z_{\ell,d}\left(\frac{\zeta_1\cdot\zeta_2}{\kappa}\right)
        ,&\kappa\neq 0\\
        \displaystyle 2\pi^{d/2} \frac{r^{2\ell}}{\Gamma(\ell+\frac{d}{2})\ell!}\left(\frac{\zeta_1\cdot \zeta_2}{2}\right)^\ell,&\kappa=0.
   \end{cases}   
\end{equation*}   
\end{lemma}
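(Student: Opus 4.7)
The plan is to reduce the inner product to a concrete integral over $\Sp^{d-1}$ using the explicit formulas for $\cP_{\ell,d}e_\zeta$ from \Cref{prop:Proyection Radial}, and then to evaluate that integral via the Funk--Hecke machinery encapsulated in \Cref{prop:Analytic Extension}. As a preliminary reduction, I would use that the reproducing kernel $P_{\ell,d}(x\cdot y)$ is real and symmetric, so that $\int_{\Sp^{d-1}} \cP_{\ell,d} f \cdot g = \int_{\Sp^{d-1}} f\cdot \cP_{\ell,d} g$ (no conjugates); together with $\cP_{\ell,d}^2=\cP_{\ell,d}$ and $\hp{\cc{f}}{g}=\int fg$, this gives
\[
\hp{\cc{\cP_{\ell,d}e_{r\zeta_1}}}{\cP_{\ell,d}e_{r\zeta_2}}_{L^2(\Sp^{d-1})}
=\int_{\Sp^{d-1}}\cP_{\ell,d}e_{r\zeta_1}(y)\,\cP_{\ell,d}e_{r\zeta_2}(y)\,\d y,
\]
after which the cases $\kappa\neq 0$ and $\kappa=0$ can be handled independently.

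For $\kappa\neq 0$, \Cref{prop:Proyection Radial} expresses each projection as an explicit scalar times $P_{\ell,d}(y\cdot\zeta_j/\sqrt{-\kappa})$. I would then apply \Cref{prop:Analytic Extension}(i) with $f=P_{\ell,d}$, which combined with the orthogonality relation \eqref{e:normp} yields the addition-type identity
\[
\int_{\Sp^{d-1}} P_{\ell,d}(a\cdot y)\,P_{\ell,d}(b\cdot y)\,\d y=\frac{|\Sp^{d-1}|}{N_{\ell,d}}P_{\ell,d}(a\cdot b),
\]
valid for $a,b\in\IC^d$ with $a\cdot a=b\cdot b=1$; applying it with $a=\zeta_1/\sqrt{-\kappa}$ and $b=\zeta_2/\sqrt{-\kappa}$ leaves $P_{\ell,d}(-\zeta_1\cdot\zeta_2/\kappa)$. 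The Bessel prefactor $(r\sqrt{-\kappa})^{-2\nu_d}I_{\ell+\nu_d}(r\sqrt{-\kappa})^2$ can then be converted into $(-1)^\ell\varphi_\ell(r\sqrt{\kappa})^2$ via $I_\nu(iz)=i^\nu J_\nu(z)$ (under the branch $\sqrt{-\kappa}=i\sqrt{\kappa}$), and this $(-1)^\ell$ exactly cancels the one produced by the parity $P_{\ell,d}(-t)=(-1)^\ell P_{\ell,d}(t)$. The remaining Gamma and power-of-$2$ constants collapse via $|\Sp^{d-1}|\Gamma(d/2)=2\pi^{d/2}$ into the prefactor $(2\pi)^d N_{\ell,d}/|\Sp^{d-1}|$, recovering the target $(2\pi)^d\,\varphi_\ell(r\sqrt{\kappa})^2\,Z_{\ell,d}(\zeta_1\cdot\zeta_2/\kappa)$.

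The case $\kappa=0$ falls into the second branch of \Cref{prop:Proyection Radial}, where $\cP_{\ell,d}e_{r\zeta_j}(y)=r^\ell(y\cdot\zeta_j)^\ell/\ell!$ is itself a spherical harmonic because $\zeta_j\cdot\zeta_j=0$. I would then invoke \Cref{prop:Analytic Extension}(ii) with $f_\ell(y)=(y\cdot\zeta_1)^\ell$ and $z=\zeta_2$ to evaluate
\[
\int_{\Sp^{d-1}}(y\cdot\zeta_1)^\ell(y\cdot\zeta_2)^\ell\,\d y=\frac{2^{1-\ell}\pi^{d/2}\ell!}{\Gamma(\ell+d/2)}(\zeta_1\cdot\zeta_2)^\ell,
\]
and dividing by $(\ell!)^2$ gives the stated formula. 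The main obstacle is the sign and branch bookkeeping in the $\kappa\neq 0$ case, since $\nu_d$ is a half-integer in odd dimension and the conversion $I\to J$ introduces ambiguous factors of $i^{2\nu_d}$; my preferred remedy is to verify the identity first for real $\kappa>0$ under a fixed branch, where every factor is unambiguous, and then extend to all admissible $(\zeta_1,\zeta_2,\kappa)$ with $\zeta_j\cdot\zeta_j=-\kappa$ by analytic continuation, since both sides are entire in $\kappa$ through $\varphi_\ell^2$ and depend polynomially on $\zeta_1\cdot\zeta_2/\kappa$.
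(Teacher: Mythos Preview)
Your proposal is correct and follows essentially the same approach as the paper: both use \Cref{prop:Proyection Radial} to express the projections explicitly, then evaluate the resulting spherical integral via \Cref{prop:Analytic Extension} (part (i) with \eqref{e:normp} and \eqref{e:parp} for $\kappa\neq 0$, part (ii) for $\kappa=0$), with the same $(-1)^\ell$ cancellation between the $I\to J$ conversion and the parity of $P_{\ell,d}$. The only cosmetic difference is that the paper handles the branch ambiguity directly via $\sqrt{-\kappa}=-\mathrm{sign}(\Im\kappa)\,i\sqrt{\kappa}$ rather than your suggested verify-then-analytically-continue route.
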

\begin{proof}
Assume first that $\kappa\neq 0$. By \Cref{prop:Proyection Radial} it follows that
\begin{align*}
    \hp{\cc{\mathcal{P}_{\ell,d}e_{r\zeta_1}}}{\mathcal{P}_{\ell,d}e_{r\zeta_2}}&_{L^2(\Sp^{d-1})}\\
    =\Gamma(d/2)^2&N_{\ell,d}^2\left(\frac{2}{r\sqrt{-\kappa}}\right)^{2\nu_d}I_{\ell+\nu_d}\left(r\sqrt{-\kappa}\right)^2 \int_{\IS^{d-1}}P_{\ell,d}\left(x\cdot\widehat{\zeta_1}\right)P_{\ell,d}\left(x\cdot\widehat{\zeta_2}\right)\d x\\
    =\Gamma(d/2)^2&N_{\ell,d}^2\left(\frac{2}{r\sqrt{\kappa}}\right)^{2\nu_d}(-1)^\ell J_{\ell+\nu_d}\left(r\sqrt{\kappa}\right)^2 \int_{\IS^{d-1}}P_{\ell,d}\left(x\cdot\widehat{\zeta_1}\right)P_{\ell,d}\left(x\cdot\widehat{\zeta_2}\right)\d x,
\end{align*}
where we have used the identity $I_\nu(i\cdot)=i^\nu J_\nu$, and the fact that $\sqrt{-\kappa}=-\mathrm{sign}(\Im\kappa)i\sqrt{\kappa}$. Taking into account \eqref{eq:Analytic Extension 1}, \eqref{e:normp}, and \eqref{e:parp} we find that
\begin{align*}
   \int_{\Sp^{d-1}}P_{\ell,d}\left(y\cdot\widehat{\zeta_1}\right)P_{\ell,d}\left(y\cdot\widehat{\zeta_2}\right)\d{y}&=\abs{\Sp^{d-2}}P_{\ell,d}\left(\widehat{\zeta_1}\cdot\widehat{\zeta_2}\right)\int_{-1}^1 P_{\ell}(t)^2(1-t^2)^{\nu_d-\frac{1}{2}}\d{t}\\
   &=\frac{(-1)^\ell\abs{\Sp^{d-1}}}{N_{\ell,d}}P_{\ell,d}\left(\frac{\zeta_1\cdot\zeta_2}{\kappa}\right).
\end{align*}
Hence, we obtain
\begin{align*}
     \hp{\cc{\mathcal{P}_{\ell,d}e_{r\zeta_1}}}{\mathcal{P}_{\ell,d}e_{r\zeta_2}}_{L^2(\Sp^{d-1})}
    =&|\IS^{d-1}|\Gamma(d/2)^2 N_{\ell,d} \left(\frac{2}{r\sqrt{\kappa}}\right)^{2\nu_d}J_{\ell+\nu_d}\left(r\sqrt{\kappa}\right)^2 P_{\ell,d}\left(\frac{\zeta_1\cdot\zeta_2}{\kappa}\right)\\
    =&|\IS^{d-1}|^2\Gamma(d/2)^2 4^{\nu_d} \frac{J_{\ell+\nu_d}(\sqrt{\kappa} r)^2}{(\sqrt{\ka} r)^{2\nu_d}} Z_{\ell,d}\left(\frac{\zeta_1\cdot\zeta_2}{\kappa}\right)\\
    =&(2\pi)^d\frac{J_{\ell+\nu_d}(\sqrt{\kappa} r)^2}{(\sqrt{\ka} r)^{2\nu_d}} Z_{\ell,d}\left(\frac{\zeta_1\cdot\zeta_2}{\kappa}\right).
\end{align*}
For $\kappa=0$,
\begin{equation*}
    \hp{\cc{\mathcal{P}_{\ell,d}e_{r\zeta_1}}}{\mathcal{P}_{\ell,d}e_{r\zeta_2}}_{L^2(\Sp^{d-1})}=\frac{r^{2\ell}}{(\ell!)^2} \int_{\Sp^{d-1}}(y\cdot\zeta_1)^\ell(y\cdot \zeta_2)^\ell\d{y}.
\end{equation*}
Since $(y\cdot\zeta_1)^\ell\in \H_\ell^d$, we can use \eqref{eq:Analytic Extension 2} to obtain
\begin{align*}
       \int_{\Sp^{d-1}}(y\cdot\zeta_1)^\ell(y\cdot \zeta_2)^\ell\d{y}=\frac{2\pi^{d/2}\ell!}{\Gamma(\ell+\frac{d}{2})}\left(\frac{\zeta_1\cdot \zeta_2}{2}\right)^\ell,
\end{align*}
and the result follows.
\end{proof}

\bibliographystyle{myalpha}
\bibliography{Refs}

\end{document}